\DeclareMathAlphabet{\mathcal}{OMS}{zplm}{m}{n}
\algrenewcommand{\algorithmiccomment}[1]{\hfill \textcolor{blue}{$\vartriangleright$ \textit{#1}}}
\newcommand{\real}{\mathbb{R}}
\newcommand{\complex}{\mathbb{C}}
\DeclareMathOperator{\diag}{diag}
\newcommand{\mat}[1]{\boldsymbol{#1}}
\renewcommand{\vec}[1]{\boldsymbol{#1}}
\newcommand{\norm}[1]{\mleft\| #1 \mright\|}
\newcommand{\QR}{\textsf{QR}\xspace}
\newcommand{\SVD}{\textsf{SVD}\xspace}
\DeclareMathOperator{\cond}{cond}
\newcommand{\expmat}[1]{\begin{bmatrix} #1 \end{bmatrix}}
\newcommand{\twobytwo}[4]{\expmat{#1 & #2 \\ #3 & #4}}
\newcommand{\twobyone}[2]{\expmat{#1 \\ #2}}
\newcommand{\onebytwo}[2]{\expmat{#1 & #2}}
\newcommand{\Id}{\mathbf{I}}
\newcommand{\order}{\mathcal{O}}
\DeclareMathOperator*{\argmin}{argmin}
\newcommand{\set}[1]{\mathsf{#1}}
\newcommand{\e}{\mathrm{e}}
\renewcommand{\hat}[1]{\widehat{#1}}
\renewcommand{\tilde}[1]{\widetilde{#1}}
\newcommand{\actionbox}[1]{\begin{tcolorbox}[colback=white,colframe=black,width=\columnwidth,boxsep=5pt,arc=4pt]
  #1
\end{tcolorbox}}
\DeclareMathOperator{\err}{err}
\DeclareMathOperator{\BE}{BE}
\DeclareMathOperator{\fl}{fl}
\newcommand{\Rhat}{\smash{\mat{\hat{R}}}}
\newcommand{\yn}[1]{\errmessage{Use of \textbackslash{yn} is not allowed}}
\newcommand{\rr}[1]{#1}
\newcommand{\rrr}[2]{#2}
\newcommand{\rrrr}[1]{}
\renewcommand*{\backref}[1]{}
\renewcommand*{\backrefalt}[4]{%
  \ifcase #1 %
    (No citations.)
  \or
    (Cited on page #2.)
  \else
    (Cited on pages #2.)
  \fi
}
\def\th@plain{%
  \thm@notefont{}
  \itshape 
}
\def\th@definition{%
  \thm@notefont{}
  \normalfont 
}
\crefname{equation}{}{}
\crefname{section}{section}{sections}
\crefname{appendix}{appendix}{appendices}
\newcommand*{\email}[1]{\href{mailto:#1}{\nolinkurl{#1}} } 
\crefname{inftheorem}{Informal Theorem}{Informal Theorems}
\declaretheorem[name=Theorem,numberwithin=section]{theorem}
\declaretheorem[name=Informal Theorem,numberlike=theorem]{inftheorem}
\declaretheorem[name=Proposition,numberlike=theorem]{proposition}
\declaretheorem[name=Fact,numberlike=theorem]{fact}
\crefname{fact}{Fact}{Facts}
\Crefname{fact}{Fact}{Facts}
\declaretheorem[name=Lemma,numberlike=theorem]{lemma}
\declaretheorem[name=Corollary,numberlike=theorem]{corollary}
\theoremstyle{remark}
\theoremstyle{definition}
\declaretheorem[name=Definition,numberlike=theorem]{definition}
\newmdtheoremenv{question}{Question}
\numberwithin{equation}{section}
\title{Fast randomized least-squares solvers can be just as accurate and stable as classical direct solvers\thanks{ENE is supported by the U.S. Department of Energy, Office of Science, Office of Advanced Scientific Computing Research, Department of Energy Computational Science Graduate Fellowship under Award Number DE-SC0021110 and, under aegis of Joel Tropp, by NSF FRG 1952777, Office of Naval Research BRC Award N000142412223, the Carver Mead New Horizons Fund. YN is supported by EPSRC grants EP/Y010086/1 and EP/Y030990/1.}}
\author{Ethan N. Epperly\thanks{Department of Computing and Mathematical Sciences, California Institute of Technology, Pasadena, CA 91125 USA (\email{eepperly@caltech.edu}, \url{https://ethanepperly.com})} \and Maike Meier\thanks{Bernoulli Institute for Mathematics, Computer Science and Artificial Intelligence, University of Groningen, Groningen, 9700 AB, The Netherlands (\email{m.meier@rug.nl})} \and Yuji Nakatsukasa\thanks{Mathematical Institute, University of Oxford, Oxford, OX2 6GG UK (\email{nakatsukasa@maths.ox.ac.uk})}}
\date{\today}
\begin{document}

\maketitle

\begin{abstract}
    One of the greatest success stories of randomized algorithms for linear algebra has been the development of fast, randomized algorithms for highly overdetermined linear least-squares problems. However, none of the existing algorithms is backward stable, preventing them from being deployed as drop-in replacements for existing QR-based solvers. This paper introduces sketch-and-precondition with iterative refinement (SPIR) and FOSSILS, two \emph{provably} backward stable randomized least-squares solvers. SPIR and FOSSILS combine iterative refinement with a preconditioned iterative method applied to the normal equations and converge at the same rate as existing randomized least-squares solvers. This work offers the promise of incorporating randomized least-squares solvers into existing software libraries while maintaining the same level of accuracy and stability as classical solvers.
\end{abstract}

\section{Motivation}

In recent years, researchers in the field of \emph{randomized numerical linear algebra} (RNLA) have developed a suite of randomized methods for linear algebra problems such as linear least-squares \cite{Sar06,RT08,AMT10,PW16} and low-rank approximation \cite{DDH07,HMT11,TW23}.

We will study randomized algorithms for the overdetermined linear least-squares problem:
\begin{equation} \label{eq:ls}
    \vec{x} = \argmin_{\vec{x} \in \real^n} \norm{\vec{b} - \mat{A}\vec{x}} \quad \text{where } \mat{A}\in\real^{m\times n}, \: \vec{b} \in \real^m, \: m\gg n.
\end{equation}
Here, and throughout, $\norm{\cdot}$ denotes the Euclidean norm of a vector or the spectral norm of a matrix.
The classical method for solving \cref{eq:ls} requires a \QR decomposition of $\mat{A}$ at $\order(mn^2)$ cost.
For highly overdetermined problems, with $m\gg n$, RNLA researchers have developed methods for solving \cref{eq:ls} which require fewer than $\order(mn^2)$ operations. 
Indeed, in exact arithmetic, sketch-and-precondition methods \cite{RT08,AMT10} and iterative sketching methods \cite{PW16,OPA19,LP21,Epp24} both solve \cref{eq:ls} to high accuracy in roughly $\order(mn+n^3)$ operations, a large improvement over the classical method if $m\gg n\gg 1$ and much closer to the theoretical lower bound of $\Omega(mn)$.
Despite these seemingly attractive algorithms, programming environments such as MATLAB, numpy, and scipy still use classical \QR-based methods to solve \cref{eq:ls}, even for problems with $m\gg n\gg 1$.
This prompts us to ask:

\actionbox{\textbf{Motivating question:} Could a randomized solver be the default in a programming environment (e.g., MATLAB) for solving large highly overdetermined least-squares problems?}

Replacing an existing solver in MATLAB or numpy, each of which have millions of users, should only be done with assurances that the new solver is just as accurate and reliable as the previous method.
The classic Householder \QR algorithm is backward stable~\cite[Ch.~20]{Hig02}:
\begin{definition}[Backward \rrr{stability}{error and backward stability}] \label{def:backward}
    \rrrr{A numerically computed least-squares solution $\vec{\hat{x}}$ is \emph{backward stable} if it is the exact solution to a slightly perturbed problem}
    \rr{The backward error $\BE(\vec{\hat{x}})$ of a numerically computed solution $\vec{\hat{x}}$ to \cref{eq:ls} is the size of the minimum perturbation to $\mat{A}$ that makes $\vec{\hat{x}}$ the solution to the least-squares problem:}
    \rrrr{\begin{equation} \label{eq:backward_stable}
        \vec{\hat{x}} = \argmin_{\vec{\hat{x}} \in \real^n} \norm{(\vec{b} + \vec{\Delta b}) - (\mat{A} + \mat{\Delta A}) \vec{\hat{x}}} \quad \text{where } \norm{\mat{\Delta A}} \lesssim \norm{\mat{A}} u,\:\norm{\vec{\Delta b}} \lesssim  \norm{\vec{b}} u.
    \end{equation}}
    \begin{equation} \label{eq:backward_stable}
        \rr{\BE(\vec{\hat{x}}) = \min\left\{\norm{\mat{\Delta A}}_{\rm F}\,:\, \vec{\hat{x}} = \argmin_{\vec{\hat{x}\in \real^n}}\norm{\vec{b} - (\mat{A}+ \mat{\Delta A}) \vec{\hat{x}}}\right\}.}
    \end{equation}
    \rr{This solution is \emph{backward stable} if the backward error is bounded by a multiple of the machine precision $\BE(\vec{\hat{x}}) \le c\, \norm{\mat{A}}_{\rm F} u$, where the prefactor $c$ is a (small, low-degree) polynomial in $m$ and $n$.}
    \rrrr{A least-squares solver is backward stable if it always computes a backward stable solution.}
\end{definition}
\rr{Backward stability is the gold-standard guarantee of numerical stability for a least-squares algorithm.
If a method is backward stable, it will also be \emph{forward stable} in the sense of possessing a small \emph{forward error} $\norm{\vec{x} - \vec{\hat{x}}}$; see \cref{sec:analysis-preview} for discussion on different notions of stability.}
To replace \QR-based least-squares solvers in general-purpose software, we need a randomized least-squares solver that is also backward stable.

\begin{figure}[t]
  \centering
  \includegraphics[width=0.45\textwidth]{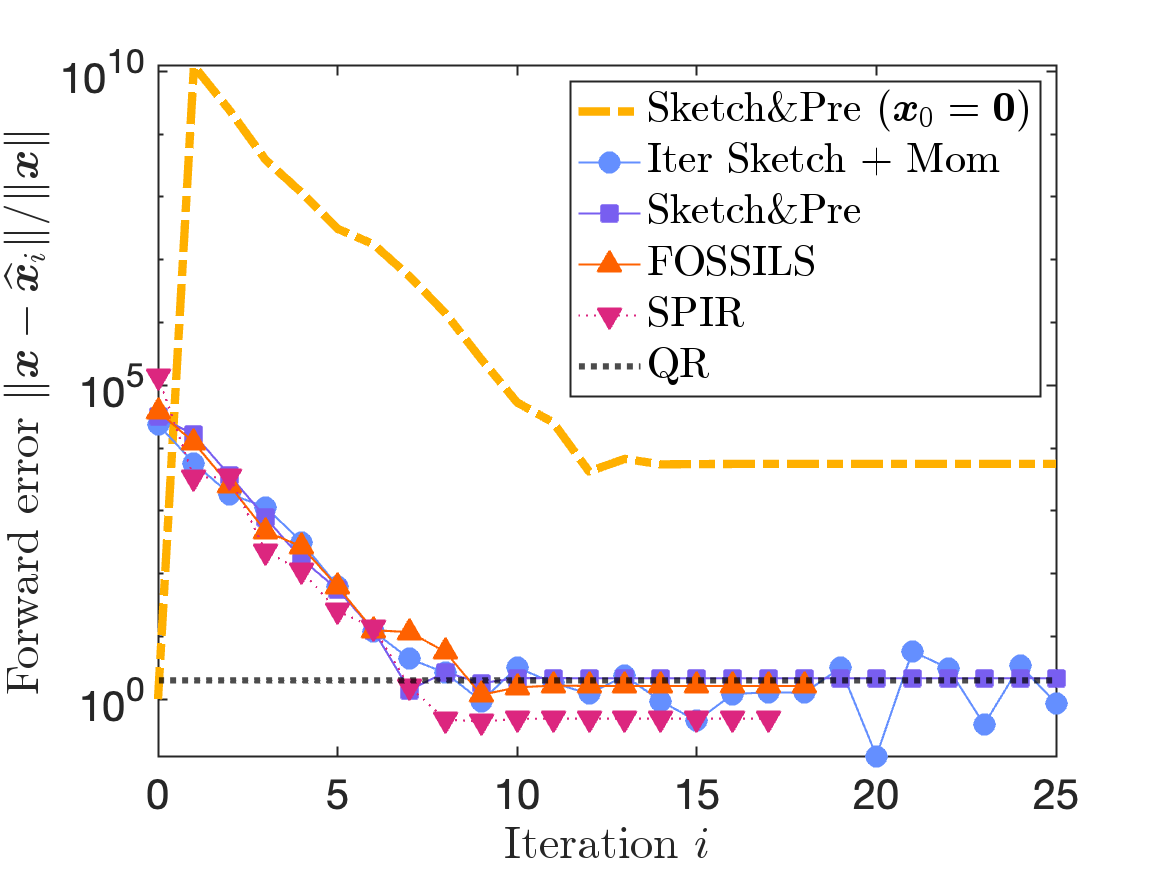}
  \includegraphics[width=0.45\textwidth]{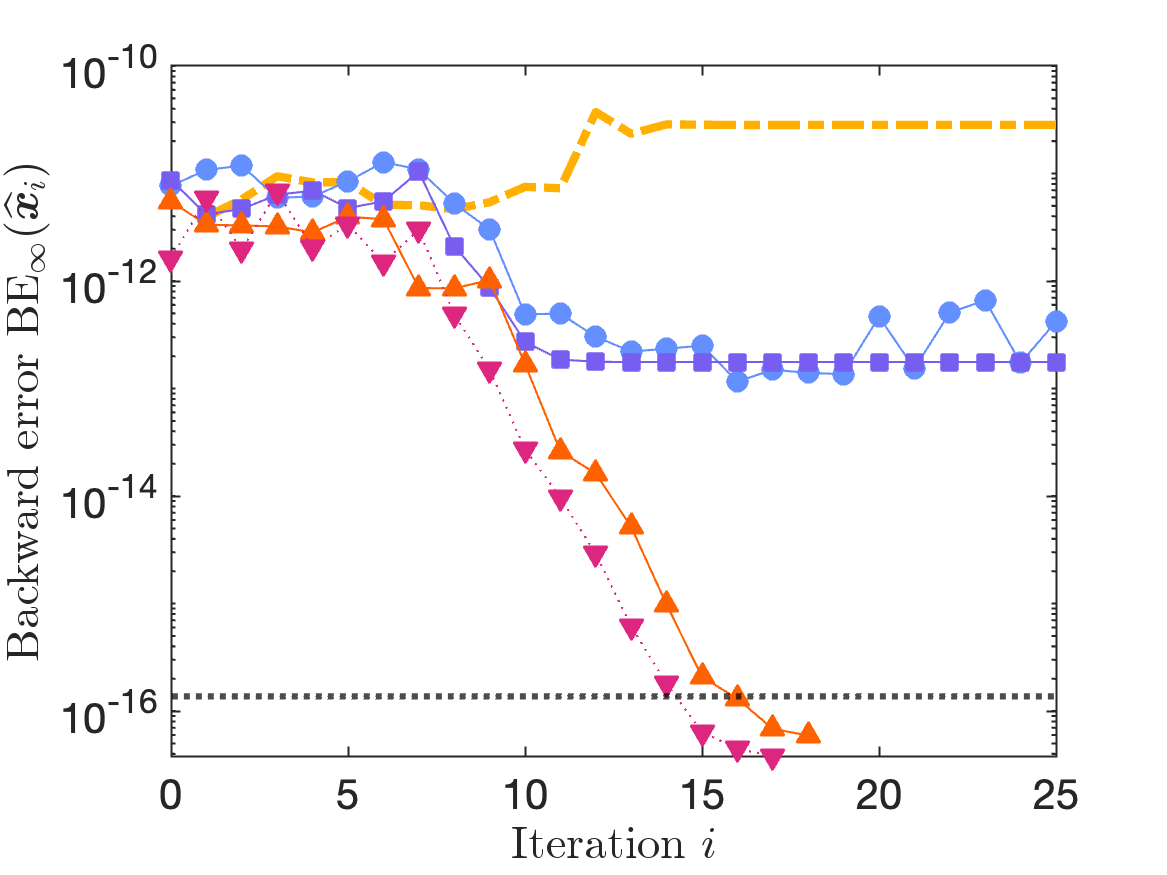} 
    
  \caption{Forward (\emph{left}) and backward (\emph{right}, \cref{eq:backward_error}) error for different randomized least-squares solvers.
  A reference accuracy for Householder \QR is shown as a dashed line.
  All of the randomized methods, except sketch-and-precondition with the zero initialization, are forward stable (achieving comparable forward error to Householder \QR), but only SPIR and FOSSILS are backward stable.} \label{fig:comparison}
\end{figure}

Sadly, existing fast, randomized least-squares methods are not backward stable \cite{MNTW24,Epp24}; see \cref{fig:comparison} for a demonstration.
We test two existing solvers: sketch-and-precondition (with both the zero and sketch-and-solve initialization) and iterative sketching with momentum; see \cref{sec:related} for description of these methods.
We test on a $4000\times 50$ random least-squares problem with residual norm $\norm{\vec{b} - \mat{A}\vec{x}} = 10^{-6}$ and condition number $\kappa \coloneqq \cond(\mat{A}) \coloneqq \sigma_{\rm max}(\mat{A})/\sigma_{\rm min}(\mat{A}) = 10^{12}$.
We generated these problems with a matrix $\mat{A}$ with logarithmically spaced singular values \rr{(i.e., $\sigma_i = 10^{p_i}$ where the $p_i$ are $n$ equally spaced points between 0 and $-\log_{10}\kappa$)} and Haar random singular vectors, and the residual $\vec{b} - \mat{A}\vec{x}$ is uniformly random with the specified norm; see \cite[sec.~1.1.1]{Epp24} for details.
All randomized methods use a sparse sign embedding with dimension of $d = 12n$ (see \cref{sec:implementation}).

Looking first at the left panel of \cref{fig:comparison}, we see all of these methods---except sketch-and-precondition with the zero initialization
---are empirically \emph{forward stable}, achieving roughly the same \emph{forward error} $\norm{\vec{x} - \vec{\hat{x}}}$ as the solution by \QR factorization.
However, \textbf{none of the existing fast, randomized methods is backward stable}.
This is demonstrated in the right panel of \cref{fig:comparison}, which shows that the \emph{backward error} of all existing randomized methods fails to match the backward error of \QR.
(The backward error is a measure of the minimal size of the perturbations $\mat{\Delta A},\mat{\Delta b}$ needed to satisfy \cref{eq:backward_stable}; see \cref{eq:backward_error} for a formal definition of the backward error.)

As existing randomized least-squares solvers are not backward stable, they are not appropriate as replacements for \QR-based solvers in general software, at least not for all least-squares problems.
This motivates us to address the following question:
\actionbox{\textbf{Research question:} Is there a randomized least-squares solver that is both \emph{fast}, running in $\tilde{\order}(mn + n^3)$ operations, and \emph{backward stable}?}
\noindent 
In this article, we answer this question in the affirmative by proposing two \textbf{provably backward stable} randomized least-squares solvers with a fast (nearly) $\order(mn + n^3)$ runtime: 
\begin{itemize}
    \item \textbf{FOSSILS} (\textbf{F}ast \textbf{O}ptimal \textbf{S}table \textbf{S}ketchy \textbf{I}terative \textbf{L}east \textbf{S}quares), a backward stable version of iterative sketching with momentum.
    \item \textbf{SPIR} (\textbf{S}ketch-and-\textbf{P}recondition with \textbf{I}terative \textbf{R}efinement), a backward stable version of sketch-and-precondition.
\end{itemize}

The fast convergence and stability of both SPIR and FOSSILS are shown in \cref{fig:comparison}.
In the left panel, we see that SPIR and FOSSILS converge to the forward error $\norm{\vec{x} - \vec{\hat{x}}}$ of the \QR method at the same rate are both sketch-and-precondition and iterative sketching with momentum.
In the right panel, we see that the backward errors of SPIR and FOSSILS continue to drop past the point where other methods stagnate, converging to the same backward error as the \QR method.

\subsection{\rr{Backward stability: Does it matter?}}

\rr{When implemented carefully, fast randomized least-squares solvers are already (strongly) forward stable (\cref{sec:analysis-preview}), and this stability guarantee is enough for many applications.
Why care about backward stability?
Here are several reasons: 
\begin{enumerate}
    \item \textbf{``As accurate as possible.''} The mere act of representing the problem data $\mat{A}$ and $\vec{b}$ using floating point numbers introduces perturbations on the order of $u$.
    Thus, being the exact solution to a slightly perturbed least-squares problem \cref{eq:backward_stable} is ``as accurate as one could possibly hope to solve a least-squares problem''.
    \item \textbf{Subroutine.}
    Linear algebraic primitives, such as least-squares solvers, are often used as subroutines in larger computations.
    The numerical stability of an end-to-end computation can depend on the backward stability of its subroutines.
    Indeed, even the \emph{forward stability} of randomized least-squares solvers such as iterative sketching depends crucially on the \emph{backward stability} of triangular solves.
    \item \textbf{Componentwise errors.}
    \Cref{thm:backward-componentwise} shows that 
    a method is backward stable if \emph{each component} of the solution $\vec{\hat{x}}$, in the basis of $\mat{A}$'s right singular vectors, is computed accurately.
    As \cref{eq:strong-forward-componentwise,eq:strong-forward-componentwise2} show, componentwise errors can be larger for a forward stable method.
    \item \textbf{Residual orthogonality.} 
    The characterizing geometric property of the least-squares solution is that the residual $\vec{b}-\mat{A}\vec{x}$ is orthogonal to the range of the matrix $\mat{A}$, i.e., $\mat{A}^\top (\vec{b} - \mat{A}\vec{x}) = \vec{0}$.
    The following corollary of \cref{thm:backward-componentwise} compares the size of $\mat{A}^\top (\vec{b} - \mat{A}\vec{\hat{x}})$ for both backward and forward stable methods:
    \begin{corollary}[Residual orthogonality]
        Let $\vec{\hat{x}}$ a backward stable solution to \cref{eq:ls}.
        Then
        \begin{equation*}
            \norm{\smash{\mat{A}^\top (\vec{b} - \mat{A}\vec{\hat{x}})}} \lesssim \norm{\mat{A}}(\norm{\vec{b}} + \norm{\mat{A}} \norm{\vec{\hat{x}}}) u.
        \end{equation*}
        For a strongly forward stable method, this quantity can be as large as
        \begin{equation*}
            \norm{\smash{\mat{A}^\top (\vec{b} - \mat{A}\vec{\hat{x}})}} \gtrsim \norm{\mat{A}}(\norm{\mat{A}} \norm{\vec{x}} + \cond(\mat{A})\norm{\vec{b}-\mat{A}\vec{x}}) u.
        \end{equation*}
    \end{corollary}
    This result shows that, for ill-conditioned ($\cond(\mat{A})\gg 1$) and highly inconsistent ($\norm{\vec{b}-\mat{A}\vec{x}} \gg u$) least-squares problems, strong forward stability is not enough to ensure that $\norm{\smash{\mat{A}^\top(\vec{b}-\mat{A}\vec{\hat{x}})}}$ is small.
    Backward stability, however, is enough.
    See \cref{tab:residual-orthogonality} for a demonstration.
\end{enumerate}}
\begin{table}[t]
    \centering
    \caption{\textbf{\textit{Residual orthogonality.}} Median of $\norm{\mat{A}^\top (\vec{b} - \mat{A}\vec{\hat{x}})}$ for $\vec{\hat{x}}$ computed by different algorithms for $100$ randomly generated least-squares problems.
    Random problems are generated using the same procedure as \cref{fig:comparison} with condition number $\kappa = 10^{12}$ and residual norm $\norm{\vec{b} - \mat{A}\vec{x}} = 10^{-3}$.}
    \begin{tabular}{llc}\toprule
    \textbf{Algorithm} & \textbf{Type of stability} & $\norm{\mat{A}^\top (\vec{b} - \mat{A}\vec{\hat{x}})}$ \\\midrule
    Sketch-and-precondition & Strong forward & 3.9e-9\\ 
    Iterative sketching with momentum & Strong forward & 1.5e-10 \\\midrule 
    Householder \QR & Backward & 5.2e-14 \\
    FOSSILS
    & Backward & 4.0e-14
    \\
    SPIR
    & Backward & 5.3e-14
    \\\bottomrule
    \end{tabular}
    \label{tab:residual-orthogonality}
\end{table}

Ultimately, our view is that---for well-established software packages like MATLAB, numpy, and scipy---the accuracy and stability of the existing solvers must be treated as a fixed target.
In order to use a randomized solver as a drop-in replacement for an existing direct solver, the randomized method must be as accurate and stable as the existing method.
Our hope is that, by developing SPIR and FOSSILS and proving their stability, randomized least-squares solvers may have advanced to a level of reliability that they could be serious candidates for incorporation into software packages such MATLAB, numpy, scipy, and (Rand)LAPACK.
\rr{Even if one is only interested in forward stability, the results of this paper are still of interest, as our paper proves forward stability for sketch-and-precondition (with the sketch-and-solve initialization); see \cref{infthm:spir}.}

\subsection{\rr{The role of randomness}}

\rr{Despite the success of randomization in linear algebra, there remain a lingering belief among some researchers that randomized methods are \emph{inherently} less accurate than deterministic ones.
The instabilities of existing least-squares solvers can be seen as consistent with this perspective.}

\rr{Let us push back on this viewpoint right from the start.
The instabilities identified in this paper are a general property of preconditioned iterative methods for least squares, and they manifest with deterministic preconditioning strategies as well.
Randomness enters our analysis in just two places: the construction of the preconditioner and the construction of the initialization.
The rest of our analysis is deterministic and guarantees backward stable least-squares solutions provided with any high-quality preconditioner and initialization, whether constructed deterministically or randomly (\cref{sec:generalizing}).}

\rr{With this said, let us also not minimize the role of randomness in the algorithms of this paper.
To obtain a backward stable solution in a single iterative refinement step, our analysis requires access to a near-perfect preconditioner (satisfying $\cond(\mat{A}\mat{R}^{-1}) \le \mathrm{const}$) and a near-perfect initialization (satisfying something like $\norm{\vec{b} - \mat{A}\vec{x}_0} \lessapprox \mathrm{const} \cdot \norm{\vec{b} - \mat{A}\vec{x}}$).
The randomized solvers considered in this work are the \emph{only} setting we are aware of where one has both of these ingredients.}

\subsection{Outline}

\Cref{sec:overview} provides an overview of the main ideas of this paper.
It begins by showing how to construct randomized least-squares solvers by combining randomized preconditioning and iterative refinement, introduces the SPIR and FOSSILS algorithms, and provides an overview of our backward stability analysis.
\Cref{sec:related} discusses related work, \cref{sec:FOSSILS} presents implementation guidance for SPIR and FOSSILS, and \cref{sec:experiments} provides numerical experiments.
We develop a general backward stability analysis for a class of randomized least-squares solvers in \cref{sec:analysis} and use this framework to analyze SPIR and FOSSILS in \cref{sec:FOSSILS-analysis,sec:SPIR-analysis}.

\subsection{Notation}

\rr{We adopt the standard model of floating point arithmetic \cite[sec.~2.2]{Hig02}, in which arithmetic operations $\star \in \{+,-,\times,/\}$ are performed up to relative error $u$: $\fl (a \star b)= (a\star b) (1+\delta)$ for $|\delta|\leq u$.}
The relation $\rr{a} \lesssim \rr{b}$ or $\rr{b}\gtrsim \rr{a}$ indicate that $\rr{a} \le \rr{c}(m,n) \rr{b}$ for a prefactor $\rr{c}(m,n)$ that is a polynomial function of $m$ and $n$.
\rr{We will write $a \ll b$ to indicate that $a \ll b/c(m,n)$ for some \emph{sufficiently large} polynomial function $c(m,n)$.}
We make no efforts to explicitly track the prefactors $\rr{c}(m,n)$ in our analysis, but we believe a careful analysis will reveal they are small and low-degree.
The symbol $u$ denotes the unit roundoff, roughly equal to $2\times 10^{-16}$ in double precision arithmetic.
The numerically computed version of a quantity $\vec{f}$ is denoted $\fl(\vec{f})$. 

The (spectral norm) condition number of a matrix $\mat{B}$ is $\cond(\mat{B}) \coloneqq \sigma_{\rm max}(\mat{B}) / \sigma_{\rm  min}(\mat{B})$.
The condition number of $\mat{A}$ is denoted $\kappa \coloneqq \cond(\mat{A})$.
The double lines $\norm{\cdot}$ indicate the $\ell_2$ norm of a vector or spectral norm of a matrix; $\norm{\cdot}_{\rm F}$ is the Frobenius norm.
Throughout this work, we refer to an algorithm for least-squares as ``fast'' if it runs in $\order((mn + n^3) \log m)$ operations or fewer.
We often assume $\norm{\mat{A}}=\norm{\vec{b}}=1$, which loses no generality but often helps simplify the discussion.

\section{Overview of algorithms and main results} \label{sec:overview}

Fast, randomized least-squares solvers were originally introduced by Rokhlin and Tygert in 2008 \cite{RT08}.
Over the past sixteen years, much additional research has been dedicated to developing randomized algorithms for least-squares problems \cite{AMT10,drineas2011faster,PW16,OPA19,LP21,CFS21,MNTW24,Epp24}.
Despite this substantial interest, a fast and backward stable method has remained elusive.
Given this challenge, it may be surprising that our proposed fast, backward stable least-squares solvers are based on a straightforward combination of two classical ingredients: randomized preconditioning and iterative refinement.
We review these ideas in \cref{sec:randomized-preconditioned,sec:iterative-refinement}.

\subsection{Ingredient 1: Randomized preconditioning} \label{sec:randomized-preconditioned}

The core ingredient to most randomized least-squares solvers is \emph{sketching}.

\begin{definition}[Sketching matrix]
    A (random) matrix $\mat{S} \in \real^{d\times m}$ is called a \emph{sketching matrix} or \emph{subspace embedding} for a matrix $\mat{A}$ with \emph{distortion} $0 < \eta < 1$ if
    \begin{equation} \label{eq:sketching-matrix}
        (1-\eta) \norm{\mat{A}\vec{y}} \le \norm{\mat{S}(\mat{A}\vec{y})} \le (1+\eta) \norm{\mat{A}\vec{y}} \quad \text{for all } \vec{y} \in \real^n.
    \end{equation}
    Most constructions for sketching matrices are probabilistic and satisfy \cref{eq:sketching-matrix} with high probability.
\end{definition}

Note that $d\geq n$ is required for~\cref{eq:sketching-matrix} to hold.
A sketching matrix $\mat{S}$ reduces vectors $\vec{z} \in \real^m$ to vectors $\mat{S}\vec{z} \in \real^d$ in a lower dimensional space $d \ll m$ while approximately preserving the lengths of all vectors of the form $\vec{z} = \mat{A}\vec{y}$.
There are many constructions for sketching matrices \cite[sec.~9]{MT20}; we recommend using sparse sign embeddings, which we discuss in \cref{sec:implementation}.

The core insight of Rokhlin and Tygert \cite{RT08} is that sketching matrices can be used to precondition (i.e., reduce the condition number) of a matrix $\mat{A}$ (see also \cite[Prop.~5.4]{KT24}):

\begin{fact}[Randomized preconditioning] \label{fact:whitening}
    Let $\mat{S}$ be an embedding of distortion $0 < \eta < 1$ for a full-rank matrix $\mat{A}$ and consider any factorization $\mat{S}\mat{A} = \mat{Q}\mat{R}$ for $\mat{Q}$ with orthonormal columns and $\mat{R}$ square. 
    Then
    \begin{align}
        \frac{1}{1+\eta} \le \sigma_{\rm min}(\mat{A}\mat{R}^{-1}) &\le \sigma_{\rm max}(\smash{\mat{A}\mat{R}^{-1}}) \le \frac{1}{1-\eta}. \label{eq:AR-whiten}
    \end{align}
    In particular, the condition number of $\mat{A}\mat{R}^{-1}$ is bounded $\cond(\mat{A}\mat{R}^{-1}) \le (1+\eta)/(1-\eta)$.
\end{fact}

To use this result, we begin by sketching $\mat{A}$ and \QR factorizing
\begin{equation} \label{eq:sketch_qr}
    \mat{S}\mat{A} = \mat{Q}\mat{R}.
\end{equation}
Then, we make the change of variables $\vec{y} = \mat{R}\vec{x}$, leading to the equation:
\begin{equation}\label{eq:ls-precond}
    \vec{y} = \argmin_{\vec{y} \in \real^n} \norm{\vec{b} - (\mat{A}\mat{R}^{-1})\vec{y}}.
\end{equation}
To obtain a fast least-squares solver, we can solve \cref{eq:ls-precond} using an iterative method, which we should converge rapidly since $\mat{A} \mat{R}^{-1}$ is well-conditioned.
We then recover $\vec{x} = \mat{R}^{-1}\vec{y}$ by a triangular solve.
In many settings, the preferred iterative algorithms for a least squares problem include Krylov subspace methods LSQR~\cite{PS82a} and LSMR~\cite{FM11}.

In this work, we shall adopt a different perspective than many previous works on randomized least-squares in that we shall solve \cref{eq:ls-precond} by passing to the \emph{preconditioned normal equations}:
\begin{equation} \label{eq:precond-normal}
    (\mat{R}^{-\top} \mat{A}^\top \mat{A} \mat{R}^{-1}) \vec{y} = \mat{R}^{-\top} (\mat{A}^\top \vec{b}).
\end{equation}
Working directly with the normal equations is often ill-advised, but is safe in this context since randomized preconditioning has ensured that $\mat{A}\mat{R}^{-1}$ is well-conditioned.
Iterative methods for square positive definite systems such as \cref{eq:precond-normal} include Krylov subspace methods conjugate gradient (CG)~\cite{HS52} and MINRES~\cite{PS75} and (accelerated) gradient methods such as the Polyak heavy ball method~\cite{Pol64}.
Note that conjugate gradient and MINRES on the normal equations are mathematically equivalent to LSQR and LSMR, respectively.

\subsection{Ingredient 2: Iterative refinement} \label{sec:iterative-refinement}

Existing methods based on randomized preconditioning are forward, but not backward, stable.
In particular, if~\cref{eq:precond-normal} is solved with, for instance, conjugate gradient, the iterations \rr{can} stagnate at a solution that is not backward stable \rr{(as was empirically observed in~\cite{MNTW24,Epp24})}.
An additional ingredient is needed to obtain a backward stable scheme.

The classical approach to improving the quality of a computed solution in numerical linear algebra is \emph{iterative refinement} (see \cite{Wil63} and \cite[ch.~12 and sec.~20.5]{Hig02} for a discussion of iterative refinement in general and \cite{golub1966note,carson2024comparison} for iterative refinement for least-squares in, particular).
Starting from a low-accuracy solution $\vec{x}_0$, iterative refinement for least-squares proceeds as follows: For $i = 0,1,\ldots$,
\begin{enumerate}
    \item Compute the residual $\vec{r}_i \gets \vec{b}-\mat{A}\vec{x}_i$.
    \item Solve for the error $\vec{\delta x}_i \coloneqq \vec{x} - \vec{x}_i$ by solving the least-squares problem 
    \begin{equation} \label{eq:ls-itref}
        \vec{\delta x}_i = \argmin_{\vec{\delta x} \in \real^n} \norm{\vec{r}_i - \mat{A}\vec{\delta x}}.
    \end{equation}
    \item Update the solution $\vec{x}_{i+1} \gets \vec{x}_i + \vec{\delta x}_i$.
\end{enumerate}
The idea of iterative refinement is simple, to improve the quality of an approximate solution $\vec{x}_i$, solve for the error $\vec{\delta x}_i = \vec{x} - \vec{x}_i$ and update $\vec{x}_{i+1} \gets \vec{x}_i + \vec{\delta x}_i$.
Were the system in step 2 to be solved exactly, $\vec{x}_{i+1}$ would be the exact solution to the least-squares problem \cref{eq:ls}.

Classically, the inexact solve in step 2 of iterative refinement is performed using a classical direct solver such as \QR factorization computed in lower numerical precision (i.e., single precision), and the residual and update steps 1 and 3 are performed in higher precision (e.g., double precision).
This \emph{mixed precision iterative refinement} for least-squares was studied by Golub and Wilkinson \cite{golub1966note}, who came to the disappointing conclusion that this approach is only effective for nearly consistent least-squares problems (i.e., problems for which $\norm{\vec{b} - \mat{A}\vec{x}}$ is small).

Our setting is different than the classical mixed precision setting: All steps 1--3 are performed in the same precision, but the solves in step 2 are solved iteratively through the randomly preconditioned normal equations:
\begin{equation}\label{eq:precond-normal-RHSc}
    \vec{c}_i = \mat{R}^{-\top} (\mat{A}^\top \vec{r}_i), \quad (\mat{R}^{-\top} \mat{A}^\top \mat{A} \mat{R}^{-1}) \, \vec{\delta y}_i = \vec{c}_i, \quad \vec{\delta x}_i = \mat{R}^{-1}\,\vec{\delta y}_i.
\end{equation}
As we will see, iterative refinement will be much more effective in this setting than the classical mixed precision setting studied by Golub and Wilkinson.

\subsection{Algorithms} \label{sec:algs}

To obtain backward stable randomized least-squares solvers, we combine randomized preconditioning for the normal equations together with iterative refinement.
We begin by drawing a sketching matrix $\mat{S}$ and sketching and \QR factorizing as in \cref{eq:sketch_qr}.
Then, we construct a cheap ``sketch-and-solve'' initial guess for the least-squares solution 
\begin{equation} \label{eq:sketch-and-solve-2.3}
    \vec{x}_0 \coloneqq \argmin_{\vec{x}_0 \in \real^n} \norm{\mat{S}\vec{b} - (\mat{S}\mat{A}) \vec{x}_0} = \mat{R}^{-1} (\mat{Q}^\top(\mat{S}\vec{b})).
\end{equation}
Sketch-and-solve was proposed by Sarl\'os \cite{Sar06} and suggested as an initialization for iterative randomized least-squares solvers by Rokhlin and Tygert \cite{RT08}.
Then, we use solve the preconditioned normal equations \cref{eq:precond-normal-RHSc} to perform a step of iterative refinement:
\begin{enumerate}
    \item Compute $\vec{r}_0 \gets \vec{b} - \mat{A}\vec{x}_0$.
    \item Solve $(\mat{R}^{-\top} \mat{A}^\top \mat{A} \mat{R}^{-1})\, \vec{\delta y}_0 = \mat{R}^{-\top} (\mat{A}^\top \vec{r}_0)$ using an iterative method.
    \item Update $\vec{x}_1 \gets \vec{x}_0 + \vec{\delta x}_0$ for $\vec{\delta x}_0 \gets \mat{R}^{-1} \, \vec{\delta y}_0$.
\end{enumerate}
We will show that, under minimal stability assumptions on the iterative solver used, the result $\vec{x}_1$ of this single iterative refinement step is a forward stable solution to \cref{eq:ls}.
To obtain a backward stable solution, we perform a \emph{single} additional step of iterative refinement, as above, obtaining a backward stable solution $\vec{x}_2$.
We refer to this procedure, shown in \cref{alg:meta-algorithm}, as the meta-algorithm for fast, backward stable least-squares.

\begin{algorithm}[ht!]
	\caption{General meta-algorithm for fast, backward stable randomized least-squares} \label{alg:meta-algorithm}
	\begin{algorithmic}[1]
		\Require Matrix $\mat{A}\in\real^{m\times n}$, vector $\vec{b}\in\real^m$, subroutine  $\Call{IterativeSolver}{}$
		\Ensure Least-squares solution $\vec{x}_2\in\real^n$
        \State Draw a sketching matrix $\mat{S}$ for $\onebytwo{\mat{A}}{\vec{b}}$
        \State $(\mat{Q},\mat{R}) \gets \Call{QR}{\mat{S}\mat{A}}$ \Comment{Sketch and \QR factorize}
        \State $\vec{x}_0 \gets \mat{R}^{-1}(\mat{Q}^\top (\mat{S}\vec{b}))$ \Comment{Sketch-and-solve initialization}
        \For{$i=0,1$} \Comment{Two steps of iterative refinement with iterative solver}
        \State $\vec{c}_i \gets \mat{R}^{-\top}(\mat{A}^\top(\vec{b} - \mat{A}\vec{x}_i))$
        \State $\vec{\delta y}_{i} \gets \Call{IterativeSolver}{\textsc{Apply} = \vec{z} \mapsto \mat{R}^{-\top}(\mat{A}^\top(\mat{A}(\mat{R}^{-1}\vec{z})),\textsc{RightHandSide} = \vec{c}_i}$ 
        \State $\vec{x}_{i+1} \gets \vec{x}_i + \mat{R}^{-1}\vec{\delta y}_{i}$ \Comment{Iterative refinement}
        \EndFor
	\end{algorithmic}
\end{algorithm}

By choosing a different iterative solver routine \Call{IterativeSolver}{} 
(which solves a linear system $\mat{Mx}=\vec{c}$, where a routine to compute matrix-vector product $\vec{z}\mapsto \mat{M}\vec{z}$ is given)
for step 2 of iterative refinement, we obtain different backward stable randomized least-squares solvers.
In the interest of concreteness, we highlight two particular algorithms corresponding to specific choices for this \Call{IterativeSolver}{}: FOSSILS and sketch-and-precondition with iterative refinement (SPIR).

\textbf{FOSSILS} refers to \cref{alg:meta-algorithm} with the Polyak heavy ball method (\cite{Pol64}, \cref{alg:polyak}) for the \Call{IterativeSolver}{} routine, leading to the following iteration to solve \cref{eq:precond-normal-RHSc}: 
\begin{equation} \label{eq:polyak_iteration}
    \vec{\delta y}_{(j+1)} = \vec{\delta y}_{(j)} + \alpha (\vec{c}_i - \mat{R}^{-\top}(\mat{A}^\top (\mat{A}(\mat{R}^{-1}\,\vec{\delta y}_{(j)})))) + \beta(\vec{\delta y}_{(j)} - \vec{\delta y}_{(j-1)}) \quad j = 1,2,\ldots.
\end{equation}
\rr{When implemented with a sketching matrix with distortion $\eta$,} the optimal value of $\alpha,\beta$ are known \cite{OPA19,LP21}:
\begin{equation} \label{eq:optimal-coeffs}
    \alpha = (1-\eta^2)^2, \quad \beta = \eta^2,
\end{equation}
The notation $\vec{\delta y}_{(j)}$ is reserved for inner iterations with the iterative solver, whereas $\vec{\delta y}_{0}$ and $\vec{\delta y}_{1}$ are the updates in the outer refinement steps.
\rr{We note that determination of the (optimal) coefficients \cref{eq:optimal-coeffs} requires a numerical value of $\eta$ for the particular embedding, which is a limitation of the FOSSILS method.
We use the estimates $\eta\approx \sqrt{n/d}$ or, more conservatively, $\eta\approx 1.1\sqrt{n/d}$ in this work based on the recommendations from \cite[sec.~9.7]{MT20}
See also the discussion in \cref{sec:implementation}.}

\begin{algorithm}[ht!]
	\caption{FOSSILS: Un-optimized implementation \hfill \textcolor{red}{[\textit{In practice, we recommend \cref{alg:FOSSILS-recommended}}]}} \label{alg:FOSSILS-basic}
	\begin{algorithmic}[1]
		\Require Matrix $\mat{A}\in\real^{m\times n}$, right-hand side $\vec{b}\in\real^m$, steps-per-refinement $q > 0$ \Comment{$q \approx \log(1/u)$}
		\Ensure Backward stable least-squares solution $\vec{x}_2\in\real^n$
        \State Draw an embedding $\mat{S}$ for $\operatorname{range}(\mat{A})$ of distortion $\eta$
        \State $\beta \gets \eta^2$, $\alpha \gets (1-\beta)^2$
        \State $(\mat{Q},\mat{R})\gets \Call{QR}{\mat{S}\mat{A}}$ \Comment{Sketch and \QR factorize}
        \State $\vec{x}_0\gets \mat{R}^{-1}(\mat{Q}^\top (\mat{S}\vec{b}))$ \Comment{Sketch-and-solve initialization}
        \For{$i=0,1$} \Comment{Two steps of iterative refinement with Polyak method on normal equations}
        \State $\vec{c}_i \gets \mat{R}^{-\top}(\mat{A}^\top(\vec{b} - \mat{A}\vec{x}_i))$
        \State $\vec{\delta y}_{i} \gets \Call{Polyak}{\mat{A},\mat{R},\vec{c}_i,\alpha,\beta,q}$ \Comment{See \cref{alg:polyak}}
        \State $\vec{x}_{i+1} \gets \vec{x}_i + \mat{R}^{-1}\,\vec{\delta y}_{i}$ \Comment{Iterative refinement}
        \EndFor
	\end{algorithmic}
\end{algorithm}
    
\textbf{Sketch-and-precondition with iterative refinement (SPIR)} refers to \cref{alg:meta-algorithm} with conjugate gradient as the \Call{IterativeSolver}{} routine.
As the name suggests, SPIR consists of Rokhlin and Tygert's sketch-and-precondition method \cite{RT08} run twice, with the first run use to initialize the second.
One can also implement SPIR using LSQR applied to the preconditioned least-squares problem \cref{eq:ls-precond} instead of conjugate gradient applied to \cref{eq:precond-normal}.
Both variants seem similarly stable in practice, though SPIR tends to be more robust; see \cref{sec:fossils-vs-spir}.

\begin{algorithm}[ht!]
	\caption{SPIR: Un-optimized implementation \hfill \textcolor{red}{[\textit{For practical implementation, see \cref{sec:implementation}}]}} \label{alg:SPIR-basic}
	\begin{algorithmic}[1]
		\Require Matrix $\mat{A}\in\real^{m\times n}$, right-hand side $\vec{b}\in\real^m$
		\Ensure Backward stable least-squares solution $\vec{x}_2\in\real^n$
        \State Draw an embedding $\mat{S}$ for $\operatorname{range}(\mat{A})$ of distortion $\eta$
        \State $(\mat{Q},\mat{R})\gets \Call{QR}{\mat{S}\mat{A}}$ \Comment{Sketch and \QR factorize}
        \State $\vec{x}_0\gets \mat{R}^{-1}(\mat{Q}^\top (\mat{S}\vec{b}))$ \Comment{Sketch-and-solve initialization}
        \For{$i=0,1$} \Comment{Two iterative refinement steps with CG on preconditioned normal equations}
        \State $\vec{c}_i \gets \mat{R}^{-\top}(\mat{A}^\top(\vec{b} - \mat{A}\vec{x}_i))$
        \State $\vec{\delta y}_{i} \gets \Call{ConjugateGradient}{\textsc{Apply} = \vec{z} \mapsto \mat{R}^{-\top}(\mat{A}^\top(\mat{A}(\mat{R}^{-1}\vec{z})),\textsc{RightHandSide} = \vec{c}}$ 
        \State $\vec{x}_{i+1} \gets \vec{x}_i + \mat{R}^{-1}\,\vec{\delta y}_{i}$ \Comment{Iterative refinement}
        \EndFor
	\end{algorithmic}
\end{algorithm}

Both SPIR and FOSSILS converge at the same (optimal \cite{LP20}) rate, and they are both fast and reliable in practice, with SPIR being somewhat more robust.
When implemented using a sparse sign embedding (see \cref{sec:implementation}) with conservative parameter settings \cref{eq:cohen}, both of these algorithms require $\order(mn \log(n/u) + n^3 \log n)$ operations.
We recommend either for deployment in practice, with the preference for SPIR for most use cases; see \cref{sec:fossils-vs-spir} for discussion.

\subsection{Stability analysis} \label{sec:analysis-preview}

To present our main results, we begin by formally defining forward stability.
(Recall that backward stability was defined above in \cref{def:backward}.)
To do so, we begin with Wedin's perturbation theorem \cite{Wed73}.
Here is a simplified version:

\begin{fact}[Least-squares perturbation theory] \label{fact:wedin}
    Consider a perturbed least-squares problem \cref{eq:ls}:
    \begin{equation*}
        \vec{\hat{x}} = \argmin_{\vec{\hat{x}} \in \real^n} \norm{(\vec{b} + \vec{\Delta b}) - (\mat{A}+\mat{\Delta A}) \vec{\hat{x}} } \quad \text{with } \norm{\mat{\Delta A}} \le \norm{\mat{A}} \varepsilon,\:\norm{\vec{\Delta b}} \le  \norm{\vec{b}} \varepsilon.
    \end{equation*}
    Then, provided $\cond(\mat{A})\varepsilon \le 0.1$, the following bounds hold:
    \begin{align}
        \norm{\vec{x} - \vec{\hat{x}}} &\le 2.23 \, 
        \cond(\mat{A}) 
        \left(  \norm{\vec{x}} + \frac{\cond(\mat{A})}{\norm{\mat{A}}} \norm{\vec{b} - \mat{A}\vec{x}} \right) \varepsilon, \label{eq:forward_error} \\
        \norm{\mat{A}(\vec{x} - \vec{\hat{x}})} &\le 2.23 \left(  \norm{\mat{A}} \norm{\vec{x}} + \cond(\mat{A}) \norm{\vec{b} - \mat{A}\vec{x}} \right) \varepsilon.\label{eq:residual_error} 
    \end{align}
\end{fact}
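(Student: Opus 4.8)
The plan is to reduce both the original and perturbed least-squares solutions to closed form via the normal equations and then bound their difference term by term. First, I would record the basic facts that $\mat{A}$ has full column rank (else $\cond(\mat{A}) = \infty$) and that, by the perturbation bound for singular values, $\sigma_{\min}(\mat{A}+\mat{\Delta A}) \ge \sigma_{\min}(\mat{A}) - \norm{\mat{\Delta A}} \ge (1-\cond(\mat{A})\varepsilon)\,\sigma_{\min}(\mat{A}) \ge 0.9\,\sigma_{\min}(\mat{A}) > 0$, so $\mat{\hat{A}} \coloneqq \mat{A}+\mat{\Delta A}$ also has full column rank. Hence $\vec{x} = \mat{A}^+\vec{b}$ and $\vec{\hat{x}} = \mat{\hat{A}}^+\vec{\hat{b}}$ with $\vec{\hat{b}} = \vec{b}+\vec{\Delta b}$, and throughout the argument we may use $\norm{\mat{\hat{A}}^+} \le \tfrac{1}{0.9}\norm{\mat{A}^+}$ together with $\norm{(\mat{\hat{A}}^\top\mat{\hat{A}})^{-1}} = \norm{\mat{\hat{A}}^+}^2 \le \tfrac{1}{0.81}\norm{\mat{A}^+}^2$.

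The next step is to subtract the two normal equations, $\mat{\hat{A}}^\top\mat{\hat{A}}\,\vec{\hat{x}} - \mat{A}^\top\mat{A}\,\vec{x} = \mat{\hat{A}}^\top\vec{\hat{b}} - \mat{A}^\top\vec{b}$, and rearrange. Abbreviating $\mat{E} \coloneqq \mat{\Delta A}$ and using the identities $\mat{\hat{A}}^\top\mat{\hat{A}} - \mat{A}^\top\mat{A} = \mat{A}^\top\mat{E} + \mat{E}^\top\mat{\hat{A}}$ and $\mat{\hat{A}}^\top\vec{\hat{b}} - \mat{A}^\top\vec{b} = \mat{A}^\top\vec{\Delta b} + \mat{E}^\top\vec{\hat{b}}$, a short calculation in which the cross terms telescope yields the key formula
\begin{equation*}
    \vec{\hat{x}} - \vec{x} = \mat{\hat{A}}^+(\vec{\Delta b} - \mat{E}\vec{x}) + (\mat{\hat{A}}^\top\mat{\hat{A}})^{-1}\mat{E}^\top\vec{r}, \qquad \vec{r} \coloneqq \vec{b} - \mat{A}\vec{x}.
\end{equation*}
(This is Wedin's pseudoinverse perturbation identity specialized to the full-rank case; deriving it directly keeps the argument self-contained. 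The structurally important feature is that the residual $\vec{r}$, which may be large, enters only through $\mat{E}^\top\vec{r}$, and this term carries the extra factor $\norm{(\mat{\hat{A}}^\top\mat{\hat{A}})^{-1}} \asymp \norm{\mat{A}^+}^2$ responsible for the $\cond(\mat{A})^2$ in \cref{eq:forward_error}.)

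To obtain \cref{eq:forward_error}, I would take norms in the key formula, insert $\norm{\mat{E}} \le \norm{\mat{A}}\varepsilon$ and $\norm{\vec{\Delta b}} \le \norm{\vec{b}}\varepsilon \le (\norm{\mat{A}}\norm{\vec{x}} + \norm{\vec{r}})\varepsilon$, apply the pseudoinverse bounds above, and collect terms using $\norm{\mat{A}^+}\norm{\mat{A}} = \cond(\mat{A})$ and $\cond(\mat{A}) \ge 1$. To obtain \cref{eq:residual_error}, I would left-multiply the key formula by $\mat{A}$; the point is that $\mat{A}\mat{\hat{A}}^+ = \mat{\hat{A}}\mat{\hat{A}}^+ - \mat{E}\mat{\hat{A}}^+$ is a small perturbation of an orthogonal projector, so $\norm{\mat{A}\mat{\hat{A}}^+} \le 1 + \norm{\mat{E}}\norm{\mat{\hat{A}}^+} = 1 + \order(\cond(\mat{A})\varepsilon) = \order(1)$, and likewise $\norm{\mat{A}(\mat{\hat{A}}^\top\mat{\hat{A}})^{-1}} \le \norm{\mat{A}\mat{\hat{A}}^+}\,\norm{\mat{\hat{A}}^+} = \order(\norm{\mat{A}^+})$; inserting these together with the bounds on $\mat{E}$ and $\vec{\Delta b}$ gives \cref{eq:residual_error}.

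The main obstacle is careful bookkeeping of the numerical constants: one must propagate the hypothesis $\cond(\mat{A})\varepsilon \le 0.1$ through every replacement of an $\mat{\hat{A}}$-quantity by the corresponding $\mat{A}$-quantity so that all accumulated prefactors stay below the stated value. The one genuinely delicate point is the residual bound \cref{eq:residual_error}: one must resist bounding $\norm{\mat{A}(\vec{x}-\vec{\hat{x}})} \le \norm{\mat{A}}\,\norm{\vec{x}-\vec{\hat{x}}}$ and plugging in \cref{eq:forward_error}, which would introduce a spurious second factor of $\cond(\mat{A})$ in front of $\norm{\mat{A}}\norm{\vec{x}}$; the near-orthogonality of $\mat{A}\mat{\hat{A}}^+$ must be exploited instead.
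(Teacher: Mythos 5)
The paper does not prove this statement; it presents it as a \emph{Fact}, attributing it to Wedin~\cite{Wed73} and describing it as ``a simplified version'' of his perturbation theorem, so there is no in-paper proof to compare against. Your proposal is the standard and correct route. The key pseudoinverse-perturbation identity you derive,
\begin{equation*}
    \vec{\hat{x}} - \vec{x} = \mat{\hat{A}}^+(\vec{\Delta b} - \mat{\Delta A}\,\vec{x}) + (\mat{\hat{A}}^\top\mat{\hat{A}})^{-1}\mat{\Delta A}^\top(\vec{b} - \mat{A}\vec{x}),
\end{equation*}
is indeed what the normal-equations subtraction produces, and your treatment of the residual bound is the right one: left-multiplying the identity by $\mat{A}$ and using $\mat{A}\mat{\hat{A}}^+ = \mat{\hat{A}}\mat{\hat{A}}^+ - \mat{\Delta A}\,\mat{\hat{A}}^+$ (so that $\norm{\mat{A}\mat{\hat{A}}^+} \le 1 + O(\cond(\mat{A})\varepsilon)$) is exactly what avoids the spurious extra $\cond(\mat{A})$ in front of $\norm{\mat{A}}\norm{\vec{x}}$ that one would get from the crude step $\norm{\mat{A}(\vec{x}-\vec{\hat{x}})} \le \norm{\mat{A}}\norm{\vec{x}-\vec{\hat{x}}}$.

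The one point to be aware of, which you flag yourself, is the exact prefactor. With the substitutions you describe ($\norm{\mat{\hat{A}}^+} \le \norm{\mat{A}^+}/0.9$, $\norm{(\mat{\hat{A}}^\top\mat{\hat{A}})^{-1}} \le \norm{\mat{A}^+}^2/0.81$, $\norm{\vec{b}} \le \norm{\mat{A}}\norm{\vec{x}} + \norm{\vec{r}}$, and $\cond(\mat{A}) \ge 1$ to merge the residual contributions), the $\norm{\vec{x}}$-term comes out as $\tfrac{2}{0.9}\cond(\mat{A})\norm{\vec{x}}\varepsilon \approx 2.22\,\cond(\mat{A})\norm{\vec{x}}\varepsilon$, which is fine, but the merged residual coefficient is $\tfrac{1}{0.9} + \tfrac{1}{0.81} \approx 2.35$, slightly exceeding the stated $2.23$. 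This is not a structural gap---the decomposition and the dominant powers of $\cond(\mat{A})$ are all correct---but to land exactly on $2.23$ you would need either a slightly tighter handling of the $\mat{\hat{A}}^+\vec{\Delta b}$ term (e.g.\ exploiting the Pythagorean relation $\norm{\vec{b}}^2 = \norm{\mat{A}\vec{x}}^2 + \norm{\vec{r}}^2$ rather than the triangle inequality, and absorbing constants case-by-case) or simply to cite Wedin's theorem as the paper does and accept the constant it supplies.
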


\begin{definition}[Forward stability]
    A least-squares solver is \emph{forward stable} if the computed solution $\vec{\hat{x}}$ satisfies the bound \cref{eq:forward_error} for $\varepsilon \lesssim u$.
    If it also satisfies \cref{eq:residual_error} for $\varepsilon \lesssim u$, then we say the method is \emph{strongly forward stable}.
\end{definition}

Put simply, a least-squares solver is forward stable if the forward error $\norm{\vec{x} - \vec{\hat{x}}}$ is comparable to that achieved by a backward stable method.
Backward stability is a strictly stronger property than forward stability; backward stability implies (strong) forward stability, but not the other way around.
For more on stability for least-squares problems, standard references are \cite{Hig02,Bjo96}.

We now provide an overview of our main stability results.
Formal statements and proofs are deferred to \cref{sec:analysis,sec:FOSSILS-analysis,sec:SPIR-analysis}.
Our general stability result concerns the meta-algorithm \cref{alg:meta-algorithm}:

\begin{inftheorem}[Stability of meta-algorithm] \label{infthm:meta-stability}
    Assume the matrix $\mat{A}$ is numerically full-rank $\kappa u \ll 1$ and assume that the \Call{IterativeSolver}{} subroutine satisfies the stability guarantee
    \begin{equation} \label{eq:iterative-solver-stability}
        \norm{ \fl(\Call{IterativeSolver}{\vec{c}}) - (\Rhat^{-\top}\mat{A}^\top\mat{A}\Rhat^{-1})^{-1}\vec{c} } \lesssim \kappa u \cdot \norm{\vec{c}},
    \end{equation}
    where $\mat{\hat{R}} = \fl(\mat{R})$ denotes the numerically computed preconditioner.
    Then $\vec{x}_1$ computed by \cref{alg:meta-algorithm} is strongly forward stable, and the final output $\vec{x}_2$ is backward stable.
\end{inftheorem}

A formal statement of this result is provided in \cref{thm:meta-stability}, and a generalization to general, possibly non-randomized preconditioners is given in \cref{sec:generalizing}.
We anticipate our analysis could have implications for stability of other least-squares algorithms.

On its face, our result \cref{infthm:meta-stability} might seem to contradict Golub and Wilkinson's conclusion \cite{golub1966note} that iterative refinement is not generally effective for least-squares problems.
Our qualitative explanation is that SPIR and FOSSILS come with error that is \emph{structured}; in particular, the computation of $\mat{A}^\top(\vec{b} - \mat{A}\vec{x}_i)$ removes the components of the residual $\vec{b} - \mat{A}\vec{x}_i$ orthogonal to the column space of $\mat{A}$ to working accuracy.
This assumption is not satisfied in Golub and Wilkinson's setting.
We note that iterative refinement for least-squares has been recently revisited~\cite{carson2024comparison}.

We now turn to our two concrete proposals for fast, backward stable randomized least-squares solvers: SPIR and FOSSILS.
In light of \cref{infthm:meta-stability}, analyzing FOSSILS just requires proving \cref{eq:iterative-solver-stability} for the Polyak method, which we do in \cref{lem:polyak}.
Thus, we obtain:

\begin{inftheorem}[Backward stability of FOSSILS] \label{infthm:fossils}
    If the matrix $\mat{A}$ is numerically full-rank so that $\kappa u \ll 1$ and appropriate parameter choices are made, FOSSILS is backward stable and runs in $\order(mn \log (n/u) + n^3 \log n)$ operations or fewer.
\end{inftheorem}

Formally speaking, the numerical stability of different SPIR variants must be separately established for each different Krylov method (e.g., conjugate gradient, LSQR, etc.) used for iterative refinement.
Given that these variants are similarly stable in practice, we will only analyze SPIR for one choice of Krylov method, the Lanczos method for matrix inversion (see \cref{alg:lanczos}), which is equivalent to both conjugate gradient and LSQR in exact arithmetic \cite[p.~44]{Gre97a}.
Our choice of Lanczos-based SPIR as the target for analysis is pragmatic, as it allows us to employ existing analysis of the Lanczos method in floating point arithmetic \cite{MMS18}.
We prove the following backward stability guarantee for a Lanczos-based implementation of SPIR:

\begin{inftheorem}[Backward stability of SPIR] \label{infthm:spir}
    If the matrix $\mat{A}$ is numerically full-rank so that $\kappa u \ll 1$, $n \ge \log(1/u)$, and appropriate parameter choices are made, a Lanczos-based implementation of SPIR is backward stable and runs in $\order(mn \log (n/u) + n^3 \log n)$ operations or fewer.
\end{inftheorem}

Our results have implications beyond least-squares.
In \rr{subsequent work~\cite{epperly2025stable}, it is 
demonstrated} that an approach based on preconditioning the normal equations and iterative refinement can be used to give backward stable solutions to square, nonsymmetric systems of linear equations.
Note that, in this setting, more than two refinement steps may be needed to obtain backward stability, \rr{as a sketch-and-solve initialization is not available}.

\subsection{Our approach to proving backward stability}

We now briefly comment on how we prove \cref{infthm:meta-stability,infthm:fossils,infthm:spir}.
For direct \QR-based least-squares solvers, backward stability is established by directly analyzing the algorithm, interpreting each rounding error made during computation as a perturbation to the input.
For iterative methods, this approach breaks down, leading us to develop a different approach.
We are unaware of any existing work demonstrating backward stability for an iterative least-squares algorithm.

To establish backward stability, we will use the following characterization:

\begin{theorem}[Backward stability, componentwise errors] \label{thm:backward-componentwise}
    Consider the least-squares problem \cref{eq:ls} and normalize so that $\norm{\mat{A}} = \norm{\vec{b}} = 1$.
    Let $\mat{A} = \sum_{i=1}^n \sigma_i \vec{u}_i^{\vphantom{\top}}\vec{v}_i^\top$ be an \SVD.
    Then $\vec{\hat{x}}$ is a backward stable solution to \cref{eq:ls} if and only if the component of the error $\vec{\hat{x}} - \vec{x}$ in the direction of each singular vector satisfies the bound
    \begin{equation} \label{eq:componentwise-error}
        \left| \vec{v}_i^\top (\vec{\hat{x}} - \vec{x}) \right| \lesssim \sigma_i^{-1}\cdot \left(1+\norm{\vec{\hat{x}}}\right) u + \sigma_i^{-2} \cdot \norm{\vec{b} - \mat{A}\vec{\hat{x}}}u \quad \text{for } i =1,\ldots,n.
    \end{equation}
\end{theorem}

To the best of our knowledge, this result is new, though the derivation is short using the analysis of \cite{GJT12}.
See \cref{app:backward-componentwise} for a proof.

\subsection{\rr{Backward stability versus strong forward stability}}

\rr{In addition to being useful in our proof of stability,} \cref{thm:backward-componentwise} helps to highlight the difference between forward stability and \rr{backward} stability.
\rr{We have the following result, proven in \cref{sec:forward-componentwise}:}

\begin{proposition}[\rr{Strong forward stability, componentwise errors}] \label{cor:forward-componentwise}
    \rr{Use the normalization $\norm{\mat{A}} = \norm{\vec{b}} = 1$\rr{, assume that $\mat{A}$ is numerically full-rank $\cond(\mat{A})u\ll 1$, and let $\vec{\hat{x}} \in \real^n$.}
    (The notation $\ll$ is defined formally in \cref{sec:analysis}.)
    The following are equivalent:
    \begin{enumerate}[label=(\arabic*)]
        \item $\vec{\hat{x}}$ is strongly forward stable.
        \item The componentwise errors admit the following bound in $\vec{x}$:
        \begin{equation} \label{eq:strong-forward-componentwise}
        \left| \vec{v}_i^\top (\vec{\hat{x}} - \vec{x}) \right| \lesssim \sigma_i^{-1}\cdot \norm{\vec{x}} u + \sigma_i^{-1}\sigma_n^{-1} \cdot \norm{\vec{b} - \mat{A}\vec{x}}u \quad \text{for } i =1,\ldots,n.
    \end{equation}
        \item The componentwise errors admit the following bound in $\vec{\hat{x}}$: 
        \begin{equation} \label{eq:strong-forward-componentwise2}
        \left| \vec{v}_i^\top (\vec{\hat{x}} - \vec{x}) \right| \lesssim \sigma_i^{-1}\cdot \left(1+\norm{\vec{\hat{x}}}\right) u + \sigma_i^{-1}\sigma_n^{-1} \cdot \norm{\vec{b} - \mat{A}\vec{\hat{x}}}u \quad \text{for } i =1,\ldots,n.
    \end{equation}
    \end{enumerate}}
\end{proposition}

Comparing~\cref{eq:componentwise-error,eq:strong-forward-componentwise}, we see that a backward stable method has componentwise errors that are small in the direction of the dominant right singular vectors of $\mat{A}$, \rr{whereas a strongly forward stable method can have larger errors in these directions.
One consequence of this result is that it shows that a strongly forward stable method is backward stable unless the problem is both ill-conditioned and has a large residual.
We have the following result, proven in \cref{sec:strong-forward-enough}:
\begin{corollary}[When is strong forward stability enough?] \label{cor:strong-forward-enough}
    Enforce the normalization $\norm{\mat{A}} = \norm{\vec{b}} = 1$, assume $\mat{A}$ is numerically full-rank $\cond(\mat{A})u \ll 1$, and suppose $\cond(\mat{A}) \norm{\vec{b} - \mat{A}\vec{x}} \lesssim 1+\norm{\vec{x}}$.
    Then the solution produced by a strongly forward stable algorithm is backward stable.
\end{corollary}}

\rr{Taken together, \cref{cor:forward-componentwise,cor:strong-forward-enough} show that a method that is strongly forward stable is \emph{almost} backward stable, except that it produces large backward errors on ill-conditioned problems with large residual.
However, replacing existing backward stable solvers with randomized methods requires they work on \emph{all} problem instances, motivating us to develop the SPIR and FOSSILS methods that are unconditionally backward stable.}

\subsection{\rr{Informal explanations for stability results}}

\rr{The stability differences between different versions of sketch-and-precondition may seem mysterious.
Why is the method unstable with zero initialization, (strongly) forward stable with sketch-and-solve initialization, and backward stable after one step of iterative refinement?}

\rr{At base, the reason for these stability differences come from the size of the quantity 
\begin{equation}
\label{eq:c-def-qual}
    \vec{c} \coloneqq \mat{R}^{-\top}\mat{A}^\top (\vec{b} - \mat{A}\vec{z})
\end{equation}
for different values of $\vec{z}$.
When the preconditioned normal equations 
\begin{equation} \label{eq:pre-norm-qual}
    (\mat{R}^{-\top}\mat{A}^\top\mat{A}\mat{R}^{-1})\vec{\delta y} = \vec{c}
\end{equation}
are solved numerically by an iterative method, the solution is accurate up to a forward error $\lesssim \kappa u \norm{\vec{c}}$.
Thus, attaining numerical stability is associated with a small value of $\norm{\vec{c}}$.}

\rr{The norm of $\vec{c}$ shrinks the closer $\vec{z}$ is to being a solution to the least-squares problem \cref{eq:ls}.
Assume $\norm{\mat{A}} = \norm{\vec{b}} = 1$.
\begin{itemize}
    \item If $\vec{z} = \vec{x}$ is the exact solution, then $\vec{c} = \vec{0}$.
    \item If $\vec{z}$ is strongly forward stable, then $\norm{\vec{c}} \lesssim (\norm{\vec{x}} + \kappa \norm{\vec{b} - \mat{A}\vec{x}})u$.
    \item If $\vec{z}$ is the sketch-and-solve solution, then $\norm{\vec{c}} \lesssim \norm{\vec{b} - \mat{A}\vec{x}}$.
    \item If $\vec{z} = \vec{0}$ is the zero initialization, then $\norm{\vec{c}} \lesssim \norm{\vec{b}}$.
\end{itemize}
We see that the better the initialization $\vec{z}$, the smaller $\norm{\vec{c}}$ is and the smaller the smaller the numerical errors in solving \cref{eq:pre-norm-qual} are.
Thus, the norm of $\vec{c}$ provides a unifying explanation for the differing stability properties of sketch-and-precondition methods.
A numerical comparison for the norm of $\vec{c}$ with different initializations is provided in \cref{tab:norm-c}.}

\begin{table}[t]
    \centering
    \caption{\rr{\textbf{\textit{The norm of $\vec{c}$.}} Norm of vector $\vec{c}$ in \cref{eq:pre-norm-qual} for different initializations $\vec{z}$ on the least-squares problem from \cref{fig:comparison}.}}
    \begin{tabular}{lc}\toprule
    \textbf{Initialization} & $\norm{\vec{c}}$ \\\midrule
    Zero initialization $\vec{z} = \vec{0}$ & 2.0e-1 \\
    Sketch-and-solve initialization $\vec{z} = \vec{x}_0$ & 3.0e-7 \\
    Strongly forward stable initialization $\vec{z} = \vec{x}_1$ & 1.9e-11
    \\\bottomrule
    \end{tabular}
    \label{tab:norm-c}
\end{table}

\rr{Let us further comment on the zero initialization $\vec{x}_0 = \vec{0}$ in particular, since it is perhaps the most mysterious.
With $\vec{x}_0 = \vec{0}$, the solution after one LSQR step has an enormous norm, and thus error. 
This is a generic phenomenon with problems with exact solution $\|\vec{x}\|\ll \|\vec{b}\|/\sigma_{\rm min}(\mat{A})$, which is common.
To see why this happens, note that the approximate solution $\vec{\hat{x}}_{(1)}$ produced by one preconditioned LSQR step takes the form $\vec{\hat{x}}_{(1)}=\mat{R}^{-1}\vec{\hat{y}}_{(1)}$, where $\|\mat{R}^{-1}\|\approx 1/\sigma_{\rm min}(\mat{A})\gg 1/\sigma_{\rm max}(\mat{A})$ and $\vec{\hat{y}}_{(1)}$ is the first LSQR iterate.
Since only step of LSQR has been performed, $\vec{\hat{y}}_{(1)}$ is still far from the true solution and has hardly has any correct digits.
Thus, we can think of $\vec{\hat{y}}_{(1)}$ as being a ``generic'' (i.e., random) vector.
Hence $\|\vec{\hat{x}}_{(1)}\|=\|\mat{R}^{-1}\vec{\hat{y}}_{(1)}\|\approx \|\mat{R}^{-1}\|\|\vec{\hat{y}}_{(1)}\|$ has a large norm, as claimed, leading to large errors.}

\section{Related work} \label{sec:related}

This section provides an overview of existing fast randomized least-squares solvers and their stability properties.

\subsection{Fast randomized least-squares solvers} \label{sec:sketch-and-precondition}

By now, there are a number of fast, randomized least-squares solvers based on Rokhlin and Tygert's randomized preconditioning idea (\cref{sec:randomized-preconditioned}).

\textbf{Sketch-and-precondition} \cite{RT08,AMT10} refers to methods which solve the least-squares problem \cref{eq:ls} using a Krylov method with the randomized preconditioner $\mat{R}$ in \cref{eq:sketch_qr}.
Rokhlin and Tygert's original proposal used conjugate gradient as the Krylov method \cite{RT08} and corresponds to \cref{alg:SPIR-basic} with only one execution of the refinement loop.

The sketch-and-precondition was popularized by the paper \cite{AMT10} under the name Blendenpik, which proposed a streamlined method in which the least-squares problem \cref{eq:ls} is solved by LSQR with the randomized preconditioner $\mat{R}$.
(The sketch-and-solve initialization \cref{eq:sketch-and-solve-2.3}, necessary for the method to be forward stable, is included as an optional setting in their code.)
There are a few more variants of sketch-and-precondition.
LSRN \cite{MSM14} is a sketch-and-precondition variant that uses the Chebyshev semi-iterative method, a Gaussian embedding $\mat{S}$, and $\mat{R} = \mat{\Sigma} \mat{V}^\top$ computed from an \SVD $\mat{S}\mat{A} = \mat{U}\mat{\Sigma}\mat{V}^\top$ as the preconditioner. 
The paper \cite{CFS21} developed a sketch-and-precondition variant using a general orthogonal factorization of $\mat{S}\mat{A}$ and released an implementation \texttt{Ski-LLS}.

In 2016, Pilanci and Wainwright \cite{PW16} developed a new class of fast randomized least-squares solvers known as \textbf{iterative sketching} methods.
In their original form, iterative sketching methods were dissimilar from sketch-and-precondition methods, though refinements \cite{OPA19,LP21} have brought the two approaches closer together.
The basic iterative sketching method can be derived by applying \emph{gradient descent} to the preconditioned system \cref{eq:ls-precond}: 
\begin{equation*}
    \vec{y}_{(j+1)} = \vec{y}_{(j)} + \mat{R}^{-\top} \mat{A}^\top (\vec{b} - \mat{A} (\mat{R}^{-1}\vec{y}_{(j)})) \quad \text{for } j=0,1,2,\ldots.
\end{equation*}
Transforming $\vec{x}_{(j)} = \mat{R}^{-1} \vec{y}_{(j)}$, we obtain the iteration for the basic iterative sketching method:
\begin{equation} \label{eq:iterative-sketching-basic}
    \vec{x}_{(j+1)} = \vec{x}_{(j)} + \mat{R}^{-1}(\mat{R}^{-\top} \mat{A}^\top (\vec{b} - \mat{A} \vec{x}_{(j)})) \quad \text{for } j=0,1,2,\ldots.
\end{equation}
The \QR factorization-based implementation of iterative sketching \cref{eq:iterative-sketching-basic} was developed in a sequence of papers \cite{PW16,OPA19,LP21,Epp24}.
We emphasize that the iterative sketching method presented only multiplies $\mat{A}$ by a sketching matrix once, making the name ``iterative sketching'' for this method something of a misnomer.

The basic iterative sketching method has a much slower convergence rate than sketch-and-precondition, and it requires a large embedding dimension of at least $d \approx 12n$ for the method to be guaranteed convergence to a forward stable solution.
(The method diverges \emph{in practice} when $d \ll 12n$. \rr{We note that all other randomized iterative methods are seen empirically to converge geometrically with an appropriate choice of sketching matrix and $d=cn$ for any $c > 1$, though the rate of convergence worsens with small $c$.})

\rr{To remedy these issues, concurrent} works \cite{OPA19,LP21} proposed accelerating iterative sketching with \emph{momentum}:
\begin{equation} \label{eq:ism}
    \vec{x}_{(j+1)} = \vec{x}_{(j)} + \alpha \mat{R}^{-1} (\mat{R}^{-\top} \mat{A}^\top (\vec{b} - \mat{A} \vec{x}_{(j)})) + \beta (\vec{x}_{(j)} - \vec{x}_{(j-1)}) \quad \text{for } i=0,1,2,\ldots.
\end{equation}
The optimal values of $\alpha$ and $\beta$ are given by \cref{eq:optimal-coeffs}.
Iterative sketching with momentum is known to be an optimal first-order method for least-squares in an appropriate sense \cite{LP20}.

Our proposed backward stable randomized least-squares solvers SPIR and FOSSILS fit neatly into the sketch-and-precondition/iterative sketching classification, with FOSSILS being an iterative sketching-type method and SPIR being a sketch-and-precondition method.
The main innovation of our methods is the use of iterative refinement to obtain backward stability.
We also note that FOSSILS applies the Polyak method to the normal equations \cref{eq:precond-normal}, whereas iterative sketching with momentum applies the preconditioned Polyak heavy ball method to the original least-squares problem \cref{eq:ls}; this difference also has effects on numerical stability.

\subsection{Numerical stability of randomized least-squares solvers} \label{sec:existing-stability-least-squares}

Critical investigation of the numerical stability of randomized least-squares solvers was initiated by Meier, Nakatsukasa, Townsend, and Webb \cite{MNTW24}, who demonstrated that \rr{sketch-and-precondition is not backward stable.
Their numerical results showed that, with trivial initialization $\vec{x}_0 = \vec{0}$, sketch-and-precondition is not even forward stable, and they noted that the sketch-and-solve initialization (as suggested in~\cite{RT08}) improves the stability properties considerably, but not to the level of backward stability.}
This finding is reproduced in \cref{fig:comparison}.
To remedy this instability, Meier et al.\ introduced sketch-and-apply, a backward stable version of sketch-and-precondition that runs in $\order(mn^2)$ operations, the same asymptotic cost as Householder \QR.

Meier et al.'s work left open the question of whether \emph{any} least-squares solvers with a fast $\tilde{\order}(mn + n^3)$ runtime was stable, even in the forward sense.
This question was resolved by Epperly, who proved that iterative sketching is strongly forward stable \cite{Epp24}.
\rr{Taken together, the numerical results of \cite{MNTW24,Epp24} suggested that sketch-and-precondition with the sketch-and-solve initialization is strongly forward stable.}
\rr{Empirically, we find there is a class of problems with large residual and moderately large condition number for which neither iterative sketching nor sketch-and-precondition algorithms provide backward stable solutions. This region can be observed in the right panel of~\cref{fig:grids_fossils}.}

\rr{The current paper effectively resolves the line of inquiry of \cite{MNTW24,Epp24} by establishing the existence of a fast backward stable least-squares solver and rigorously analyzing the stability properties of sketch-and-precondition with different forms of initialization (\cref{thm:spir-stability}).}

\section{SPIR and FOSSILS: Implementation guidance} \label{sec:FOSSILS}

This section begins by discussing posterior estimates for the backward error (\cref{sec:qa}).
After which, we present implementations of SPIR and FOSSILS that use these estimates to adaptively determine the number of algorithm steps (\cref{sec:implementation}).
We also comment on other implementation details, such as the choice of random embedding, detecting and mitigating numerical rank deficiency, and column scaling.
\Cref{sec:lsqr-vs-cg} comments on the choice of LSQR versus conjugate gradient as the Krylov method for SPIR, and \cref{sec:fossils-vs-spir} provides a comparison of SPIR and FOSSILS.

\subsection{Quality assurance} \label{sec:qa}

In order to deploy randomized algorithms safely in practice, it is best practice to equip them with posterior error estimates \cite{ET24}.
In this section, we will provide fast posterior estimates for the backward error to a least-squares problem.
These estimates can be used to verify, at runtime, whether FOSSILS, SPIR, or any other solver has produced a backward stable solution to a least-squares problem.
We will use this estimate in \cref{sec:implementation} to adaptively determine the number of SPIR or FOSSILS iterations required to achieve convergence to machine-precision backward error.

\rr{Generalizing from \cref{def:backward},} the backward error for a computed solution $\vec{\hat{x}}$ to a least-squares problem \cref{eq:ls} \rr{may be} defined \rr{as}
\begin{equation} \label{eq:backward_error}
    \BE_\theta(\vec{\hat{x}}) \coloneqq \min \left\{ \norm{\onebytwo{\mat{\Delta A}}{\theta\cdot \vec{\Delta b}}}_{\rm F} : \vec{\hat{x}} = \argmin_{\vec{\hat{x}}} \norm{(\vec{b}+\vec{\Delta b}) - (\mat{A}+\mat{\Delta A})\vec{\hat{x}}\}} \right\}.
\end{equation}
\rr{This formula generalizes \cref{def:backward} by allowing perturbations to both $\mat{A}$ and $\vec{b}$, whose relative influence is measured by a} parameter $\theta \in (0,\infty]$.
Under the normalization $\norm{\mat{A}} = \norm{\vec{b}} = 1$, a method is backward stable if and only if the output $\vec{\hat{x}}$ satisfies $\BE_1(\vec{\hat{x}}) \lesssim u$.

We can compute $\BE_\theta(\vec{\hat{x}})$ stably in $\order(m^3)$ operations using the Wald\'en--Karlson--Sun--Higham formula \cite[eq.~(1.14)]{WKS95}.
This formula is expensive to compute, leading Karlson and Wald\'en \cite{KW97} and many others \cite{Gu98,Grc03,GSS07} to investigate easier-to-compute proxies for the backward error.
This series of works resulted in the following estimate \cite[sec.~2]{GSS07}:
\begin{equation*}
    \hat{\BE}_\theta(\vec{\hat{x}}) \coloneqq \frac{\theta}{\sqrt{1+\theta^2\norm{\vec{\hat{x}}}^2}} \norm{\left( \mat{A}^\top\mat{A} + \frac{\theta^2\norm{\vec{b} - \mat{A}\vec{\hat{x}}}^2}{1+\theta^2\norm{\vec{\hat{x}}}^2} \Id \right)^{-1/2} \mat{A}^\top (\vec{b} - \mat{A}\vec{\hat{x}})}.
\end{equation*}
We will call $\hat{\BE}_\theta(\vec{\hat{x}})$ the \emph{Karlson--Wald\'en estimate} for the backward error.
The Karlsen--Wald\'en estimate can be evaluated in $\order(mn^2)$ operations or, with a \QR factorization or \SVD of $\mat{A}$, only $\order(mn)$ operations \cite{GSS07}.
The quality of the Karlson--Walden\'en estimate was precisely characterized by Gratton, Jir\'anek, and Titley-Peloquin \cite{GJT12}, who showed the following:

\begin{fact}[Karlson--Wald\'en estimate] \label{fact:KW-estimate}
    The Karlson--Wald\'en estimate $\hat{\BE}(\vec{\hat{x}})$ agrees with the backward error $\BE(\vec{\hat{x}})$ up to a constant factor $\hat{\BE}_\theta(\vec{\hat{x}}) \le \BE_\theta(\vec{\hat{x}}) \le \sqrt{2}\cdot \hat{\BE}_\theta(\vec{\hat{x}})$.
\end{fact}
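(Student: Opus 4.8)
The plan is to not reprove this from scratch but to cite the result of Gratton, Jir\'anek, and Titley-Peloquin \cite{GJT12}, which is precisely what the statement asserts. Nonetheless, let me sketch the structure of the underlying argument in case a self-contained proof is wanted. The starting point is the exact Wald\'en--Karlson--Sun--Higham formula for $\BE_\theta(\vec{\hat{x}})$, which expresses the backward error as the smallest singular value (or a closely related spectral quantity) of an explicit matrix built from $\mat{A}$, $\vec{b}$, $\vec{\hat{x}}$, and $\theta$. The key algebraic fact is that the optimal perturbation $\onebytwo{\mat{\Delta A}}{\theta\cdot\vec{\Delta b}}$ can be characterized as the minimum-norm solution to the constraint that $\vec{\hat{x}}$ be a stationary point of the perturbed least-squares problem, i.e., that $(\mat{A}+\mat{\Delta A})^\top\big((\vec{b}+\vec{\Delta b}) - (\mat{A}+\mat{\Delta A})\vec{\hat{x}}\big) = \vec{0}$.

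The lower bound $\hat{\BE}_\theta(\vec{\hat{x}}) \le \BE_\theta(\vec{\hat{x}})$ is the easier direction: given any feasible perturbation $(\mat{\Delta A}, \vec{\Delta b})$, one substitutes it into the stationarity condition and manipulates to show that $\norm{\onebytwo{\mat{\Delta A}}{\theta\vec{\Delta b}}}_{\rm F}$ is at least the quantity defining $\hat{\BE}_\theta(\vec{\hat{x}})$; this amounts to bounding below the norm of a linear image by passing to the associated Gram matrix $\mat{A}^\top\mat{A} + \tfrac{\theta^2\norm{\vec{b}-\mat{A}\vec{\hat{x}}}^2}{1+\theta^2\norm{\vec{\hat{x}}}^2}\Id$ and using Cauchy--Schwarz. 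The upper bound $\BE_\theta(\vec{\hat{x}}) \le \sqrt{2}\,\hat{\BE}_\theta(\vec{\hat{x}})$ requires exhibiting a near-optimal perturbation achieving (up to the $\sqrt 2$) the estimate; one constructs an explicit rank-one-plus-correction perturbation aligned with the residual $\vec{b} - \mat{A}\vec{\hat{x}}$ and its normal-equations image $\mat{A}^\top(\vec{b}-\mat{A}\vec{\hat{x}})$, verifies feasibility, and computes its Frobenius norm. The $\sqrt{2}$ slack arises precisely because this construction does not in general hit the true optimum; tracking where the factor enters is the main bookkeeping burden.

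The main obstacle is the upper bound: producing the explicit feasible perturbation and showing its Frobenius norm is within $\sqrt 2$ of $\hat{\BE}_\theta(\vec{\hat{x}})$ involves a somewhat delicate splitting of the residual into its component in $\range(\mat{A})$ and its orthogonal complement, together with a careful choice of how to distribute the perturbation between $\mat{\Delta A}$ and $\vec{\Delta b}$ weighted by $\theta$. Since this is all carried out in full in \cite{GJT12}, and the result is not needed in sharpened form anywhere in this paper, I would simply state the fact with a citation rather than reproduce the derivation.
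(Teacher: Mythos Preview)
Your proposal is correct and matches the paper's treatment: the paper does not prove this fact at all but simply states it with a citation to \cite{GJT12}, exactly as you suggest. Your additional sketch of the argument goes beyond what the paper provides and is not needed, though it is not wrong.
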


For randomized least-squares solvers, the $\order(mn^2)$ cost of the Karlson--Wald\'en estimate is still prohibitive.
To address this challenge, we propose the following \emph{sketched Karlson--Wald\'en estimate}:
\begin{equation} \label{eq:sketched-kw}
    \hat{\BE}_{\theta,{\rm sk}}(\vec{\hat{x}}) \coloneqq \frac{\theta}{\sqrt{1+\theta^2\norm{\vec{\hat{x}}}^2}} \norm{\left( (\mat{S}\mat{A})^\top(\mat{S}\mat{A}) + \frac{\theta^2\norm{\vec{b} - \mat{A}\vec{\hat{x}}}^2}{1+\theta^2\norm{\vec{\hat{x}}}^2} \Id \right)^{-1/2} \mat{A}^\top (\vec{b} - \mat{A}\vec{\hat{x}})}.
\end{equation}
This estimate satisfies the following bound, proven in \cref{app:sketched-KW-proof}:

\begin{proposition}[Sketched Karlson--Wald\'en estimate] \label{prop:sketched-KW}
    Let $\mat{S}$ be a sketching matrix for $\mat{A}$ with distortion $0 < \eta < 1$.
    The sketched Karlson--Wald\'en estimate $\hat{\BE}_{\theta,\mathrm{sk}}(\vec{\hat{x}})$ satisfies the bounds
    \begin{equation*}
        (1-\eta)\cdot \hat{\BE}_{\theta,{\rm sk}}(\vec{\hat{x}}) \le \BE_\theta(\vec{\hat{x}}) \le \sqrt{2}(1+\eta) \cdot \hat{\BE}_{\theta,{\rm sk}}(\vec{\hat{x}}).
    \end{equation*}
\end{proposition}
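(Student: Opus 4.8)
The plan is to exploit the fact that $\hat{\BE}_\theta(\vec{\hat{x}})$ and $\hat{\BE}_{\theta,\mathrm{sk}}(\vec{\hat{x}})$ differ \emph{only} in the matrix appearing inside the inverse square root. Writing $\mu \coloneqq \theta^2\norm{\vec{b}-\mat{A}\vec{\hat{x}}}^2/(1+\theta^2\norm{\vec{\hat{x}}}^2) \ge 0$ and $\vec{v} \coloneqq \mat{A}^\top(\vec{b}-\mat{A}\vec{\hat{x}})$, both estimates have the form $\tfrac{\theta}{\sqrt{1+\theta^2\norm{\vec{\hat{x}}}^2}}\norm{(\mat{M}+\mu\Id)^{-1/2}\vec{v}}$, with $\mat{M} = \mat{A}^\top\mat{A}$ for the Karlson--Wald\'en estimate and $\mat{M} = (\mat{S}\mat{A})^\top(\mat{S}\mat{A})$ for its sketched version \cref{eq:sketched-kw}; crucially, the scalar prefactor, the regularization scalar $\mu$, and the vector $\vec{v}$ are all identical in the two expressions. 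Hence it suffices to compare $\norm{(\mat{A}^\top\mat{A}+\mu\Id)^{-1/2}\vec{v}}$ with $\norm{((\mat{S}\mat{A})^\top(\mat{S}\mat{A})+\mu\Id)^{-1/2}\vec{v}}$ for this one fixed vector.

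First I would rephrase the subspace-embedding guarantee \cref{eq:sketching-matrix} in the Loewner (positive-semidefinite) order. Squaring \cref{eq:sketching-matrix} and using $\norm{\mat{S}\mat{A}\vec{y}}^2 = \vec{y}^\top(\mat{S}\mat{A})^\top(\mat{S}\mat{A})\vec{y}$ and $\norm{\mat{A}\vec{y}}^2 = \vec{y}^\top\mat{A}^\top\mat{A}\vec{y}$ gives
\[
    (1-\eta)^2\,\mat{A}^\top\mat{A} \preceq (\mat{S}\mat{A})^\top(\mat{S}\mat{A}) \preceq (1+\eta)^2\,\mat{A}^\top\mat{A}.
\]
Adding $\mu\Id$ to each term and absorbing the shift via $(1-\eta)^2 \le 1 \le (1+\eta)^2$ (so that $\mu\Id \succeq (1-\eta)^2\mu\Id$ and $\mu\Id \preceq (1+\eta)^2\mu\Id$, valid since $\mu \ge 0$) yields
\[
    (1-\eta)^2\,(\mat{A}^\top\mat{A}+\mu\Id) \preceq (\mat{S}\mat{A})^\top(\mat{S}\mat{A})+\mu\Id \preceq (1+\eta)^2\,(\mat{A}^\top\mat{A}+\mu\Id).
\]
Since $\mat{A}$ has full column rank, all three matrices are positive definite, so taking inverses reverses the order. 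Applying the resulting inequality to the fixed vector $\vec{v}$, using $\vec{v}^\top\mat{N}^{-1}\vec{v} = \norm{\mat{N}^{-1/2}\vec{v}}^2$ for positive definite $\mat{N}$, and taking square roots shows that $\norm{((\mat{S}\mat{A})^\top(\mat{S}\mat{A})+\mu\Id)^{-1/2}\vec{v}}$ lies between $\tfrac{1}{1+\eta}$ and $\tfrac{1}{1-\eta}$ times $\norm{(\mat{A}^\top\mat{A}+\mu\Id)^{-1/2}\vec{v}}$. Multiplying through by the common prefactor, this is precisely $\tfrac{1}{1+\eta}\,\hat{\BE}_\theta(\vec{\hat{x}}) \le \hat{\BE}_{\theta,\mathrm{sk}}(\vec{\hat{x}}) \le \tfrac{1}{1-\eta}\,\hat{\BE}_\theta(\vec{\hat{x}})$.

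Finally I would combine this two-sided estimate with \cref{fact:KW-estimate}, which gives $\hat{\BE}_\theta(\vec{\hat{x}}) \le \BE_\theta(\vec{\hat{x}}) \le \sqrt{2}\,\hat{\BE}_\theta(\vec{\hat{x}})$. The left inequality of the previous display rearranges to $(1-\eta)\,\hat{\BE}_{\theta,\mathrm{sk}}(\vec{\hat{x}}) \le \hat{\BE}_\theta(\vec{\hat{x}}) \le \BE_\theta(\vec{\hat{x}})$, and the right inequality together with \cref{fact:KW-estimate} gives $\BE_\theta(\vec{\hat{x}}) \le \sqrt{2}\,\hat{\BE}_\theta(\vec{\hat{x}}) \le \sqrt{2}(1+\eta)\,\hat{\BE}_{\theta,\mathrm{sk}}(\vec{\hat{x}})$, which are exactly the claimed bounds. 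There is no serious obstacle in this argument; the one step that needs a moment's care is the shift-absorption, which relies on the regularization scalar $\mu$ being the same nonnegative number in both estimates (a feature of the definition \cref{eq:sketched-kw}) together with the elementary inequalities $(1-\eta)^2 \le 1 \le (1+\eta)^2$.
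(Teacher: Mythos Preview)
Your proof is correct. It takes a somewhat different route than the paper's. The paper writes the squared ratio $\bigl(\hat{\BE}_{\theta,\mathrm{sk}}/\hat{\BE}_\theta\bigr)^2$ as a generalized Rayleigh quotient, bounds it by an extremal eigenvalue via the Rayleigh--Ritz principle, and then separates the regularization shift from the main term using the elementary inequality $(a+x)/(1+x) \ge \min\{1,a\}$ together with the whitening result \cref{fact:whitening}. You instead work directly in the Loewner order: the subspace-embedding property gives $(1-\eta)^2\mat{A}^\top\mat{A} \preceq (\mat{S}\mat{A})^\top(\mat{S}\mat{A}) \preceq (1+\eta)^2\mat{A}^\top\mat{A}$, and your shift-absorption step $(1-\eta)^2 \le 1 \le (1+\eta)^2$ handles the $\mu\Id$ term in one line, after which inversion and evaluation at the single vector $\vec{v}$ finish the job. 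Your argument is a bit more elementary and avoids the detour through eigenvalues and \cref{fact:whitening}; the paper's approach, in turn, makes the connection to the preconditioned singular values of $\mat{A}\mat{R}^{-1}$ more explicit. Both routes combine with \cref{fact:KW-estimate} in the same way to reach the stated bounds. One small remark: you invoke full column rank of $\mat{A}$ to get positive definiteness, but in fact whenever $\vec{b}-\mat{A}\vec{\hat{x}} \ne \vec{0}$ you have $\mu > 0$ and the shifted matrices are positive definite regardless, while if the residual vanishes both estimates are zero and the inequality is trivial.
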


To evaluate $\hat{\BE}_{\theta,{\rm sk}}(\vec{\hat{x}})$ efficiently in practice, we compute an \SVD of the sketched matrix
\begin{equation} \label{eq:sketch_svd}
    \mat{S}\mat{A} = \mat{U}\mat{\Sigma} \mat{V}^\top,
\end{equation}
from which $\hat{\BE}_{\theta,{\rm sk}}(\vec{\hat{x}})$ can be computed for any $\vec{\hat{x}}$ in $\order(mn)$ operations using the formula
\begin{equation*}
    \hat{\BE}_{\theta,{\rm sk}}(\vec{\hat{x}}) = \frac{\theta}{\sqrt{1+\theta^2\norm{\vec{\hat{x}}}^2}} \norm{\left( \mat{\Sigma}^2 + \frac{\theta^2\norm{\vec{b} - \mat{A}\vec{\hat{x}}}^2}{1+\theta^2\norm{\vec{\hat{x}}}^2} \Id \right)^{-1/2} \mat{V}^\top \mat{A}^\top (\vec{b} - \mat{A}\vec{\hat{x}})}.
\end{equation*}

\rr{A limitation of the sketched Karlson--Wald\'en is that the precise numerical value of the distortion $\eta$ is hard to estimate at runtime.
Fortunately, well-designed random embeddings such as sparse sign embeddings \cite[sec.~9.2]{MT20} or randomized trigonometric transforms \cite[sec.~9.3]{MT20} (with rerandomization \cite[Rem.~9.2]{MT20}) have been \emph{extensively} tested and achieve a distortion $\eta \approx \sqrt{n/d}$ with neglible failure probability.
Provided one uses one of these types of embeddings and sets the embedding dimension sufficiently high (say, $d\ge 2n$), we find the sketched Karlson--Wald\'en estimate produces extremely reliable estimates of the backward error (accurate up to a small multiple).}

\subsection{General implementation recommendations} \label{sec:implementation}

In this section, we provide guidance for implementing SPIR and FOSSILS and propose a few tweaks to make SPIR and FOSSILS as efficient as possible in practice.
As always, specific implementation details should be tuned to the available computing resources and the format of the problem data (e.g., dense, sparse, etc.).
Automatic tuning of randomized least-squares solvers was considered in the recent paper \cite{CDD+23}.
We combine all of the implementation recommendations from this section into a FOSSILS implementation \cref{alg:FOSSILS-recommended}.
An implementation of SPIR using these principles is similar and is omitted for space.

\paragraph{Column scaling.}
As a simple pre-processing step, we recommend beginning SPIR or FOSSILS by scaling the matrix $\mat{A}$ so that all of its columns have $\ell_2$ norm one.
This scaling can improve the conditioning of $\mat{A}$, producing an optimal conditioning  up to a factor of $ \sqrt{n}$ among all possible column scalings (see \cite{sluis1969condition}, \cite[Thm.~7.5]{Hig02}).
Scaling the columns in this way ensures that SPIR and FOSSILS are \emph{columnwise} backward stable \cite[p.~385]{Hig02}. 

\paragraph{Choice of embedding.}
For computational efficiency, we must use a sketching matrix with a fast multiply operation, of which there are many \cite[sec.~9]{MT20}.
Based on the timing experiments in \cite{DM23,Epp25}, we recommend using \emph{sparse sign embeddings} \cite[sec.~9.2]{MT20}, i.e., a matrix of the form
\begin{equation*}
    \mat{S} = \zeta^{-1/2} \begin{bmatrix}
        \vec{s}_1 & \cdots & \vec{s}_m
    \end{bmatrix},
\end{equation*}
whose columns $\vec{s}_1,\ldots,\vec{s}_m \in \real^m$ are independent random vectors with $\zeta$ nonzero entries placed in uniformly random positions.
The nonzero entries of $\vec{s}_i$ take the values $\pm 1$ with equal probability.
Cohen \cite{Coh16} showed that a sparse sign embedding is a sketching matrix for $\mat{A} \in \real^{m\times n}$ provided
\begin{equation} \label{eq:cohen}
    d \ge \mathrm{const} \cdot \frac{n \log n}{\eta^2} \quad \text{and} \quad \zeta \ge \mathrm{const} \cdot \frac{\log n}{\eta}.
\end{equation}
See also \cite{CDDR23,CDD25,Tro25,CEMT25} for more recent analyses of sparse sketching matrices with different tradeoffs between $d$ and $\zeta$.
The paper \cite{CFS21} shows that sparse sign embeddings with a small constant value of $\zeta$ are effective for incoherent matrices.
(This incoherence assumption can be enforced algorithmically by first applying a randomized trigonometric transform \cite[sec.~4.3]{CFS21}, but this can have significant effects on the runtime \cite{DM23,Epp25}.)

Following \cite[sec.~3.3]{TYUC19}, we use $\zeta = 8$ nonzeros per column in our experiments.
In our experiments with dense matrices of size $m = 10^5$ and $10^1 \le n \le 10^3$, we found the runtime to not be very sensitive to the embedding dimension $d \in [10n,30n]$.
We find the value $d = 12n$ tends to work across a range of $n$ values.

In order to set the parameters $\alpha$ and $\beta$ for FOSSILS, we need a numeric value for the distortion $\eta$ of the embedding.
Here, a valuable heuristic---justified rigorously for Gaussian embeddings---is $\eta \approx \sqrt{n/d}$.
Therefore, we use $\eta = \sqrt{n/d}$ to choose $\alpha$ and $\beta$ in our experiments.
We remark that, for smaller values of $d$ (e.g., $d=4n$), this value of $\eta$ sometimes led to issues; a slightly larger value $\eta = 1.1\sqrt{n/d}$ fixed these problems.

\paragraph{Replacing \QR by \SVD.} 
To implement the posterior error estimates from \cref{sec:qa}, we need an \SVD $\mat{S}\mat{A} = \mat{U}\mat{\Sigma}\mat{V}^\top$.
To avoid computing two factorizations of $\mat{S}\mat{A}$, we recommend using an \SVD-based implementation of SPIR and FOSSILS, removing the need to ever compute a \QR factorization of $\mat{S}\mat{A}$.
To do this, observe that \cref{fact:whitening} holds for any orthogonal factorization $\mat{S}\mat{A} = \mat{Q}\mat{R}$.
Thus, an \SVD variant of FOSSILS using  $\mat{Q} = \mat{U}$ and $\mat{R} = \mat{\Sigma}\mat{V}^\top$ achieves the same rate of convergence as the original \QR-based SPIR and FOSSILS procedures. 
In our experience, the possible increased cost of the \SVD over \QR factorization pays for itself by allowing us to terminate SPIR and FOSSILS early with a posterior guarantee for backward stability.
(We note that our analysis is specialized to a \QR-based implementation; \QR and \SVD implementations appear to be similarly stable in practice.)
An \SVD-based preconditioner is also used in LSRN~\cite{MSM14}.

\paragraph{Adaptive stopping.}
Finally, to optimize the runtime and to assure backward stability, we can use posterior error estimates to adaptively stop the iterative solver.
We need a different stopping criteria for each of the two refinement steps:
\begin{enumerate}
    \item As will become clear from the analysis in \cref{sec:analysis}, the \emph{backward stability} of SPIR and FOSSILS requires $\vec{x}_1$ produced during the first refinement step to be \emph{ strongly forward stable}.
    Therefore, we can use a variant of the stopping criterion from \cite[sec.~3.3]{Epp24} to stop the inner solver during the first refinement step: when performing an update $\vec{\delta y}_{(j+1)} \gets \vec{\delta y}_{(j)} + \vec{\Delta}$,
    \begin{equation*}
        \text{STOP} \quad \text{when} \quad \norm{\vec{\Delta}} \le u(\gamma \cdot \norm{\mat{\Sigma}} \norm{\vec{x}_0} + \rho\cdot\cond(\mat{\Sigma})\norm{\vec{\vec{b}-\mat{A}\vec{x}_0}}).
    \end{equation*}
    We use $\gamma = 1$ and $\rho = 0.04$ in our experiments.
    \item At the end of the second refinement step, we demand that the computed solution be backward stable.
    We stop the iteration when the sketched Karlson--Wald\'en estimate of the backward error of the solution $\vec{x}_2$ with parameter $\theta = \norm{\mat{A}}_{\rm F} / \norm{\vec{b}}$ is below $u\norm{\mat{A}}_{\rm F}$.
    To balance the expense of forming the error estimate against the benefits of early stopping, we recommend computing the Karlson--Wald\'en estimate only once every five iterations. 
\end{enumerate}

\paragraph{Numerical rank deficiency.}
In floating point arithmetic, randomized iterative methods can fail catastrophically for problems that are numerically rank-deficient (i.e., $\kappa \gtrsim u^{-1}$).
Indeed, without modification, sketch-and-precondition, iterative sketching, SPIR, and FOSSILS all exit with \texttt{NaN}'s when run on a matrix of all ones, as does Householder \QR itself.

Ultimately, it is a question of software design---not of mathematics---what to do when a user requests a least-squares solver to solve a least-squares problem with a numerically rank-deficient $\mat{A}$.
For SPIR and FOSSILS, we adopt the following approach to deal with numerically rank-deficient problems.
First, $\cond(\mat{\Sigma})$ is a good proxy for the condition number $\kappa$ of $\mat{A}$ \cite[Fact~2.3]{NT24}:
\begin{equation*}
    \frac{1-\eta}{1+\eta} \cdot \cond(\mat{\Sigma})\le \kappa \le \frac{1+\eta}{1-\eta} \cdot \cond(\mat{\Sigma}).
\end{equation*}
Therefore, we can test for numerical rank deficiency at runtime.
If a problem is determined to be numerically rank-deficient, one can fall back on a column pivoted \QR- or \SVD-based solver, if desired. \rr{Considering the necessity of the SVD for quality assurance at run-time, we implement a truncated SVD-based preconditioner. In particular, having computed the SVD of the sketch $\mat{SA} = \mat{U_1\Sigma_1 V_1^\top} + \mat{U_2\Sigma_2 V_2^\top}$, where the singular values of $\mat{\Sigma_1}$ are all greater than some small constant $\tau$ (we set $\tau = 30u$), the inverted preconditioner is $\mat{V_1\Sigma_1^{-1}}$. This is also suggested in the generic sketch-and-precondition algorithm presented in~\cite{CFS21}.}
Alternatively, \rrrr{if the user still wants a fast least-squares solution, computed by SPIR or FOSSILS,} we can solve a regularized least-squares problem
\begin{equation*}
    \vec{\tilde{x}} = \argmin_{\vec{x} \in \real^n} \norm{\vec{b} - \mat{A} \vec{x}}^2 + \mu^2 \norm{\vec{x}}^2,
\end{equation*}
where $\mu > 0$ is a regularization parameter chosen to make this problem numerically well-posed \rr{(say, $\mu \coloneqq 10 \norm{\mat{A}}_{\rm F} u$)}. One can instead regularize by adding a Gaussian perturbation to $\mat{A}$ of size $\order(u\norm{\mat{A}})$ \cite{MNTW24}, but this may destroy benefitial structure such as sparsity.

\rr{We find that the truncated preconditioner approach results in slightly smaller forward errors than the direct regularization approach, while maintaining backward stability. }
See \cref{alg:FOSSILS-recommended} for \rr{implementation} details.

\begin{algorithm}[!th]
	\caption{FOSSILS: Recommended implementation} \label{alg:FOSSILS-recommended}
	\begin{algorithmic}[1]
		\Require Matrix $\mat{A}\in\real^{m\times n}$, right-hand side $\vec{b}\in\real^m$
		\Ensure Backward stable least-squares solution $\vec{x}\in\real^n$
		\State $\mat{d} \gets \Call{ColumnNorms}{\mat{A}}$, $\mat{A} \gets \mat{A}\diag(\vec{d})^{-1}$ \Comment{Normalize columns}
		\State $d\gets 12n$, $\zeta \gets 8$ \Comment{Parameters for embedding}
		\State $\beta \gets d/n$, $\alpha\gets(1-\beta)^2$ \Comment{Or $\beta = 1.1 \cdot d/n$, to be safe}
		\State $\mat{S} \gets \Call{SparseSignEmbedding}{m,d,\zeta}$
		\State $(\mat{U}_{\rr{\rm full}},\mat{\Sigma}_{\rr{\rm full}},\mat{V}_{\rr{\rm full}})\gets \Call{SVD}{\mat{S}\mat{A}}$ \Comment{Sketch and \SVD}
		\State $\mathrm{normest} \gets \mat{\Sigma}_{\rr{\rm full}}(1,1)$, $\mathrm{condest} \gets \mat{\Sigma}_{\rr{\rm full}}(1,1) / \mat{\Sigma}_{\rr{\rm full}}(n,n)$
		\If{$\mathrm{condest} > 1/(30u)$}
		  \State Print ``Warning! Condition number estimate is $\langle\mathrm{condest}\rangle$''
            \State \rr{$\mathrm{rank} \gets |\{\mathrm{diag}(\mat{\Sigma}_{\rm full}) > 30u\cdot \mathrm{normest}\}|$} \Comment{Truncate preconditioner if rank deficient}
            \State \rr{$\mat{U} \gets \mat{U}_{\rr{\rm full}}(:,\,1:\mathrm{rank}), \mat{\Sigma} \gets \mat{\Sigma}_{\rr{\rm full}}(1:\mathrm{rank},\,1:\mathrm{rank}), \mat{V} \gets \mat{V}_{\rr{\rm full}}(:,\,1:\mathrm{rank})$ }
		\Else 
		\State \rr{$\mat{U} \gets \mat{U}_{\rr{\rm full}},\mat{\Sigma} \gets \mat{\Sigma}_{\rr{\rm full}},\mat{V} \gets \mat{V}_{\rr{\rm full}}$}
		\EndIf
		\State $\vec{x}\gets \mat{V}(\mat{\Sigma}^{-1}(\mat{U}^\top (\mat{S}\vec{b})))$ 
		\Comment{Sketch-and-solve initialization}
		\State $\vec{r} \gets \vec{b} - \mat{A}\vec{x}$
		\State $\mat{P} \gets \mat{V}\mat{\Sigma}^{-1}$ \Comment{Preconditioner, taking the role of $\mat{R}^{-1}$}
		\For{$i=0,1$} \Comment{Two steps of iterative refinement with inner solver}
		\State $\vec{\delta y}, \vec{\delta y}_{\rm old},\vec{c} \gets \mat{P}^{\top}(\mat{A}^\top(\vec{b} - \mat{A}\vec{x}_i)$
		\For{$j=0,1,\ldots,99$} \Comment{Polyak heavy ball method, 100 iterations max}
		\State $\vec{z} \gets \mat{P}\vec{\delta y}$
		\State $\vec{\Delta} \gets \alpha(\vec{c} - \mat{P}^{\top}(\mat{A}^\top (\mat{A}\vec{z})) + \mu^2 \vec{z}) + \beta(\vec{\delta y} - \vec{\delta y}_{\rm old})$
		\State $(\vec{\delta y}, \vec{\delta y}_{\rm old}) \gets (\vec{\delta y} + \vec{\Delta},\vec{\delta y})$
		\If{$i = 0$ and $\norm{\vec{\Delta}} \le (10 \cdot \mathrm{normest} \cdot \norm{\vec{x}} + 0.4 \cdot \mathrm{condest} \cdot \norm{\vec{r}})u$} 
		\State \textbf{break} \Comment{Break after update is small during first inner solve}
		\ElsIf{$i=1$ and $\Call{Mod}{i,5} = 0$} \Comment{Check backward error every five iterations}
		\State $\vec{\hat{x}} \gets \vec{x} + \mat{P}\vec{\delta y}$
		\State $\vec{\hat{r}} \gets \vec{b} - \mat{A}\vec{\hat{x}}$
		\State $\theta \gets \norm{\mat{A}}_{\rm F}/\norm{\vec{b}}$
		\State $\hat{\mathrm{BE}}_{\rm sk} \gets \tfrac{\theta}{\sqrt{1+\theta^2\norm{\vec{\hat{x}}}^2}} \cdot \norm{ (\mat{\Sigma}_{\rr{\rm full}}^2+\tfrac{\theta^2\norm{\vec{\hat{r}}}^2}{1+\theta^2\norm{\vec{\hat{x}}}^2}\Id)^{-1/2}\mat{V}_{\rr{\rm full}}^\top(\mat{A}^\top\vec{\hat{r}}) }$ \Comment{Sketched KW \cref{eq:sketched-kw}}
		\If{$\hat{\mathrm{BE}}_{\rm sk} < \norm{\mat{A}}_{\rm F}u$}
		\State \textbf{break} \Comment{Break when backward stability achieved during second inner solve}
		\EndIf
		\EndIf
		\EndFor
		\State $\vec{x} \gets \vec{x} + \mat{P}\,\vec{\delta y}$ \Comment{Iterative refinement}
		\EndFor
		\State $\vec{x} \gets \diag(\vec{d})^{-1}\vec{x}$ \Comment{Undo column scaling}
	\end{algorithmic}
\end{algorithm}

\subsection{SPIR: Conjugate gradient or LSQR?}
\label{sec:lsqr-vs-cg}

In \cref{sec:algs}, we proposed a conjugate gradient-based implementation of SPIR (\cref{alg:SPIR-basic}).
This CG-based implementation is convenient to analyze because it is covered by the general theoretical analysis developed in \cref{sec:analysis}, allowing us to give a partial analysis of SPIR in \cref{sec:SPIR-analysis}.
In practice, however, an LSQR-based implementation of SPIR---in which \cref{eq:ls-itref} is solved by LSQR with preconditioner $\mat{R}$---seems to give an implementation of SPIR with similar stability properties to the CG-based implementation.

\begin{figure}[t]
  \centering
  \includegraphics[width=0.45\textwidth]{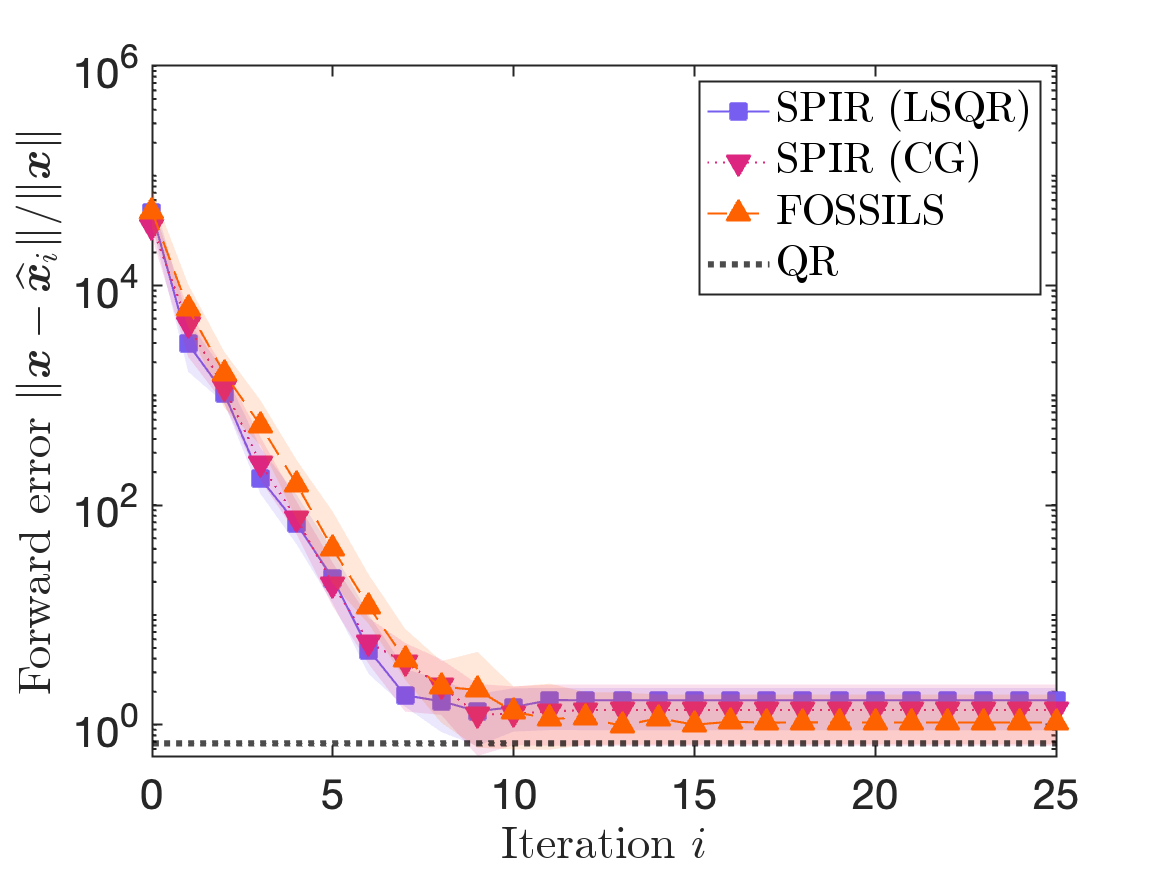}
  \includegraphics[width=0.45\textwidth]{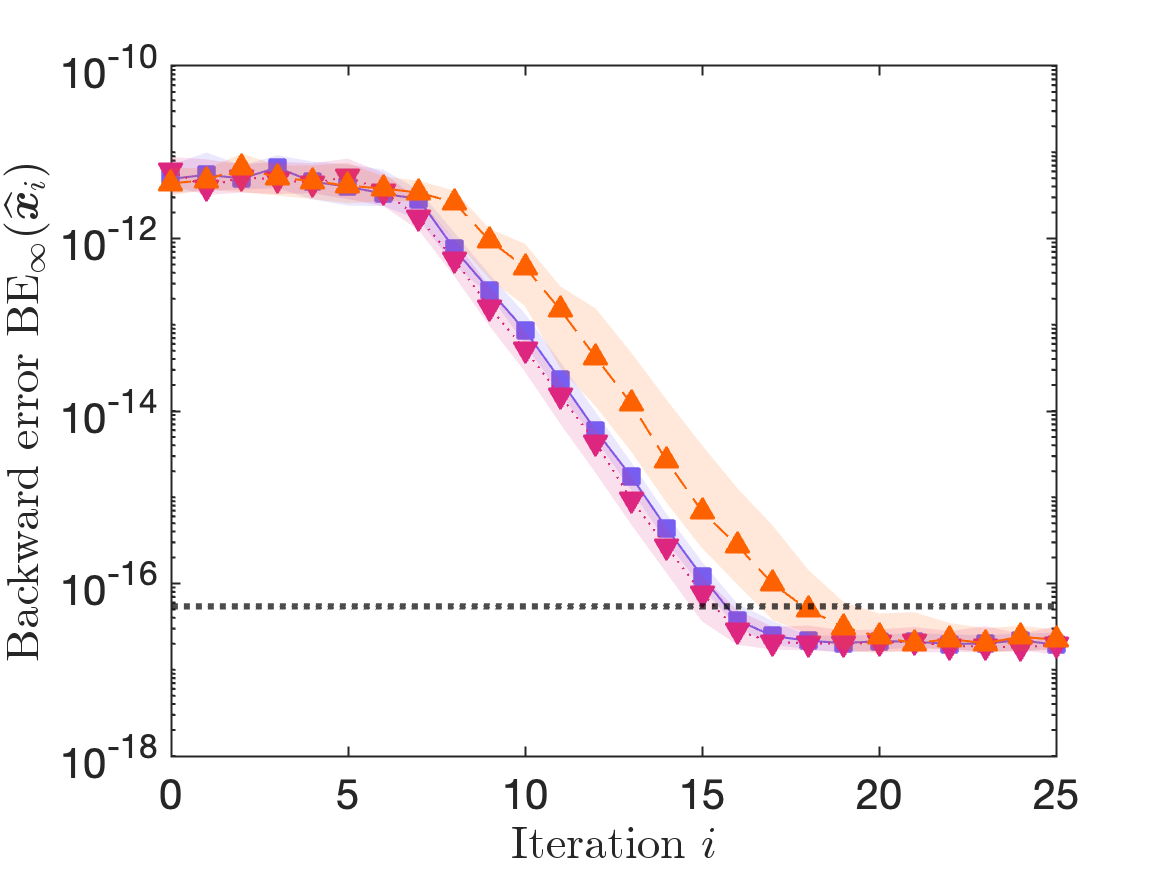} 
    
  \caption{Median forward (\emph{left}) and backward (\emph{right}) error for FOSSILS and LSQR- and CG-based versions of SPIR on the example from \cref{fig:comparison} over 25 trials, along with reference errors for Householder \QR.
  All methods use $d \coloneqq 12n$.
  Shaded regions show 20\% and 80\% quantiles.} \label{fig:spir}
\end{figure}

\Cref{fig:spir} shows the median forward (\emph{left}) and backward (\emph{right}) errors for CG- and LSQR-based implementations of SPIR.
20\% and 80\% quantiles are shown as shaded regions.
The behavior of the CG- and LSQR-based implementations are essentially indistinguishable in this example.
With small values of the embedding $d$ (e.g., $d=2n$), we have found slightly better backward error for the LSQR-based implementation.

\subsection{SPIR vs.\ FOSSILS: Which to use?}
\label{sec:fossils-vs-spir}

Which algorithm should you use: SPIR or FOSSILS?
With optimal parameters \cref{eq:optimal-coeffs}, both SPIR and FOSSILS achieve the same optimal rate of convergence (in both theory and practice).
The methods have strengths and weaknesses relative to each other.

\begin{itemize}
    \item \textbf{Benefits of FOSSILS over SPIR.} FOSSILS is simpler and easier to parallelize than SPIR.
    (See \cite{MSM14} for a discussion of parallelization.)
    Additionally, we only have a formal proof of backward stability for a Lanczos-based implementation of SPIR, so---strictly speaking---the backward stability of CG- and LSQR-based implementations of SPIR is only conjectural.
    \item \textbf{Benefits of SPIR over FOSSILS.} 
    The main weakness of FOSSILS is the need to know the distortion $\eta$ of the embedding to set the parameters $\alpha$ and $\beta$.
    The method can be slowly or even non-convergent if these parameters are set incorrectly.
    Fortunately, using the value $\eta = \sqrt{n/d}$ works well for sufficiently large values of $d$, with larger values of $\eta$ (i.e., $\eta = 1.1\sqrt{n/d}$) giving more robustness but also somewhat more slower convergence.
    By contrast, SPIR \rr{avoids} these issues \rr{and is reliable even when $d$ is set to a small multiple of $n$}.
\end{itemize}
Ultimately, we view both methods as promising candidates for deployment in software, but believe that SPIR is likely the method of choice except in certain situations.

\section{Numerical experiments} \label{sec:experiments}

In this section, we present numerical experiments demonstrating FOSSILS\rr{' and SPIR's} speed, reliability, and stability.
Code for all experiments can be found at
\actionbox{\centering \url{https://github.com/eepperly/Stable-Preconditioned-Solvers}}

\subsection{Confirming stability}

\Cref{fig:comparison_2} investigates the stability of randomized least-squares solvers for problems of increasing difficulty.
As Wedin's theorem (\cref{fact:wedin}) shows, the sensitivity of a least-squares problem depends on the condition number $\kappa$ and residual norm $\norm{\vec{b}-\mat{A}\vec{x}}$.
To generate a family of problems of increasing difficulty, we consider a sequence of randomly generated least-squares problems with 
\begin{equation} \label{eq:difficulty}
    \mathrm{difficulty} = \kappa = \frac{\norm{\vec{b}-\mat{A}\vec{x}}}{u} \in [10^0,10^{16}].
\end{equation}
For each value of the $\mathrm{difficulty}$ parameter, we generate $\mat{A}$, $\vec{x}$, and $\vec{b}$ as in \cref{fig:comparison}.
We test sketch-and-precondition, iterative sketching with momentum, SPIR, FOSSILS, and (Householder) \QR.
For sketch-and-precondition, we set $d \coloneqq 12n$ and run for $100$ iterations.
For iterative sketching with momentum, we use the implementation recommended in \cite[sec.~3.3 and app.~B]{Epp24}.
For SPIR and FOSSILS, we follow the recommendations in \cref{sec:implementation}.

\begin{figure}[t]
  \centering
  \includegraphics[width=0.45\textwidth]{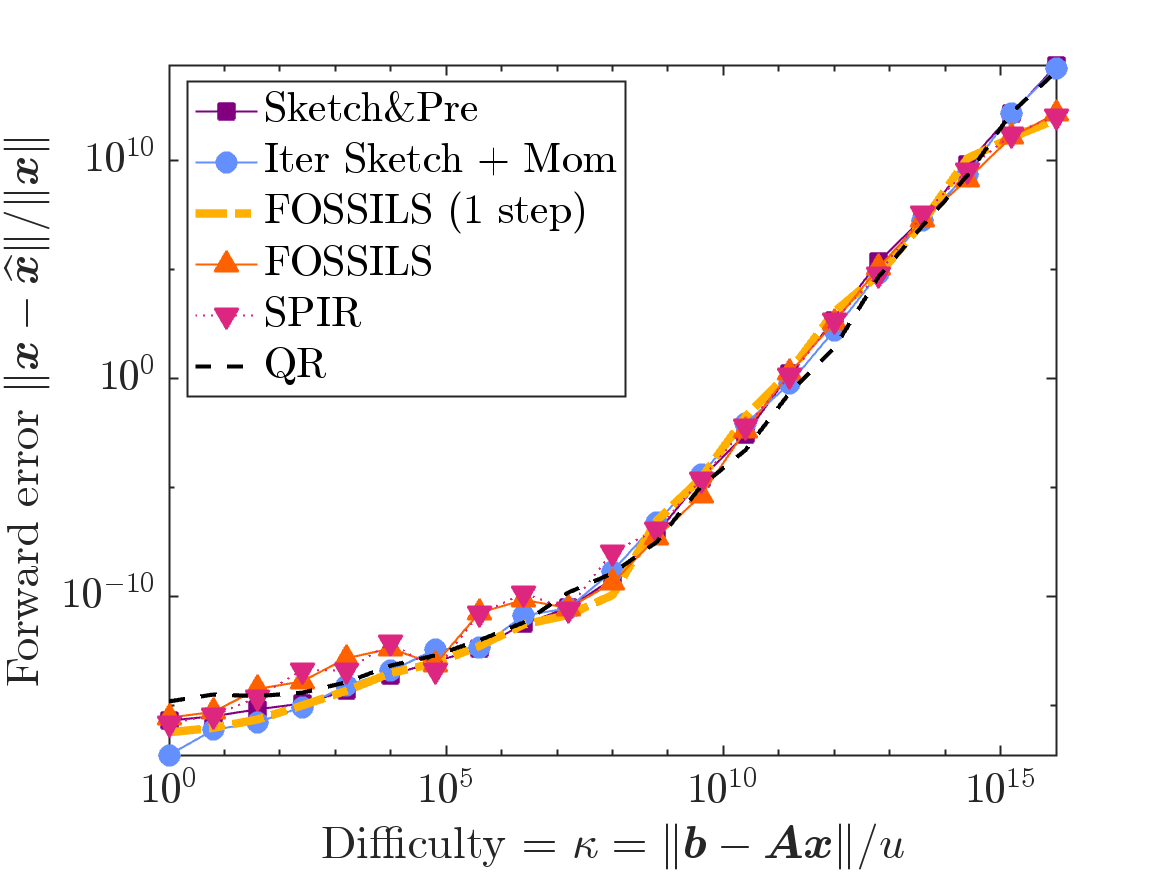}
  \includegraphics[width=0.45\textwidth]{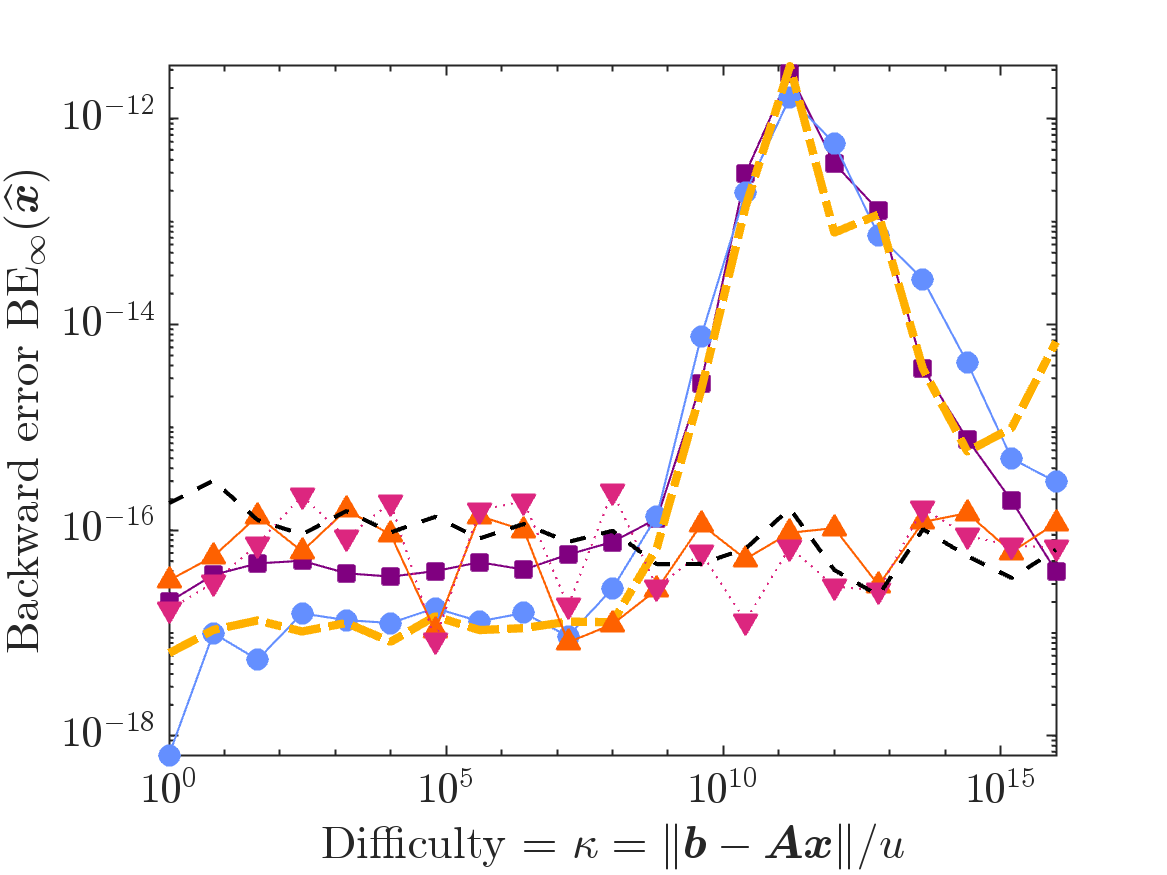} 
    
  \caption{Forward error (\emph{left}) and backward error (\emph{right}) for different least-squares solvers of different difficulties \cref{eq:difficulty}.
  FOSSILS is the only fast backward stable randomized least-squares solver, achieving comparable backward error to Householder \QR.} \label{fig:comparison_2}
\end{figure}

\Cref{fig:comparison_2} shows the forward (\emph{left}) and backward (\emph{right}) errors for these solvers on a sequence of randomly generated least-squares problems of increasing difficulty.
The major finding is the confirmation of backward stability of SPIR and FOSSILS for problems of all difficulties, achieving comparable backward error to \QR for all problems.
We note in particular that, for highly ill-conditioned problems $\kappa \gtrapprox 10^{14}$, the regularization approach developed in \cref{sec:implementation} yields a convergent and backward stable scheme.

To see that two refinement steps are necessary for FOSSILS, \cref{fig:comparison_2} also shows the forward and backward error for the output $\vec{x}_1$ of a single call to the FOSSILS outer solver, listed as FOSSILS (1 step).
We run FOSSILS (1 step) for 100 iterations to ensure convergence.
Even with 100 iterations, FOSSILS (1 step) shows similar stability properties to iterative sketching and sketch-and-precondition, being forward but not backward stable.
This demonstrates that both refinement steps in FOSSILS are necessary to obtain a backward stable scheme.

We further confirm the backward stability of SPIR in~\cref{fig:grids_fossils}, where we show a contour plot of the backward error for SPIR (\emph{left}) and iterative sketching with momentum (\emph{right}) for combinations of parameters $\kappa,\norm{\vec{b}-\mat{A}\vec{x}}/u \in [1,10^{16}]$.
As we see in the left panel, the backward error for SPIR is always a small multiple of the unit roundoff, regardless of the conditioning of $\mat{A}$ or the residual norm $\|\vec{b}-\mat{A}\vec{x}\|$.
We omit similar-looking plots for FOSSILS and sketch-and-precondition.

\begin{figure}[t]
  \centering
  \includegraphics[width=0.8\textwidth]{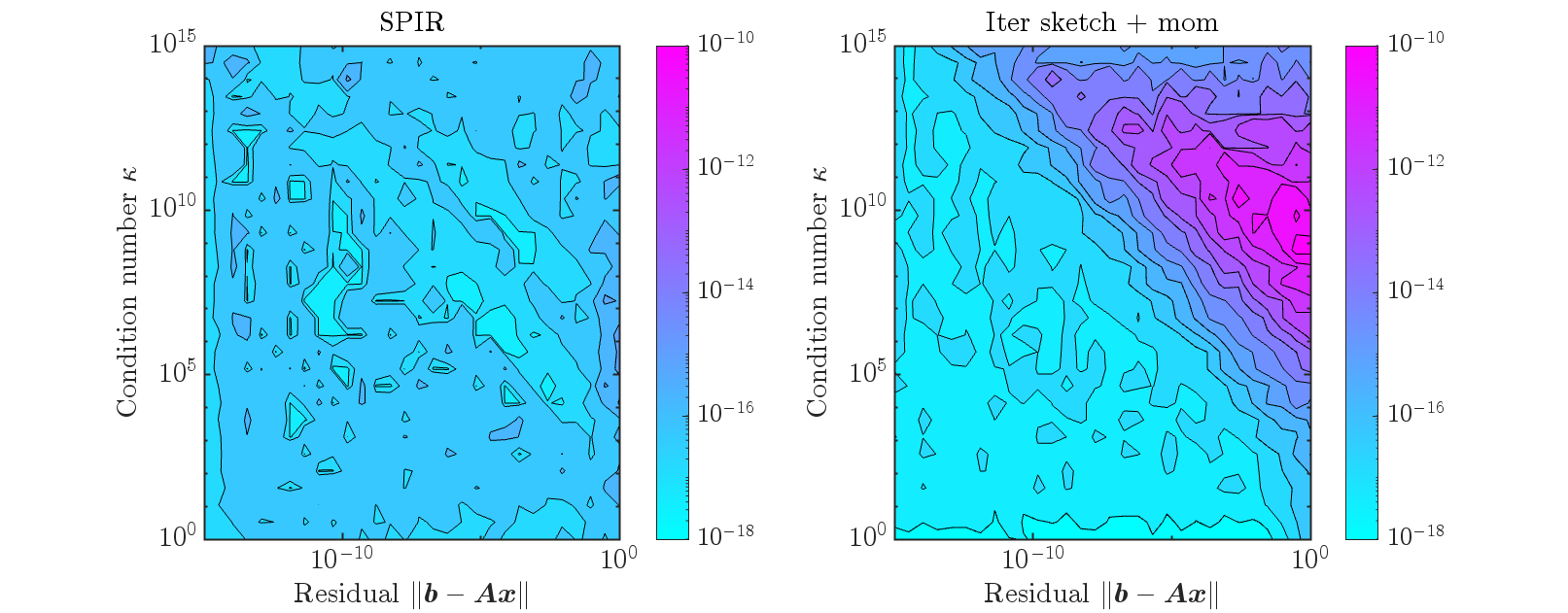}
  \caption{The backward error for SPIR (\emph{left}) and iterative sketching with momentum (\emph{right}) for sequences of randomly generated least-squares problems (generated as in \cref{fig:comparison}) varying condition number $\kappa$ and scaled residual norm $\|\vec{b}-\mat{A}\vec{x}\|/u$.} \label{fig:grids_fossils}
\end{figure}

\subsection{Runtime and iteration count} \label{sec:runtime}

\begin{figure}[t]
  \centering
  
  \includegraphics[width=0.45\textwidth]{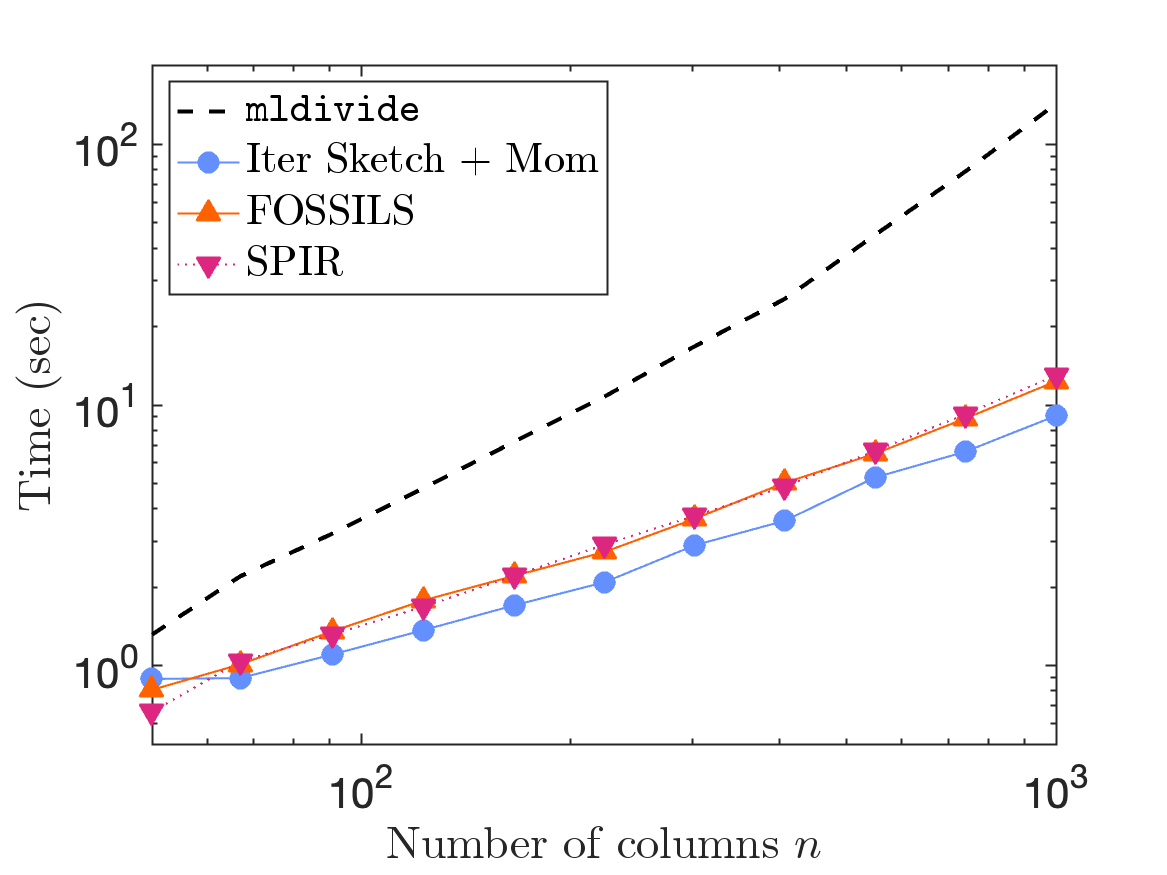}
  \includegraphics[width=0.45\textwidth]{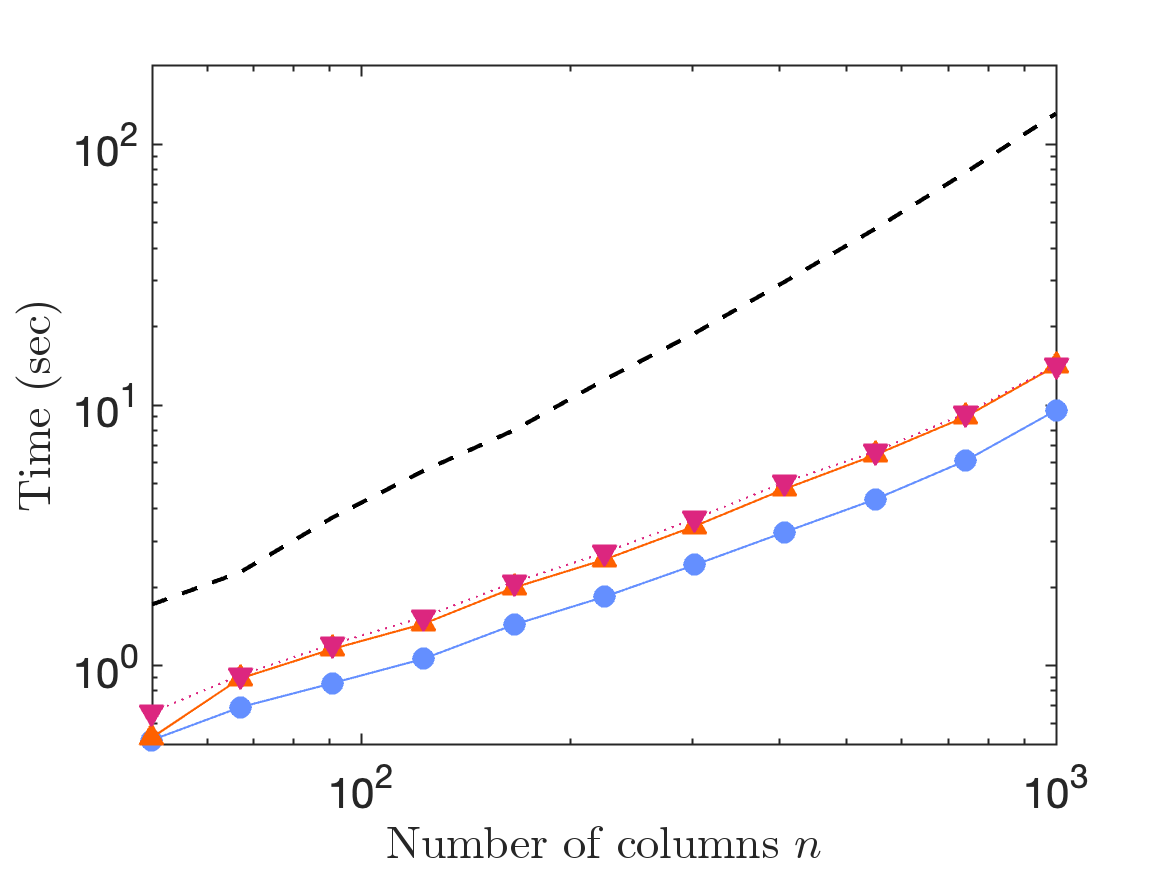}
    
  \caption{Runtime of SPIR, FOSSILS, iterative sketching with momentum, and MATLAB's \QR-based \texttt{mldivide} solver for kernel regression (real-valued, \emph{left}) and Prony (complex-valued, \emph{right}) dense least-squares problems.} \label{fig:dense}
\end{figure}

In this section, we present a runtime comparison of FOSSILS to MATLAB's QR-based direct solver \texttt{mldivide} and iterative sketching with momentum.
For our first experiments, we test on dense least-squares problems from applications:
\begin{itemize}
    \item \textbf{Kernel regression for exotic particle detection.}
    We consider least-squares problems for fitting the SUSY dataset \cite{BSW14} using a linear combination of kernel functions.
    We adopt the same setup as in \cite[sec.~3.4]{Epp24}, yielding real-valued least-squares problems of dimension $m = 10^6$ and $n \in [10^1,10^3]$.
    The condition numbers of these problems range from $\approx 10^1$ (for $n = 10^1$) to $\approx 10^7$ (for $n = 10^3$) and the residual is large $\norm{\vec{b}-\mat{A}\vec{x}}/\norm{\vec{b}} \approx 0.5$.
    \item \textbf{Prony's method and quantum eigenvalue algorithms.} 
    We consider a family of least-squares problems of dimension $m = 5\times 10^5$ and $n \in [10^1,10^3]$ that arise from applying Prony's method to estimate eigenvalues from noisy measurements from a quantum device.
    See \cref{app:prony} for details.
    The resulting least-squares problems are complex-valued $\mat{A} \in \complex^{m\times n}, \vec{b} \in \complex^m$ with a condition number of $\approx 10^6$ and residual of $\norm{\vec{b}-\mat{A}\vec{x}}/\norm{\vec{b}} \approx 10^{-6}$.
\end{itemize}
We run these experiments on a Mac Pro with a 2.7 GHz 12-Core Intel Xeon E5 processor and 64 GB 1866 MHz DDR3 RAM.
Results are shown in \cref{fig:dense}.
For all randomized methods, we use embedding dimension $d \coloneqq 12n$.
We find that FOSSILS is faster than \texttt{mldivide} for both problems at roughly $n\approx 50$, achieving a \textbf{maximal speedup of 11$\times$ (kernel regression) and 9$\times$ (Prony)} at $n=10^3$.
In particular, with our recommended implementation, the backward stable SPIR and FOSSILS solvers are nearly as fast as the forward stable iterative sketching with momentum method.

\begin{table}[t]
\centering
\caption{\textbf{\textit{Runtime comparison with sparse matrices.}} Runtime for SPIR and MATLAB's \texttt{mldivide} for ten problems from the SuiteSparse Matrix Collection \cite{DH11}.
Here, \texttt{nnz} reports the number of nonzero entries in the matrix and speedup reports the ratio of the \texttt{mldivide} and SPIR runtimes.
For problems with a beneficial sparsity pattern (problems 5--9), \texttt{mldivide} is significantly faster than SPIR; for other problems, SPIR is faster by $2.1\times$ to $6\times$. \label{tab:sparse}}
\begin{tabular}{clcccccc}\toprule
\# & Matrix & $m$ & $n$ & \texttt{nnz} & SPIR & \texttt{mldivide} & Speedup \\\midrule
1 & \texttt{JGD\_BIBD/bibd\_20\_10} & 1.8e5 & 1.9e2 & 8.3e6 & 1.3 & 2.7 & 2.1$\times$\\
2 & \texttt{JGD\_BIBD/bibd\_22\_8} & 3.2e5 & 2.3e1 & 9.0e6 & 1.6 & 3.8 & 2.4$\times$\\
3 & \texttt{Mittelmann/rail2586} & 9.2e5 & 2.6e3 & 8.0e6 & 7.0 & 40 & 5.8$\times$ \\
4 & \texttt{Mittelmann/rail4284} & 1.1e6 & 4.3e3 & 1.1e7 & 25 & 150 & 6$\times$ \\
5 & \texttt{LPnetlib/lp\_osa\_30} & 1.0e5 & 4.4e3 & 6.0e5 & 24 & 0.33 & 0.01$\times$ \\
6 & \texttt{Meszaros/stat96v1} & 2.0e5 & 6.0e3 & 5.9e5 & 58 & 0.10 & 0.002$\times$ \\
7 & \texttt{Dattorro/EternityII\_A} & 1.5e5 & 7.4e3 & 7.8e5 & 100 & 1.8 & 0.02$\times$ \\
8 & \texttt{Yoshiyasu/mesh\_deform}& 2.3e5 & 9.4e3 & 8.5e5 & 200 & 0.3 & 0.002$\times$ \\
9 & \texttt{Dattorro/EternityII\_Etilde} & 2.0e5 & 1.0e4 & 1.1e6 & 240 & 3.3 & 0.01$\times$ \\
10 & \texttt{Mittelmann/spal\_004}& 3.2e5 & 1.0e4 & 4.6e7 & 260 & 550 & 2.1$\times$ \\
\bottomrule
\end{tabular}
\end{table}

As a second timing experiment, we test SPIR and \texttt{mldivide} on problems for the SuiteSparse Matrix Collection \cite{DH11}.
We consider all rectangular problems in the collection with at least $m = 10^5$ rows and $n = 10^2$ columns, transposing wide matrices so that they are tall.
We remove any matrices that are structurally rank-deficient and consider \rr{the ten problems meeting our criteria with the smallest values of $n$}.
For each matrix, we generate $\vec{b}$ to have independent standard normal entries.
We use a smaller embedding dimension of $d = 3n$ for SPIR.
Timings for \texttt{mldivide} and SPIR are shown in \cref{tab:sparse}.
As these results show, the runtime of \texttt{mldivide} depends sensitively on the sparsity pattern.
Problems 5--9 possess a low fill-in \QR factorization and \texttt{mldivide} is 20$\times$ to a 1000$\times$ faster than SPIR.
However, \textbf{SPIR is 2.1$\times$ to 6$\times$ faster than \texttt{mldivide} on problems 1--4 and 10, which lack a benevolent sparsity pattern}.
We note that the stability of randomized iterative methods for sparse matrices is poorly understood; for matrices with few nonzero entries, sketch-and-precondition has been observed to be stable even with the zero initialization.

\begin{figure}[ht!]
    \centering
    \includegraphics[width = 0.8\linewidth]{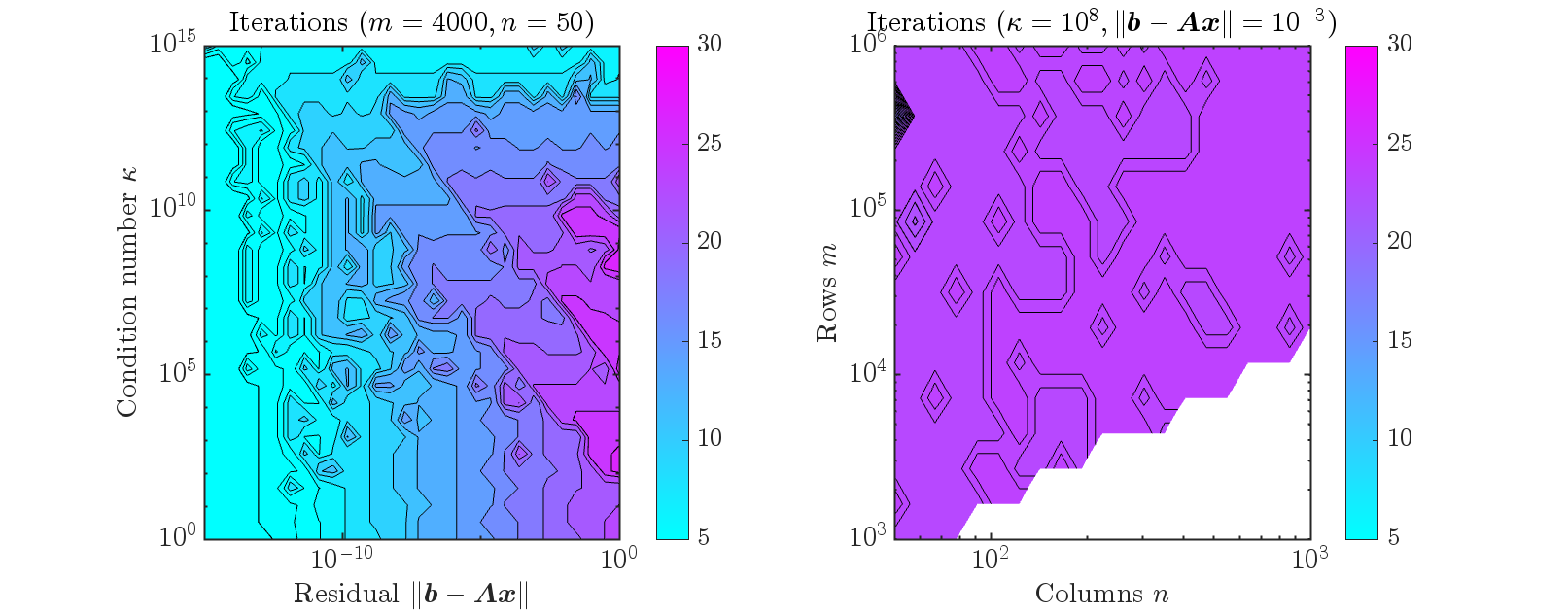}
    \caption{Total number of iterations (in both first and second refinement steps) until SPIR reaches convergence for different values of $\kappa,\norm{\vec{b} - \mat{A}\vec{x}}$ (\emph{left}) and for different values of $m\ge n$ (\emph{right}).}
    \label{fig:itcount}
\end{figure}

Finally, \cref{fig:itcount} shows the total number of conjugate gradient iterations needed for SPIR (across both refinement steps) for different parameters and problem sizes.
This figure reports two different parameter studies.
In the left panel, \cref{fig:itcount} shows the number of iterations for different values of the condition number $\kappa$ and residual norm $\|\vec{b}-\mat{A}\vec{x}\|$ for randomly generated problems of size $4000\times 50$.
For these problems, the number of iterations tends to be smaller for problems large $\kappa$ or small $\norm{\vec{b} - \mat{A}\vec{x}}$.
In all cases, however, the number of iterations is at most 30.
In the right panel, \cref{fig:itcount} shows the number of iterations for randomly generated problems of different sizes $10^3\leq m\leq 10^6$ and $50\leq n\leq 10^3$, while fixing $\kappa=10^8$ and $\norm{\vec{b}-\mat{A}\vec{x}} = 10^{-3}$.
We observe that the iteration count remains consistent across these $m,n$ values.

\section{Preconditioned iterative normal equations: Stability analysis} \label{sec:analysis}

In this section, we analyze the general meta-algorithm \cref{alg:meta-algorithm} for fast, backward stable randomized least-squares. 
Our result will be the following formalization of \cref{infthm:meta-stability}:

\begin{theorem}[Stability of the meta-algorithm] \label{thm:meta-stability}
    Assume the following:
    \begin{enumerate}[label=(\Alph*)]
        \item \textbf{Numerically full-rank.} The condition number $\kappa$ satisfies $\kappa u \ll 1$ (see formal definition below).
        \item \textbf{Non-pathological rounding errors.} The output satisfies the non-pathological rounding error assumption, defined below in \cref{sec:non-pathological}.
        \item \textbf{Stable sketching.} The matrix $\mat{S}$ is a sketching matrix for $\mat{A}$ with $d \le n^3$ rows, distortion $\eta \le 0.9$, and a forward stable multiply operation
        \begin{equation} \label{eq:stable-sketch}
            \norm{\fl(\mat{S}\mat{A}) - \mat{S}\mat{A}} \lesssim \norm{\mat{A}}u.
        \end{equation}
        \item \textbf{Stability of inner solver.} The $\Call{IterativeSolver}{}$ subroutine satisfies the stability guarantee
	\begin{equation} \label{eq:inner-solver-guarantee}
		\norm{ \fl(\Call{IterativeSolver}{\vec{c}}) - (\Rhat^{-\top}\mat{A}^\top\mat{A}\Rhat^{-1})^{-1}\vec{c} } \lesssim \kappa u \cdot \norm{\vec{c}}.
	\end{equation}
    \end{enumerate}
    Then $\vec{x}_1$ computed by \cref{alg:meta-algorithm} is strongly forward stable, and $\vec{x}_2$ is backward stable.
\end{theorem}

The rest of this section will present a proof of this result.
Analysis of SPIR and FOSSILS appears in \cref{sec:FOSSILS-analysis,sec:SPIR-analysis}.
See \cref{sec:generalizing} for a slight generalization of \cref{thm:meta-stability}.
To simplify the presentation, we introduce notation and assumptions which we enforce throughout this section:

\paragraph{Notation and assumptions.}
We remind the reader \rr{of the $\lesssim$ and $\ll$ notations defined in the introduction.}
Vectors $\vec{e}$, $\vec{e}'$, $\vec{e}_1$ $\vec{e}_2$, $\vec{e}_{1,2}$, etc.\ denote arbitrary vectors of norm $\lesssim u$.
We emphasize that we make no attempts to compute or optimize the value of the constants in our proofs; as our numerical experiments demonstrate, SPIR and FOSSILS appear to have similar backward and forward error to \QR in practice.

The numerically computed version of a quantity $\vec{f}$ is denoted by $\fl(\vec{f})$ or $\vec{\hat{f}}$ and the error is $\err(\vec{f}) \coloneqq \vec{\hat{f}} - \vec{f}$.
A vector is \emph{exactly represented} if $\fl(\vec{f}) = \vec{f}$.
We assume throughout that quantities are evaluated in the sensible way, e.g., $\mat{A}\mat{R}^{-1}\vec{z}$ is evaluated as $\mat{A}(\mat{R}^{-1}\vec{z})$.
All triangular solves, e.g., $\mat{R}^{-1}\vec{z}$, are computed by back substitution.

By homogeneity, we are free to normalize the least-squares problem \cref{eq:ls} so that
\begin{equation} \label{eq:normalization}
    \norm{\mat{A}} = \norm{\vec{b}} = 1.
\end{equation}
\textbf{We use this normalization throughout the remainder of this paper.}
The condition number of $\mat{A}$ is $\kappa \coloneqq \cond(\mat{A}) = \sigma_{\rm max}(\mat{A})/\sigma_{\rm min}(\mat{A})$, which we assume satisfies $\kappa u \ll 1$.

\subsection{The non-pathological rounding error assumption} \label{sec:non-pathological}

Wedin's theorem (\cref{fact:wedin}) tells us that a backward stable solution $\vec{\hat{x}}$ obeys the forward error bound
\begin{equation*}
    \norm{\vec{\hat{x}} - \vec{x}} \lesssim \kappa (\norm{\vec{x}} + \kappa\norm{\vec{b}-\mat{A}\vec{x}})u.
\end{equation*}
If the magnitude of the rounding errors agrees with Wedin's theorem and the errors are assumed generic (i.e., random), then we would expect the norm of the computed solution to have size
\begin{equation} \label{eq:non-pathological-rounding}
    \norm{\vec{\hat{x}}} \gtrsim \norm{\vec{x}} + \kappa (\norm{\vec{x}} + \kappa \norm{\vec{b}-\mat{A}\vec{x}})u.
\end{equation}
We call \cref{eq:non-pathological-rounding} the \textbf{non-pathological rounding error assumption}.

We believe the non-pathological rounding error assumption essentially always holds for the output of SPIR or FOSSILS.
In ordinary floating point arithmetic, rounding errors are not random \cite[sec.~1.17]{Hig02}, but they often behave as if they are \cite{HM19}.
If the non-pathological rounding error assumption were to \emph{not} hold, the rounding errors would have to occur in a very particular way to lead to a numerical solution of very small norm.
We have never seen a violation of the non-pathological rounding error assumption in our numerical experiments.
Finally, we note that we have posterior estimates for the backward error (\cref{sec:qa}), so a small backward error for the computed solution can always be certified at runtime at small cost.

Finally, we note one case where the non-pathological rounding error is always true:

\begin{proposition}[Non-pathological rounding errors]
    The non-pathological rounding error assumption \cref{eq:non-pathological-rounding} holds provided that the true solution $\vec{x}$ satisfies $\norm{\vec{x}} \gg \kappa(\norm{\vec{x}} + \kappa\norm{\vec{b}-\mat{A}\vec{x}})u$. 
\end{proposition}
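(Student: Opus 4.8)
The plan is to reduce the proposition to the single estimate $\norm{\vec{\hat{x}}} \gtrsim \norm{\vec{x}}$ — that the computed solution is not anomalously small — and then to obtain this estimate from the forward stability of FOSSILS together with the triangle inequality. To carry out the reduction, observe that under the hypothesis $\norm{\vec{x}} \gg \kappa(\norm{\vec{x}} + \kappa\norm{\vec{b}-\mat{A}\vec{x}})u$ the right-hand side of \cref{eq:non-pathological-rounding} satisfies
\begin{equation*}
    \norm{\vec{x}} + \kappa\bigl(\norm{\vec{x}} + \kappa\norm{\vec{b}-\mat{A}\vec{x}}\bigr)u \le \norm{\vec{x}} + \norm{\vec{x}} = 2\norm{\vec{x}},
\end{equation*}
since by definition of $\gg$ the second summand is at most $\norm{\vec{x}}/C(m,n) \le \norm{\vec{x}}$. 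Hence \cref{eq:non-pathological-rounding} follows as soon as $\norm{\vec{\hat{x}}} \gtrsim \norm{\vec{x}}$, and this is all that remains to be shown.

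To establish $\norm{\vec{\hat{x}}} \gtrsim \norm{\vec{x}}$, I would invoke the forward stability of FOSSILS: the output $\vec{\hat{x}} = \vec{\hat{x}}_2$ of \cref{alg:FOSSILS-basic} obeys Wedin's bound \cref{eq:forward_error} (\cref{fact:wedin}) with $\varepsilon \lesssim u$, so that
\begin{equation*}
    \norm{\vec{\hat{x}} - \vec{x}} \lesssim \kappa\bigl(\norm{\vec{x}} + \kappa\norm{\vec{b}-\mat{A}\vec{x}}\bigr)u \ll \norm{\vec{x}},
\end{equation*}
where the final step is exactly the hypothesis (the arbitrarily large polynomial slack permitted by $\gg$ absorbs the prefactor hidden in $\lesssim$). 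The triangle inequality then gives $\norm{\vec{\hat{x}}} \ge \norm{\vec{x}} - \norm{\vec{\hat{x}} - \vec{x}} \ge \tfrac{1}{2}\norm{\vec{x}} \gtrsim \norm{\vec{x}}$, which completes the argument.

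The one point that requires care is the avoidance of circularity: the forward-error bound \cref{eq:forward_error} for $\vec{\hat{x}}_2$ must be available without assumption (B), since (B) is precisely what we are trying to establish. This should be transparent from the structure of the proof of \cref{thm:fossils-stability}, in which the forward stability of the intermediate iterate $\vec{\hat{x}}_1$ — and hence of $\vec{\hat{x}}_2 = \vec{\hat{x}}_1 + (\text{small correction})$ — is proven using only assumptions (A), (C), and (D), with (B) entering only later when the improved accuracy of $\vec{\hat{x}}_2$ is converted into a backward-error statement via the Karlson--Wald\'en formula (whose value depends on $\norm{\vec{\hat{x}}}$). Beyond this, the proof involves only routine bookkeeping of the polynomial prefactors attached to $\lesssim$ and $\gg$, which poses no difficulty. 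I therefore expect the main ``obstacle'' to be organizational — ordering the lemmas of this section so that forward stability is logically prior to assumption (B) — rather than a genuine mathematical difficulty.
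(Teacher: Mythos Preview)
Your proposal is correct and matches the paper's intended reasoning. The paper does not give an explicit proof of this proposition; it is stated and immediately followed by the interpretive remark that the assumption ``could fail only on problems where the relative forward error of the computed solution could be high $\norm{\vec{\hat{x}}-\vec{x}}/\norm{\vec{x}}\gtrsim 1$,'' which is precisely the contrapositive of your argument. Your observation about circularity is also on point: in the proof of \cref{thm:fossils-stability}, the forward-error bounds for $\vec{\hat{x}}_1$ and $\vec{\hat{x}}_2$ are derived from \cref{lem:error-formula} using only assumptions (A), (C), (D), and assumption (B) is invoked only in the final step to convert $\norm{\vec{x}}+\kappa^2 u\norm{\vec{b}-\mat{A}\vec{x}}$ into $\norm{\vec{\hat{x}}_2}$---so there is no logical cycle.
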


This result tells us that the non-pathological rounding error assumption could fail only on problems where the relative forward error of the computed solution could be high $\norm{\vec{\hat{x}} - \vec{x}} / \norm{\vec{x}}\gtrsim 1$.

\subsection{Stability of sketching}

In this section, we \rrr{prove}{state} an analog of the preconditioning result \cref{fact:whitening} in finite-precision. Its proof can be found in~\cref{app:stability-sketching-proof}.

\begin{lemma}[Stability of sketching] \label{lem:stability-sketching}
	Assume 
 the hypotheses of \cref{thm:meta-stability} 
 and let $\Rhat$ be the computed R-factor of \QR decomposition of $\mat{S}\mat{A}$ computed using Householder \QR.
    Then
    \begin{align}
        \norm{\Rhat^{-1}}  &\le 20\kappa, & \norm{\Rhat} &\le 2, \label{eq:R-whiten-fp} \\
        \frac{1}{1+\eta} - \sigma_{\rm min}(\mat{A}\Rhat^{-1}) &\lesssim \kappa u,& \norm{\smash{\mat{A}\Rhat^{-1}}} - \frac{1}{1-\eta} &\lesssim \kappa u. \label{eq:AR-whiten-fp}
    \end{align}
\end{lemma}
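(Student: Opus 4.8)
The plan is to transfer the exact-arithmetic whitening bounds of \cref{fact:whitening} into floating point by viewing the computed $\Rhat$ as an \emph{exact} triangular factor of a slightly perturbed sketched matrix, and then tracking how an $\order(u)$ perturbation gets amplified by the factor $\norm{\Rhat^{-1}}\asymp\kappa$.

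First I would reduce to an exact near-factorization. By backward stability of Householder \QR \cite[Ch.~19]{Hig02} applied to the computed matrix $\fl(\mat{S}\mat{A})$, there is a matrix $\mat{Q}$ with exactly orthonormal columns such that $\mat{Q}\Rhat = \fl(\mat{S}\mat{A}) + \mat{E}$ with $\norm{\mat{E}}\lesssim\norm{\fl(\mat{S}\mat{A})}\,u$; here the hypothesis $d\le n^3$ is what keeps the prefactors coming from Householder \QR, which are polynomial in $d$ and $n$, polynomial in $n$ and hence absorbed into $\lesssim$. Combining this with the stable-sketch hypothesis \cref{eq:stable-sketch} (which, under the normalization $\norm{\mat{A}}=1$, reads $\norm{\fl(\mat{S}\mat{A})-\mat{S}\mat{A}}\lesssim u$) and the elementary bound $\norm{\mat{S}\mat{A}}\le(1+\eta)\norm{\mat{A}}=1+\eta\le1.9$ from \cref{eq:sketching-matrix}, we obtain $\norm{\fl(\mat{S}\mat{A})}\le2$ and
\begin{equation*}
  \mat{Q}\Rhat = \mat{S}\mat{A} + \mat{\Delta}, \qquad \norm{\mat{\Delta}} \lesssim u .
\end{equation*}

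The norm bounds \cref{eq:R-whiten-fp} for $\Rhat$ then follow quickly. Since $\mat{Q}$ has orthonormal columns, $\Rhat=\mat{Q}^\top(\mat{S}\mat{A}+\mat{\Delta})$, so $\norm{\Rhat}\le\norm{\mat{S}\mat{A}}+\norm{\mat{\Delta}}\le2$; and for a unit vector $\vec{w}$, $\norm{\Rhat\vec{w}}=\norm{\mat{Q}\Rhat\vec{w}}=\norm{(\mat{S}\mat{A}+\mat{\Delta})\vec{w}}\ge(1-\eta)\sigma_{\rm min}(\mat{A})-\norm{\mat{\Delta}}\ge(1-\eta)/\kappa-\order(u)$, using $\norm{\mat{Q}\vec{w}}=\norm{\vec{w}}$ and the lower half of \cref{eq:sketching-matrix}. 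Because $\kappa u\ll1$ and $\eta\le0.9$, the $\order(u)$ term is dominated by $\tfrac12(1-\eta)/\kappa$, hence $\sigma_{\rm min}(\Rhat)\ge0.05/\kappa$ and $\norm{\Rhat^{-1}}\le20\kappa$.

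For the conditioning bounds \cref{eq:AR-whiten-fp} I would rerun the exact-arithmetic reasoning behind \cref{fact:whitening} with the perturbed identity above. For a unit vector $\vec{z}$, put $\vec{w}=\Rhat^{-1}\vec{z}$, so $\norm{\vec{w}}\le\norm{\Rhat^{-1}}\le20\kappa$ and $\Rhat\vec{w}=\vec{z}$; then $\mat{S}\mat{A}\vec{w}=\mat{Q}\Rhat\vec{w}-\mat{\Delta}\vec{w}=\mat{Q}\vec{z}-\mat{\Delta}\vec{w}$, and since $\norm{\mat{Q}\vec{z}}=\norm{\vec{z}}=1$ we get $\norm{\mat{S}\mat{A}\vec{w}}=1+\delta$ with $|\delta|\le\norm{\mat{\Delta}}\norm{\vec{w}}\lesssim\kappa u$. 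The subspace embedding property \cref{eq:sketching-matrix} then gives $1/(1+\eta)-\order(\kappa u)\le\norm{\mat{A}\vec{w}}\le1/(1-\eta)+\order(\kappa u)$, and taking the minimum (resp.\ maximum) of $\norm{\mat{A}\Rhat^{-1}\vec{z}}=\norm{\mat{A}\vec{w}}$ over unit $\vec{z}$ yields $\sigma_{\rm min}(\mat{A}\Rhat^{-1})\ge1/(1+\eta)-\order(\kappa u)$ and $\norm{\mat{A}\Rhat^{-1}}\le1/(1-\eta)+\order(\kappa u)$, which is \cref{eq:AR-whiten-fp}. The one delicate point throughout is the error bookkeeping in this last step: a back-substitution or sketching error of size $\order(u)$ is amplified by $\norm{\Rhat^{-1}}\asymp\kappa$ into the $\order(\kappa u)$ slack, so one must invoke $\kappa u\ll1$ to guarantee this slack stays negligible relative to the exact-arithmetic quantities $1/(1\pm\eta)$ and $(1-\eta)/\kappa$; everything else is a faithful transcription into floating point of the argument behind \cref{fact:whitening}.
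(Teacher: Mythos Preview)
Your proof is correct and follows essentially the same approach as the paper: combine the stable-sketch hypothesis with the backward stability of Householder \QR to obtain an exact factorization $\mat{Q}\Rhat=\mat{S}\mat{A}+\mat{\Delta}$ with $\norm{\mat{\Delta}}\lesssim u$, read off \cref{eq:R-whiten-fp} from $\norm{\Rhat}=\norm{\mat{Q}\Rhat}$ and Weyl-type singular value perturbation, and obtain \cref{eq:AR-whiten-fp} by applying the embedding property to $\mat{S}\mat{A}\Rhat^{-1}=\mat{Q}-\mat{\Delta}\Rhat^{-1}$ and absorbing the $\norm{\mat{\Delta}}\norm{\Rhat^{-1}}\lesssim\kappa u$ slack. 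Your vector-by-vector treatment of \cref{eq:AR-whiten-fp} and your explicit remark on why the hypothesis $d\le n^3$ is needed are nice touches, but the overall argument is the paper's.
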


\subsection{Basic stability results}

We begin by presenting some standard results from rounding error analysis, making use of our ``$\vec{e}$ notation'' introduced at the start of \cref{sec:analysis}.
The first concern stability for addition and multiplication \cite[secs.~2--3]{Hig02}:

\begin{fact}[Basic stability results] \label{fact:basic-stability}
    For exactly representable vectors $\vec{w}$ and $\vec{z}$ in $\real^m$ or $\real^n$ and exactly representable $\gamma \in \real$, we have
    \begin{equation*}
        \norm{\err(\vec{z} \pm \vec{w})}\lesssim \norm{\vec{z} \pm \vec{w}}u, \quad \norm{\err(\gamma\cdot \vec{z})} \lesssim |\gamma| \norm{\vec{z}} u.
    \end{equation*}
    For matrix multiplication by $\mat{A}$ normalized as in \cref{eq:normalization}, we have
    \begin{equation*}
        \norm{\err(\mat{A}\vec{z})} \lesssim \norm{\vec{z}} u, \quad \norm{\err(\mat{A}^\top\vec{z})} \lesssim \norm{\vec{z}} u.
    \end{equation*}
\end{fact}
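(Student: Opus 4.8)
The plan is to reduce each of the four claims to the standard componentwise rounding-error bounds catalogued in \cite[secs.~2--3]{Hig02} and \cite[sec.~3.5]{Hig02}, and then pass from those to the normwise bounds asserted here, absorbing any polynomial factors in $m$ and $n$ into the relation $\lesssim$. The only hypotheses that matter are that $\vec{z}$, $\vec{w}$, $\gamma$, and the stored matrix $\mat{A}$ are exactly representable (so there is no incoming perturbation to amplify) and the normalization \cref{eq:normalization}.

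For the two vector operations, I would invoke the basic model of floating-point arithmetic: each scalar flop satisfies $\fl(a \circ b) = (a\circ b)(1+\delta)$ with $|\delta|\le u$. Applying this entrywise, $\err(\vec{z}\pm\vec{w})_i = (z_i\pm w_i)\delta_i$ and $\err(\gamma\vec{z})_i = \gamma z_i \delta_i'$ with $|\delta_i|,|\delta_i'|\le u$; squaring and summing the entries then gives $\norm{\err(\vec{z}\pm\vec{w})}\le u\norm{\vec{z}\pm\vec{w}}$ and $\norm{\err(\gamma\vec{z})}\le u|\gamma|\norm{\vec{z}}$. No polynomial prefactor is needed here at all; the $\lesssim$ is only for uniformity of statement.

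For the matrix products I would use the classical matrix--vector multiplication bound: the computed product satisfies $\fl(\mat{A}\vec{z}) = \mat{A}\vec{z} + \Delta\vec{y}$ with $|\Delta\vec{y}| \le \gamma_n\,|\mat{A}|\,|\vec{z}|$ entrywise, where $\gamma_n = nu/(1-nu) \lesssim nu$. Taking the Euclidean norm and using $\norm{|\mat{A}|\,|\vec{z}|} \le \norm{\mat{A}}_{\rm F}\norm{\vec{z}}$ together with $\norm{\mat{A}}_{\rm F} \le \sqrt{n}\,\norm{\mat{A}} = \sqrt{n}$ from \cref{eq:normalization}, we get $\norm{\err(\mat{A}\vec{z})} \lesssim \sqrt{n}\,\gamma_n\norm{\vec{z}} \lesssim \norm{\vec{z}}u$. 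The bound for $\mat{A}^\top\vec{z}$ is the identical argument with $\gamma_m$ in place of $\gamma_n$ and with $\norm{\mat{A}^\top}_{\rm F} = \norm{\mat{A}}_{\rm F}$; the extra factor $m$ is again a legitimate polynomial prefactor.

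There is no genuine obstacle here: the statement is just a repackaging of textbook facts. The only point that merits a moment's care is that the classical matrix-multiply guarantee is most naturally a \emph{componentwise} bound, whereas the Fact is stated as a \emph{normwise} one; the bridge is the elementary inequality $\norm{|\mat{A}|\,|\vec{z}|}\le\norm{\mat{A}}_{\rm F}\norm{\vec{z}}$ combined with the normalization $\norm{\mat{A}}=1$, which is exactly what keeps the $\norm{\mat{A}}_{\rm F}$ factor from doing any harm.
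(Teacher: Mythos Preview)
Your proposal is correct and matches the paper's treatment: the paper does not give a proof at all but simply presents this as a Fact with a citation to \cite[secs.~2--3]{Hig02}, and your argument is exactly the standard derivation one finds there. The only thing worth noting is that your write-up is more detailed than what the paper provides, which is fine.
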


We also have the following characterization of the backward stability of triangular solves. \rr{~\cref{prop:backward-triangular} is proved in~\cref{app:backward-triangular-proof}.}

\begin{proposition}[Backward stability of triangular solves] \label{prop:backward-triangular}
    Instate the asssumptions of \rrr{\cref{thm:fossils-stability}}{\cref{thm:meta-stability}} and let $\vec{z}\in\real^n$ be exactly representable. Then    
    \begin{equation*}
        \err(\Rhat^{-1}\vec{z}) = \norm{\Rhat^{-1}\vec{z}} \cdot \Rhat^{-1}\vec{e}, \quad \err(\Rhat^{-\top}\vec{z}) = \norm{\smash{\Rhat^{-\top}\vec{z}}} \cdot \Rhat^{-\top}\vec{e}'. \label{eq:RT_solve}
    \end{equation*}
\end{proposition}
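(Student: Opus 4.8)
The plan is to invoke the classical componentwise backward-error bound for substitution on triangular systems and then repackage it using the bounds on $\Rhat$ from \cref{lem:stability-sketching}. Since $\vec{z}$ is exactly representable, no rounding occurs in forming the right-hand side, so the standard analysis of back substitution (\cite[Thm.~8.5]{Hig02}) guarantees that the computed vector $\vec{\hat{y}} \coloneqq \fl(\Rhat^{-1}\vec{z})$ is the \emph{exact} solution of a perturbed system $(\Rhat + \mat{\Delta R})\vec{\hat{y}} = \vec{z}$, where $\mat{\Delta R}$ obeys a componentwise bound of the form $|\mat{\Delta R}| \le c_n u\,|\Rhat|$. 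Passing to norms and absorbing $c_n$ and the dimensional factors from the componentwise-to-normwise conversion into the $\lesssim$ prefactor, together with $\norm{\Rhat}\le 2$ from \cref{eq:R-whiten-fp}, this yields $\norm{\mat{\Delta R}} \lesssim u$.

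Next I would rearrange. From $(\Rhat + \mat{\Delta R})\vec{\hat{y}} = \vec{z} = \Rhat(\Rhat^{-1}\vec{z})$ we get $\Rhat\bigl(\vec{\hat{y}} - \Rhat^{-1}\vec{z}\bigr) = -\mat{\Delta R}\vec{\hat{y}}$, i.e.\ $\err(\Rhat^{-1}\vec{z}) = -\Rhat^{-1}\bigl(\mat{\Delta R}\vec{\hat{y}}\bigr)$. Discarding the trivial case $\vec{z} = \vec{0}$, it then suffices to show that $\vec{e} \coloneqq -\mat{\Delta R}\vec{\hat{y}}/\norm{\Rhat^{-1}\vec{z}}$ has norm $\lesssim u$. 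For this I would first bound $\norm{\vec{\hat{y}}}$: the displayed identity gives $\norm{\vec{\hat{y}} - \Rhat^{-1}\vec{z}} \le \norm{\Rhat^{-1}}\,\norm{\mat{\Delta R}}\,\norm{\vec{\hat{y}}} \lesssim \kappa u\,\norm{\vec{\hat{y}}}$ by \cref{eq:R-whiten-fp}, and since $\kappa u \ll 1$ this forces $\norm{\vec{\hat{y}}} \le 2\norm{\Rhat^{-1}\vec{z}}$. Hence $\norm{\vec{e}} \le \norm{\mat{\Delta R}}\,\norm{\vec{\hat{y}}}/\norm{\Rhat^{-1}\vec{z}} \lesssim u$, which is exactly the first claim.

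The transposed case is identical: a solve with $\Rhat^{\top}$ is forward substitution, which satisfies the same backward-error guarantee, and the relevant bounds $\norm{\Rhat^{\top}} = \norm{\Rhat} \le 2$ and $\norm{\Rhat^{-\top}} = \norm{\Rhat^{-1}} \le 20\kappa$ match those used above, so the same computation delivers $\err(\Rhat^{-\top}\vec{z}) = \norm{\Rhat^{-\top}\vec{z}}\cdot \Rhat^{-\top}\vec{e}'$ with $\norm{\vec{e}'} \lesssim u$. I do not expect a genuine obstacle here; the only care needed is bookkeeping---making sure the $c_n$ prefactor from \cite[Thm.~8.5]{Hig02}, the componentwise-to-normwise conversion, and the step $\norm{\vec{\hat{y}}} \asymp \norm{\Rhat^{-1}\vec{z}}$ (which relies on $\kappa u \ll 1$ and $\cond(\Rhat) \le 40\kappa$) are each correctly folded into the $\lesssim$ notation, and that the hypothesis ``$\vec{z}$ exactly representable'' is genuinely used to avoid an extra rounding term in the right-hand side.
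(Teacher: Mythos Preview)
Your proposal is correct and follows essentially the same approach as the paper: invoke the backward-stability bound for substitution to get $(\Rhat+\mat{\Delta R})\vec{\hat y}=\vec{z}$ with $\norm{\mat{\Delta R}}\lesssim u$, rearrange so that the error has the form $\Rhat^{-1}(\cdot)$, and use $\kappa u\ll 1$ together with \cref{eq:R-whiten-fp} to bound the bracketed vector by $\norm{\Rhat^{-1}\vec{z}}u$. The only cosmetic difference is that the paper isolates the \emph{exact} solution on the right, obtaining $(\Rhat+\mat{\Delta R})\err(\Rhat^{-1}\vec{z})=-\mat{\Delta R}\,\Rhat^{-1}\vec{z}$ and then inverts $\Rhat+\mat{\Delta R}$ via a Neumann-series factor $(\Id+\mat{\Delta R}\Rhat^{-1})^{-1}$, whereas you isolate the \emph{computed} solution, obtaining $\Rhat\,\err(\Rhat^{-1}\vec{z})=-\mat{\Delta R}\vec{\hat y}$ and then bound $\norm{\vec{\hat y}}\le 2\norm{\Rhat^{-1}\vec{z}}$; both routes rely on exactly the same ingredients and yield the same conclusion.
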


By combining \cref{fact:basic-stability,prop:backward-triangular}, we can analyze the effect of multiplying by $\mat{A}\Rhat^{-1}$ and variations thereof:

\begin{proposition}[Multiplication by whitened basis] \label{prop:whitened-stability}
   Instate the asssumptions of \rrr{\cref{thm:fossils-stability}}{\cref{thm:meta-stability}} and normalization \cref{eq:normalization}.
   For an exactly representable vector $\vec{z}$ of the appropriate size,
    \begin{align} 
        \err(\Rhat^{-\top}\mat{A}^\top\vec{z}) &= \norm{\vec{z}}\cdot \Rhat^{-\top}\vec{e}, \label{eq:WT-analysis} \\
        \norm{\err(\Rhat^{-\top}\mat{A}^\top\mat{A}\Rhat^{-1}\vec{z})} &\lesssim \kappa u \norm{\vec{z}}.\label{eq:M-analysis}
    \end{align}
\end{proposition}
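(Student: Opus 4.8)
The plan is to analyze each of the two composite operations step by step, applying \cref{fact:basic-stability} to every matrix--vector product and scaling, \cref{prop:backward-triangular} to every triangular solve with $\Rhat$ or $\Rhat^\top$, and repeatedly using the norm estimates from \cref{lem:stability-sketching}: $\norm{\Rhat^{-1}} = \norm{\Rhat^{-\top}} \le 20\kappa$, $\norm{\mat{A}\Rhat^{-1}} = \norm{\Rhat^{-\top}\mat{A}^\top} \lesssim 1$, and consequently $\norm{\Rhat^{-\top}\mat{A}^\top\mat{A}\Rhat^{-1}} = \norm{(\mat{A}\Rhat^{-1})^\top(\mat{A}\Rhat^{-1})} \lesssim 1$. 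The standing assumption $\kappa u \ll 1$ is used throughout to absorb $\order(\kappa u)$ corrections into $\order(1)$ factors.

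For \cref{eq:WT-analysis}, the computed value is $\fl(\Rhat^{-\top}(\fl(\mat{A}^\top\vec{z})))$. I would set $\vec{\hat{w}} \coloneqq \fl(\mat{A}^\top\vec{z}) = \mat{A}^\top\vec{z} + \norm{\vec{z}}\vec{e}$ by \cref{fact:basic-stability} and then, by \cref{prop:backward-triangular}, $\fl(\Rhat^{-\top}\vec{\hat{w}}) = \Rhat^{-\top}\vec{\hat{w}} + \norm{\Rhat^{-\top}\vec{\hat{w}}}\,\Rhat^{-\top}\vec{e}'$. Subtracting the exact value $\Rhat^{-\top}\mat{A}^\top\vec{z}$ collects the error as $\Rhat^{-\top}$ applied to $\norm{\vec{z}}\vec{e} + \norm{\Rhat^{-\top}\vec{\hat{w}}}\vec{e}'$. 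Bounding $\norm{\Rhat^{-\top}\vec{\hat{w}}} \le \norm{\Rhat^{-\top}\mat{A}^\top}\,\norm{\vec{z}} + \norm{\Rhat^{-\top}}\,\norm{\vec{z}}u \lesssim \norm{\vec{z}}$ — the $\kappa$ from $\norm{\Rhat^{-\top}}$ being killed by the accompanying $u$ — shows the bracketed vector has norm $\lesssim \norm{\vec{z}}u$, which is precisely the assertion $\err(\Rhat^{-\top}\mat{A}^\top\vec{z}) = \norm{\vec{z}}\cdot\Rhat^{-\top}\vec{e}$.

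For \cref{eq:M-analysis}, the computation is $\vec{z} \mapsto \Rhat^{-1}\vec{z} \mapsto \mat{A}(\cdot) \mapsto \mat{A}^\top(\cdot) \mapsto \Rhat^{-\top}(\cdot)$, and I would carry along the running iterate together with its deviation from the corresponding exact partial product. The first solve gives $\vec{\hat{u}}_1 = \Rhat^{-1}\vec{z} + \norm{\Rhat^{-1}\vec{z}}\,\Rhat^{-1}\vec{e}_1$ with $\norm{\vec{\hat{u}}_1} \lesssim \kappa\norm{\vec{z}}$ — a $\kappa$-sized quantity — but multiplying by $\mat{A}$ tames it: $\mat{A}$ applied to the error is $\mat{A}\Rhat^{-1}(\norm{\Rhat^{-1}\vec{z}}\vec{e}_1)$, of norm $\lesssim \kappa\norm{\vec{z}}u$, and the product's own rounding error is $\lesssim \norm{\vec{\hat{u}}_1}u \lesssim \kappa\norm{\vec{z}}u$, so $\fl(\mat{A}\vec{\hat{u}}_1) = \mat{A}\Rhat^{-1}\vec{z} + \vec{r}_2$ with $\norm{\vec{r}_2} \lesssim \kappa\norm{\vec{z}}u$ and $\norm{\fl(\mat{A}\vec{\hat{u}}_1)} \lesssim \norm{\vec{z}}$. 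Applying $\mat{A}^\top$ then produces $\vec{\hat{u}}_3 = \mat{A}^\top\mat{A}\Rhat^{-1}\vec{z} + \vec{r}_3$ where $\vec{r}_3 = \mat{A}^\top\vec{r}_2 + \vec{\delta}_3$ with $\norm{\vec{\delta}_3} \lesssim \norm{\vec{z}}u$, and $\norm{\vec{\hat{u}}_3}\lesssim\norm{\vec{z}}$. The final solve by $\Rhat^{-\top}$ contributes two terms: $\Rhat^{-\top}\vec{r}_3$ — which I bound by $\order(\kappa\norm{\vec{z}}u)$ by treating the $\mat{A}^\top\vec{r}_2$ piece via $\norm{\Rhat^{-\top}\mat{A}^\top}\lesssim 1$ and the $\vec{\delta}_3$ piece via $\norm{\Rhat^{-\top}}\le 20\kappa$ times $\order(\norm{\vec{z}}u)$ — plus the solve's own backward-error term $\norm{\Rhat^{-\top}\vec{\hat{u}}_3}\,\Rhat^{-\top}\vec{e}_4$, bounded using $\norm{\Rhat^{-\top}\vec{\hat{u}}_3}\lesssim\norm{\vec{z}}$ and $\norm{\Rhat^{-\top}}\le 20\kappa$. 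Summing gives $\norm{\err(\Rhat^{-\top}\mat{A}^\top\mat{A}\Rhat^{-1}\vec{z})} \lesssim \kappa u\,\norm{\vec{z}}$.

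The hard part will be the disciplined bookkeeping of which factors of $\norm{\Rhat^{-1}} = \order(\kappa)$ are allowed to appear: an $\order(\kappa)$ factor is harmless exactly when it multiplies a quantity that already carries a factor $u$ (so the product is $\order(\kappa u) \ll 1$), but one must recognize $\Rhat^{-\top}\mat{A}^\top$ and $\Rhat^{-\top}\mat{A}^\top\mat{A}\Rhat^{-1}$ as the $\order(1)$-norm matrices $(\mat{A}\Rhat^{-1})^\top$ and $(\mat{A}\Rhat^{-1})^\top(\mat{A}\Rhat^{-1})$ rather than crudely bounding them by $\order(\kappa)$ and $\order(\kappa^2)$; in particular $\Rhat^{-\top}\vec{r}_3$ must be split as above rather than bounded by $\norm{\Rhat^{-\top}}\norm{\vec{r}_3} = \order(\kappa^2\norm{\vec{z}}u)$. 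Getting this grouping right at every step is what delivers the final $\order(\kappa u)$ bound instead of a weaker $\order(\kappa^2 u)$ one.
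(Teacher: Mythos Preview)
Your proposal is correct and follows essentially the same approach as the paper: apply \cref{fact:basic-stability} to each matrix product, \cref{prop:backward-triangular} to each triangular solve, and exploit $\norm{\mat{A}\Rhat^{-1}}\lesssim 1$ (rather than the crude $\norm{\Rhat^{-1}}\lesssim\kappa$) together with $\kappa u\ll 1$ to keep all terms $\order(\kappa u\norm{\vec{z}})$. The only organizational difference is that the paper reuses the already-proved \cref{eq:WT-analysis} to handle the last two operations of \cref{eq:M-analysis} in one stroke, whereas you unroll all four stages explicitly---but the underlying estimates and the key ``grouping'' insight you highlight are identical.
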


\begin{proof}
    We begin by proving \cref{eq:WT-analysis}. 
    By \cref{fact:basic-stability}, we have
    \begin{equation*}
        \err(\mat{A}^\top \vec{z}) = \norm{\vec{z}} \cdot \vec{e}_1.
    \end{equation*}
    By \cref{prop:backward-triangular}, we obtain
    \begin{equation} \label{eq:WT-multiply-error}
        \err(\Rhat^{-\top}\mat{A}^\top\vec{z}) = \norm{\vec{z}} \cdot \Rhat^{-\top}\vec{e}_1 + \norm{\Rhat^{-\top}(\mat{A}^\top \vec{z} + \norm{\vec{z}} \cdot \vec{e}_1)} \cdot \Rhat^{-\top}\vec{e}_2.
    \end{equation}
    By the triangle inequality, the submultiplicative property, and \cref{lem:stability-sketching}, we have
    \begin{equation*}
        \norm{\Rhat^{-\top}\left(\mat{A}^\top \vec{z} + \norm{\vec{z}} \cdot \vec{e}_1\right)} \le \left(\norm{\mat{A}\Rhat^{-1}}+\norm{\Rhat^{-\top}\vec{e}_1}\right) \norm{\vec{z}} \lesssim (1 + \kappa u)\norm{\vec{z}}\lesssim\norm{\vec{z}}.
    \end{equation*}
    This establishes that the right-hand side of \cref{eq:WT-multiply-error} has the form $\norm{\vec{z}}\cdot \Rhat^{-\top}\vec{e}$, proving \cref{eq:WT-analysis}.

    We now prove \cref{eq:M-analysis}.
    By \cref{prop:backward-triangular},
    \begin{equation*}
        \err(\Rhat^{-1}\vec{z}) = \norm{\Rhat^{-1}\vec{z}} \cdot \Rhat^{-1}\vec{e}_3 = \kappa \norm{\vec{z}} \cdot \Rhat^{-1}\vec{e}_4.
    \end{equation*}
    In the second equality, we used the fact that $\norm{\smash{\Rhat^{-1}\vec{z}}}\lesssim \kappa \norm{\vec{z}}$ by \cref{lem:stability-sketching}.
    By \cref{fact:basic-stability}, we have
    \begin{equation*}
        \err(\mat{A}\Rhat^{-1}\vec{z}) = \kappa \norm{\vec{z}} \cdot \mat{A}\Rhat^{-1}\vec{e}_4 + \norm{\Rhat^{-1}(\vec{z} + \kappa \norm{\vec{z}} \cdot \vec{e}_4)} \cdot \vec{e}_5.
    \end{equation*}
    By \cref{lem:stability-sketching}, we have $\norm{\mat{A}\Rhat^{-1}\vec{e}_4} \lesssim u$ and 
    \begin{equation*}
        \norm{\Rhat^{-1}(\vec{z} + \kappa \norm{\vec{z}} \cdot \vec{e}_4)} \le \sigma_{\rm min}(\Rhat)^{-1} (1 + \kappa \norm{\vec{e}_4}) \norm{\vec{z}} \lesssim \kappa \norm{\vec{z}}.
    \end{equation*}
    Here, we used the assumption that $\kappa \norm{\vec{e}_4} \lesssim \kappa u \lesssim 1$.
    Thus, we have shown $\norm{\err(\mat{A}\Rhat^{-1}\vec{z})} \lesssim \kappa u \norm{\vec{z}}$.
    Combining with \cref{eq:WT-analysis}, we obtain
    \begin{align*}
        &\norm{\err(\Rhat^{-\top}\mat{A}^\top \mat{A}\Rhat^{-1}\vec{z})} 
        = \norm{ \norm{\mat{A}\Rhat^{-1}\vec{z} + \err(\mat{A}\Rhat^{-1}\vec{z})} \cdot \Rhat^{-\top}\vec{e}_6 + \Rhat^{-\top}\mat{A}^\top \err(\mat{A}\Rhat^{-1}\vec{z})} \\
        &\qquad\le (\norm{\mat{A}\Rhat^{-1}}\norm{\vec{z}} + \norm{\err(\mat{A}\Rhat^{-1}\vec{z})}) \norm{\smash{\Rhat^{-\top}}}\norm{\vec{e}_6} + \norm{\Rhat^{-\top}\mat{A}^\top} \norm{\err(\mat{A}\Rhat^{-1}\vec{z})} \lesssim \kappa u\norm{\vec{z}}.
    \end{align*}
    The final inequality uses \cref{lem:stability-sketching} and the bound $\norm{\err(\mat{A}\Rhat^{-1}\vec{z})} \lesssim \kappa u \norm{\vec{z}}$.
\end{proof}

\subsection{The error formula}

We shall provide backward stability using \cref{cor:proving-backward}.
To do so, we establish an error formula of the form \cref{eq:backward-error-from-formula} for \cref{alg:meta-algorithm}:

\begin{lemma}[Error formula] \label{lem:error-formula}
    Instate the assumptions of \rrr{\cref{thm:fossils-stability}}{\cref{thm:meta-stability}} and define $\vec{r}_i \coloneqq \vec{b} - \mat{A}\vec{\hat{x}}_i$.
    Then we have the following error formula
    \begin{equation*} \label{eq:error-formula}
        \vec{x} - \vec{\hat{x}}_{i+1} = \left(1+\norm{\vec{x}} + \norm{\vec{\hat{x}}_i} + \kappa^2 u \norm{\vec{r}_i} + \kappa \norm{\mat{A}(\vec{x}-\vec{\hat{x}}_i)} \right) \cdot \Rhat^{-1}\vec{e} + \norm{\vec{r}_i} \cdot (\mat{A}^\top \mat{A})^{-1} \vec{e}'.
    \end{equation*}
\end{lemma}

\begin{proof}
    We proceed step by step through \cref{alg:meta-algorithm}. 
    By \cref{fact:basic-stability}, we have
    \begin{equation*}
        \err(\vec{b} - \mat{A}\vec{\hat{x}}_i) = (\norm{\vec{r}_i} + \norm{\vec{\hat{x}}_i})\vec{e}_1.
    \end{equation*}
    In particular, since
    \begin{equation} \label{eq:r-norm-bound}
       \norm{\vec{r}_i} \le \norm{\vec{b}} + \norm{\mat{A}}\norm{\vec{\hat{x}}_i} = 1 + \norm{\vec{\hat{x}}_i},
    \end{equation}
    the norm of the computed residual satisfies 
    \begin{equation} \label{eq:comp-r-norm-bound}
        \norm{\vec{\hat{r}}_i} \lesssim \norm{\vec{r}_i} + (\norm{\vec{r}_i} + \norm{\vec{\hat{x}}_i})u \lesssim \norm{\vec{r}_i} + (1 + \norm{\vec{\hat{x}}_i})u.
    \end{equation}
    Let $\vec{c}_i = \Rhat^{-\top} \mat{A}^\top \vec{r}_i$.
    By \cref{prop:whitened-stability},
    \begin{equation*}
        \err(\vec{c}_i) = \err(\Rhat^{-\top}\mat{A}^\top\vec{r}_i) = (\norm{\vec{r}_i} + \norm{\vec{x}_i}) \cdot \Rhat^{-\top}\mat{A}^\top\vec{e}_1 + \norm{\vec{\hat{r}}_i} \cdot \Rhat^{-\top} \vec{e}_2.
    \end{equation*}
    Using \cref{eq:AR-whiten-fp}, \cref{eq:r-norm-bound}, and \cref{eq:comp-r-norm-bound}, this result simplifies to
    \begin{equation*}
        \err(\vec{c}_i) = (1 + \norm{\vec{\hat{x}}_i}) \vec{e}_3 + [\norm{\vec{r}_i}+(1 + \norm{\vec{\hat{x}}_i})u] \Rhat^{-\top} \vec{e}_4.
    \end{equation*}
    By \cref{eq:R-whiten-fp}, $\norm{\Rhat^{-\top} \vec{e}_4} \lesssim \kappa u \ll 1$, so we can further consolidate
    \begin{equation} \label{eq:err-c}
        \err(\vec{c}_i) = (1 + \norm{\vec{\hat{x}}_i}) \vec{e}_5 + \norm{\vec{r}_i}\cdot \Rhat^{-\top} \vec{e}_4.
    \end{equation}
    By assumption (C), the value of
    \begin{equation} \label{eq:y}
        \vec{\delta y}_{i} = (\Rhat^{-\top}\mat{A}^\top\mat{A}\Rhat^{-1})^{-1}\vec{c}_i = \Rhat (\mat{A}^\top \mat{A})^{-1} \mat{A}^\top \vec{r}_i = \Rhat(\vec{x} - \vec{\hat{x}}_i).
    \end{equation}
    computed by the \Call{IterativeSolver}{} subroutine satisfies
    \begin{equation*}
        \norm{\vec{\hat{\delta y}}_i - (\Rhat^{-\top}\mat{A}^\top\mat{A}\Rhat^{-1})^{-1}\vec{\hat{c}}_i} \lesssim \kappa u \norm{\vec{\hat{c}}_i} \le \kappa u \norm{\vec{c}_i} + \kappa u \norm{\err(\vec{c}_i)}.
    \end{equation*}
    We compute the two norms in the right-hand side of this expression.
    First, in view of the identity $\mat{A}^\top \vec{r}_i = \mat{A}^\top \mat{A}(\vec{x} - \vec{\hat{x}}_i)$, we compute
    \begin{equation} \label{eq:c-norm}
        \norm{\vec{c}_i} = \norm{\Rhat^{-\top}\mat{A}^\top \mat{A}(\vec{x} - \vec{\hat{x}}_i)} \lesssim \norm{\mat{A}(\vec{x} - \vec{\hat{x}}_i)}.
    \end{equation}
    The last line is \cref{eq:AR-whiten-fp}.
    We bound $\norm{\err(\vec{c}_i)}$ using \cref{eq:err-c}, the triangle inequality, and \cref{eq:R-whiten-fp},
    \begin{equation*}
        \kappa u\norm{\err(\vec{c}_i)} \lesssim \kappa u(1 + \norm{\vec{\hat{x}}_i})u  + \kappa u\norm{\vec{r}_i} \cdot \norm{\Rhat^{-\top}\vec{e}_4} \lesssim \left[1 + \norm{\vec{\hat{x}}_i}  + \kappa^2 u\norm{\vec{r}_i}\right]u.
    \end{equation*}
    Combining the three previous displays, we get
    \begin{equation*}
        \norm{\vec{\hat{\delta y}}_i - (\Rhat^{-\top}\mat{A}^\top\mat{A}\Rhat^{-1})^{-1}\vec{\hat{c}}_i} \lesssim \left[1 + \norm{\vec{\hat{x}}_i}  + \kappa^2 u\norm{\vec{r}_i} + \kappa \norm{\mat{A}(\vec{x} - \vec{x}_i)}\right]u.
    \end{equation*}
    Therefore, we have
    \begin{equation*}
        \err(\vec{\delta y}_{i}) = \left[1 + \norm{\vec{\hat{x}}_i}  + \kappa^2 u\norm{\vec{r}_i} + \kappa \norm{\mat{A}(\vec{x} - \vec{\hat{x}}_i)}\right]\vec{e}_6 + (\Rhat^{-\top} \mat{A}^\top \mat{A} \Rhat^{-1})^{-1} \err(\vec{c}_i).
    \end{equation*}
    Substituting \cref{eq:err-c} and simplifying, we obtain
    \begin{equation} \label{eq:err-y}
        \err(\vec{\delta y}_{i}) = \left[(1 + \norm{\vec{\hat{x}}_i})  + \kappa^2 u\norm{\vec{r}_i} + \kappa \norm{\mat{A}(\vec{x} - \vec{\hat{x}}_i)}\right]\vec{e}_7 + \norm{\vec{r}_i} \cdot \Rhat (\mat{A}^\top\mat{A})^{-1} \vec{e}_5.
    \end{equation}
    Now we treat $\Rhat^{-1}\, \vec{\delta y}_{i}$.
    By \cref{prop:backward-triangular}, we have
    \begin{equation*}
        \err(\Rhat^{-1}\,\vec{\delta y}_{i}) = \norm{\Rhat^{-1}\,\vec{\hat{\delta y}}_i} \cdot \Rhat^{-1} \vec{e}_8 + \Rhat^{-1} \err(\vec{\hat{\delta y}}_i).
    \end{equation*}
    Using \cref{eq:y,eq:err-c}, we may bound $\norm{\Rhat^{-1}\vec{\hat{\delta y}}_i}$ as
    \begin{align*}
        \norm{\Rhat^{-1}\,\vec{\hat{\delta y}}_i} &\le \norm{\Rhat^{-1}\, \vec{\delta y}_{i}} + \norm{\Rhat^{-1}\err(\vec{\delta y}_{i})} \lesssim \norm{\vec{x} - \vec{\hat{x}}_i} + \left[1 + \norm{\vec{\hat{x}}_i} + \kappa \norm{\mat{A}(\vec{x} - \vec{x}_i)}\right]u  + \kappa^2 u\norm{\vec{r}_i} \\
        &\lesssim \norm{\vec{x}} + \norm{\vec{\hat{x}}} + \left[1 + \kappa \norm{\mat{A}(\vec{x} - \vec{x}_i) } + \kappa^2 \norm{\vec{r}_i}\right]u
    \end{align*}
    Combining the two previous displays and \cref{eq:err-y} then simplifying, we obtain
    \begin{equation*}
        \err(\Rhat^{-1}\, \vec{\delta y}_{i}) = \left( 1 + \norm{\vec{\hat{x}}_i} + \norm{\vec{x}} + \kappa^2 u \norm{\vec{r}_i} + \kappa \norm{\mat{A}(\vec{x} - \vec{\hat{x}}_i)} \right) \Rhat^{-1}\vec{e}_9 + \norm{\vec{r}_i} \cdot (\mat{A}^\top\mat{A})^{-1}\vec{e}_5.
    \end{equation*}
    Finally, using the bound invoking \cref{fact:basic-stability} for the addition $\vec{x} = \vec{\hat{x}}_i + \Rhat^{-1}\, \vec{\delta y}_{i}$ gives the formula \cref{eq:error-formula}.
\end{proof}

\subsection{\rr{A sufficient condition for backward stability}}

\rr{To prove complete the proof of backward stability, we will use the following result, which follows as a corollary of \cref{thm:backward-componentwise}:}

\begin{corollary}[Proving backward stability] \label{cor:proving-backward}
    Let $\Rhat$ denote the numerically computed preconditioner in \cref{eq:sketch_qr}.
    Then $\vec{\hat{x}}$ is a backward stable solution to \cref{eq:ls} if it can be written in the form 
    \begin{equation} \label{eq:backward-error-from-formula}
        \vec{x} - \vec{\hat{x}} = (1 + \norm{\vec{\hat{x}}}) \cdot \Rhat^{-1}\vec{f} + \norm{\vec{b} - \mat{A}\vec{\hat{x}}} \cdot (\mat{A}^\top \mat{A})^{-1}\vec{f}' \quad \text{for } \norm{\vec{f}},\norm{\vec{f}'} \lesssim u.
    \end{equation}
\end{corollary}

\Cref{cor:proving-backward} is proven in \cref{app:proving-backward}.

\subsection{Completing the proof}

With the error formula \cref{eq:error-formula} in hand, we present a proof of \cref{thm:meta-stability}.
We will make use of the following result for sketch-and-solve, which follows directly from \cite[Lem.~8]{Epp24} \rr{and the standing assumption that $\kappa u \ll 1$}:

\begin{fact}[Sketch-and-solve: Finite precision] \label{fact:sketch-solve-numerical}
    The numerically computed sketch-and-solve solution $\vec{\hat{x}}_0$ satisfies:
    \begin{equation*}
        \norm{\mat{A}(\vec{x} - \vec{\hat{x}}_0)} \lesssim \norm{\vec{b} - \mat{A}\vec{x}} + \norm{\vec{x}}u.
    \end{equation*}
\end{fact}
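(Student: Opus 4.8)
The plan is to route the error through the exact sketch-and-solve solution $\vec{x}_{\rm sk} \coloneqq (\mat{S}\mat{A})^+(\mat{S}\vec{b})$ and split
$\norm{\mat{A}(\vec{x} - \vec{\hat{x}}_0)} \le \norm{\mat{A}(\vec{x} - \vec{x}_{\rm sk})} + \norm{\mat{A}(\vec{x}_{\rm sk} - \vec{\hat{x}}_0)}$,
bounding the first term in exact arithmetic and the second using the backward stability of Householder \QR together with Wedin's theorem. Throughout I use the standard fact that, with high probability, $\mat{S}$ is simultaneously an embedding for $\operatorname{range}(\mat{A})$ and a constant-distortion embedding for $\operatorname{range}(\onebytwo{\mat{A}}{\vec{b}})$ (true for the sparse sign embeddings of \cref{eq:cohen} applied with $n+1$ columns), so that $\norm{\mat{S}(\vec{b} - \mat{A}\vec{z})} \asymp \norm{\vec{b} - \mat{A}\vec{z}}$ for every $\vec{z}$ and $\norm{\mat{S}\vec{b}} \asymp 1$; this is exactly the setting of the sketch-and-solve guarantee in \cref{sec:sketch-and-solve}.

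For the exact-arithmetic term, write $\vec{b} = \mat{A}\vec{x} + \vec{g}$ with $\vec{g} \coloneqq \vec{b} - \mat{A}\vec{x} \perp \operatorname{range}(\mat{A})$. Since $\mat{S}\mat{A}$ has full column rank, $\vec{x}_{\rm sk} - \vec{x} = (\mat{S}\mat{A})^+(\mat{S}\vec{g})$, so using $\norm{\mat{A}\vec{y}} \le (1-\eta)^{-1}\norm{\mat{S}\mat{A}\vec{y}}$ and the fact that $\mat{S}\mat{A}(\mat{S}\mat{A})^+$ is an orthogonal projector, $\norm{\mat{A}(\vec{x} - \vec{x}_{\rm sk})} \lesssim \norm{\mat{S}\vec{g}} \lesssim \norm{\vec{b} - \mat{A}\vec{x}}$. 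The same identity, combined with $\sigma_{\rm min}(\mat{S}\mat{A}) \ge (1-\eta)/\kappa$, records two side bounds I will need: $\norm{\vec{x}_{\rm sk} - \vec{x}} \lesssim \kappa\,\norm{\vec{b} - \mat{A}\vec{x}}$ (hence $\norm{\vec{x}_{\rm sk}} \le \norm{\vec{x}} + \order(\kappa\norm{\vec{b} - \mat{A}\vec{x}})$), and $\norm{\mat{S}\vec{b} - \mat{S}\mat{A}\vec{x}_{\rm sk}} \le \norm{\mat{S}\vec{g}} \lesssim \norm{\vec{b} - \mat{A}\vec{x}}$ (the last because $\vec{x}_{\rm sk}$ minimizes the sketched residual).

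For the rounding-error term, I would first observe that $\vec{\hat{x}}_0$ is the \emph{exact} sketch-and-solve solution for perturbed data: combining assumption \cref{eq:stable-sketch} (and its routine analogue $\norm{\fl(\mat{S}\vec{b}) - \mat{S}\vec{b}} \lesssim u$) with the backward stability of Householder \QR-based least-squares \cite[Thm.~20.3]{Hig02}, there exist $\mat{E}'$ and $\vec{\delta}'$ of norm $\lesssim u$ with $\vec{\hat{x}}_0 = (\mat{S}\mat{A} + \mat{E}')^+(\mat{S}\vec{b} + \vec{\delta}')$, and $\mat{S}\mat{A} + \mat{E}'$ has full column rank because $\norm{\mat{E}'} \lesssim u \ll \sigma_{\rm min}(\mat{S}\mat{A})$ by $\kappa u \ll 1$. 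I then apply Wedin's theorem (\cref{fact:wedin}) to the sketched problem $\min_{\vec{z}}\norm{\mat{S}\vec{b} - \mat{S}\mat{A}\vec{z}}$: its solution is $\vec{x}_{\rm sk}$, its matrix has norm $\le 1+\eta$ and condition number $\cond(\mat{S}\mat{A}) \le \tfrac{1+\eta}{1-\eta}\kappa$, it is well-posed since $\kappa u \ll 1$, and the relative perturbation size is $\varepsilon \lesssim u$. The residual-error bound \cref{eq:residual_error} gives $\norm{\mat{S}\mat{A}(\vec{x}_{\rm sk} - \vec{\hat{x}}_0)} \lesssim (\norm{\vec{x}_{\rm sk}} + \kappa\norm{\mat{S}\vec{b} - \mat{S}\mat{A}\vec{x}_{\rm sk}})\,u$; substituting the side bounds above and using $\kappa u \ll 1$ to collapse every $\kappa u\,\norm{\vec{b} - \mat{A}\vec{x}}$ term into $\norm{\vec{b} - \mat{A}\vec{x}}$ yields $\norm{\mat{S}\mat{A}(\vec{x}_{\rm sk} - \vec{\hat{x}}_0)} \lesssim \norm{\vec{x}}u + \norm{\vec{b} - \mat{A}\vec{x}}$, and passing back through $\norm{\mat{A}\vec{y}} \le (1-\eta)^{-1}\norm{\mat{S}\mat{A}\vec{y}}$ with $\vec{y} = \vec{x}_{\rm sk} - \vec{\hat{x}}_0$ gives the same bound for $\norm{\mat{A}(\vec{x}_{\rm sk} - \vec{\hat{x}}_0)}$. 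Adding the two contributions completes the proof; a stray additive $u$ is harmless since $\norm{\vec{b}} = 1$ forces $1 \lesssim \norm{\vec{x}} + \norm{\vec{b} - \mat{A}\vec{x}}$.

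The main obstacle is controlling the factors of $\kappa$: both $\norm{(\mat{S}\mat{A})^+} \asymp \kappa$ and $\cond(\mat{S}\mat{A}) \asymp \kappa$ enter the perturbation estimates, and with these factors surviving the stated bound would fail. The resolution—and the reason the argument works—is structural: every bare $\kappa$ appears multiplied by $u$ and acting only on the residual $\norm{\vec{b} - \mat{A}\vec{x}}$, so $\kappa u \ll 1$ annihilates it, whereas the $\norm{\vec{x}}$-dependence is never amplified by $\kappa$, which is precisely what produces the clean form $\norm{\vec{b} - \mat{A}\vec{x}} + \norm{\vec{x}}u$. A secondary point, flagged in the first paragraph, is that the FOSSILS sketch is drawn only for $\operatorname{range}(\mat{A})$, so every manipulation involving $\vec{b}$ relies on $\mat{S}$ also being a constant-distortion embedding for the single extra vector $\vec{b}$, which holds with high probability under the same construction. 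This argument is essentially \cite[Lem.~8]{Epp24}.
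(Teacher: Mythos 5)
Your proof is correct, and it is essentially the argument that the paper outsources rather than writes out: the paper offers no proof of \cref{fact:sketch-solve-numerical}, deferring to \cite[Lem.~8]{Epp24}, and your route---splitting through the exact sketch-and-solve solution $\vec{x}_{\rm sk}=(\mat{S}\mat{A})^+(\mat{S}\vec{b})$, bounding the exact-arithmetic term with the embedding property and the orthogonal projector $\mat{S}\mat{A}(\mat{S}\mat{A})^+$, and treating the rounding term by realizing $\vec{\hat{x}}_0$ as the exact solution of a perturbed sketched problem (forward-stable sketch application plus backward stability of Householder \QR, \cite[Thm.~20.3]{Hig02}) and then invoking the residual bound \cref{eq:residual_error} of Wedin's theorem, with $\kappa u \ll 1$ absorbing every $\kappa u\,\norm{\vec{b}-\mat{A}\vec{x}}$ term---is exactly the expected derivation of that cited lemma. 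The only point to make explicit is the one you already flag: hypothesis (C) of \cref{thm:fossils-stability} asks only that $\mat{S}$ embed $\range(\mat{A})$, whereas your bounds $\norm{\mat{S}(\vec{b}-\mat{A}\vec{x})}\lesssim\norm{\vec{b}-\mat{A}\vec{x}}$ and $\norm{\mat{S}\vec{b}}\asymp 1$ (the latter needed so the \QR backward error and $\fl(\mat{S}\vec{b})$ error qualify as relative perturbations of size $\lesssim u$ in \cref{fact:wedin}) use the embedding property for the augmented matrix $\onebytwo{\mat{A}}{\vec{b}}$, which is how sketch-and-solve is set up in \cref{sec:sketch-and-solve} and in the hypotheses of the cited lemma, so stating it as an explicit (high-probability) assumption, as you do, is the right fix; under the paper's polynomial-prefactor convention the first of these could instead be obtained from the crude bound $\norm{\mat{S}}\le\sqrt{m}$ for a sparse sign embedding, but the lower bound on $\norm{\mat{S}\vec{b}}$ genuinely wants the augmented embedding. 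No gaps otherwise.
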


\begin{proof}[Proof of\cref{thm:meta-stability}]
    Let us first consider the first iterate $\vec{\hat{x}}_1$.
    By \cref{lem:error-formula}, we have
    \begin{equation*}
        \vec{x} - \vec{\hat{x}}_1 = \left(1 + \norm{\vec{x}} + \norm{\vec{\hat{x}}_0} + \kappa^2 u \norm{\vec{r}_0} + \kappa \norm{\mat{A}(\vec{x}-\vec{\hat{x}}_0)} \right) \cdot \Rhat^{-1}\vec{e}_1 + \norm{\vec{r}_0} \cdot (\mat{A}^\top \mat{A})^{-1} \vec{e}_2,
    \end{equation*}
    where we continue to use the shorthand $\vec{r}_i \coloneqq \vec{b} - \mat{A}\vec{x}_i$.
    To bound $\norm{\vec{r}_0}$, we use \cref{fact:sketch-solve-numerical} and the Pythagorean theorem:
    \begin{equation*}
        \norm{\vec{r}_0} = \left( \norm{\vec{b} - \mat{A}\vec{x}}^2 + \norm{\mat{A}(\vec{x} - \vec{\hat{x}}_0)}^2 \right)^{1/2} \lesssim \norm{\vec{b} - \mat{A}\vec{x}} + \norm{\vec{x}} u.
    \end{equation*}
    To bound $\norm{\vec{x}_0}$, we use \cref{fact:sketch-solve-numerical}, the triangle inequality, and the fact that $\sigma_{\rm min}(\mat{A}) = 1/\kappa$:
    \begin{equation*}
        \norm{\vec{\hat{x}}_0} \le \norm{\vec{x}} + \norm{\vec{x} - \vec{\hat{x}}_0} \le \norm{\vec{x}} + \kappa \norm{\mat{A}(\vec{x} - \vec{x}_0)} \lesssim \norm{\vec{x}} + \kappa \norm{\vec{b} - \mat{A}\vec{x}}.
    \end{equation*}
    Further, we bound $1$ by the triangle inequality:
    \begin{equation*}
        1 = \norm{\vec{b}} \le \norm{\vec{b} - \mat{A}\vec{x}} + \norm{\mat{A}\vec{x}} \le \norm{\vec{b} - \mat{A}\vec{x}} + \norm{\vec{x}}.
    \end{equation*}
    Combining the four previous displays and the hypothesis $\kappa u\ll 1$, we obtain
    \begin{equation*}
        \vec{x} - \vec{\hat{x}}_1 = \left(\norm{\vec{x}} + \kappa \norm{\vec{b}-\mat{A}\vec{x}} \right) \cdot \Rhat^{-1}\vec{e}_3 + (\norm{\vec{b} - \mat{A}\vec{x}} + \norm{\vec{x}}u) \cdot (\mat{A}^\top \mat{A})^{-1} \vec{e}_4.
    \end{equation*}
    Taking the norm of this formula and using \cref{eq:AR-whiten-fp}, we conclude that the first solution produced by \cref{alg:meta-algorithm} is strongly forward stable:
    \begin{align}
        \norm{\vec{x} - \vec{\hat{x}}_1} &\lesssim \kappa (\norm{\vec{x}} + \kappa \norm{\vec{b} - \mat{A}\vec{x}}) u, \\
        \norm{\mat{A}(\vec{x} - \vec{\hat{x}}_1)} &\lesssim (\norm{\vec{x}} + \kappa \norm{\vec{b} - \mat{A}\vec{x}}) u. \label{eq:first-iterate-A}
    \end{align}
    In particular, by the Pythagorean theorem, we have
    \begin{equation}
        \norm{\vec{r}_1} = \left( \norm{\vec{b} - \mat{A}\vec{x}}^2 + \norm{\mat{A}(\vec{x} - \vec{\hat{x}}_1)}^2 \right)^{1/2} \lesssim \norm{\vec{b} - \mat{A}\vec{x}} + \norm{\vec{x}}u.\label{eq:first-iterate-r}
    \end{equation}
    Now, we treat the second iterate $\vec{\hat{x}}_2$.
    By \cref{lem:error-formula}, we have 
    \begin{equation*}
        \vec{x} - \vec{\hat{x}}_2 = \left(1 + \norm{\vec{x}} + \norm{\vec{\hat{x}}_1} + \kappa^2 u \norm{\vec{r}_1} + \kappa \norm{\mat{A}(\vec{x}-\vec{\hat{x}}_1)} \right) \cdot \Rhat^{-1}\vec{e}_5 + \norm{\vec{r}_1} \cdot (\mat{A}^\top \mat{A})^{-1} \vec{e}_6.
    \end{equation*}
    Substituting in \cref{eq:first-iterate-A,eq:first-iterate-r} and simplifying, we obtain
    \begin{equation*}
        \vec{x} - \vec{\hat{x}}_2 = \left(1 + \norm{\vec{x}} + \kappa^2 u \norm{\vec{b} - \mat{A}\vec{x}} \right) \cdot \Rhat^{-1}\vec{e}_7 + \norm{\vec{b} - \mat{A}\vec{x}} \cdot (\mat{A}^\top \mat{A})^{-1} \vec{e}_8.
    \end{equation*}
    By the non-pathological rounding error assumption, $\norm{\vec{x}} + \kappa^2 u \norm{\vec{b} - \mat{A}\vec{x}} \lesssim \norm{\vec{\hat{x}}_2}$.
    Thus,
    \begin{equation*}
        \vec{x} - \vec{\hat{x}}_2 = \left(1 + \norm{\vec{\hat{x}}_2} \right) \cdot \Rhat^{-1}\vec{e}_9 + \norm{\vec{b} - \mat{A}\vec{x}} \cdot (\mat{A}^\top \mat{A})^{-1} \vec{e}_8.
    \end{equation*}
    By \cref{cor:proving-backward}, we conclude that \cref{alg:meta-algorithm} is backward stable.
\end{proof}

\subsection{Generalization: Beyond randomized preconditioning} \label{sec:generalizing}

Upon inspection of the proof, we see that we use minimal properties about the randomized construction of the preconditioner $\mat{R}$ and initialization $\vec{x}_0$.
Thus, the analysis we have already done immediately yields the following generalization of \cref{thm:meta-stability}:

\begin{theorem}[Beyond randomized preconditioning]
    Assume conditions (A), (B), and (D) of \cref{thm:meta-stability} and the alternate condition:
    \begin{enumerate}[label=(\Alph*')]
        \setcounter{enumi}{2}
        \item \textbf{Good preconditioner, initialization.} The numerically computed preconditioner $\Rhat$ is triangular and the preconditioned matrix has condition number $\cond(\mat{A}\Rhat^{-1})$ bounded by an absolute constant.
        The numerically computed initialization $\vec{\hat{x}}_0$ satisfies the conclusions of \cref{fact:sketch-solve-numerical}.
    \end{enumerate}
    Then lines 4--8 of \cref{alg:meta-algorithm} produce a strongly forward stable solution $\vec{x}_1$ and a backward stable solution $\vec{x}_2$.
\end{theorem}

\section{FOSSILS: Stability analysis} \label{sec:FOSSILS-analysis}

We now prove use \cref{thm:meta-stability} to analyze FOSSILS.
To simplify the analysis, we will analyze the algorithm with conservative parameter choices \cref{eq:parameters_proof,eq:iterations}:

\begin{theorem}[FOSSILS: backward stability] \label{thm:fossils-stability}
    Assume conditions (A)--(C) of \cref{thm:meta-stability} and 
    \begin{enumerate}[label=(\Alph*)]
        \setcounter{enumi}{3}
        \item \textbf{Parameters for Polyak solver.} Set $\alpha$ and $\beta$ according to a distortion $1.01\eta$, i.e., 
        \begin{equation} \label{eq:parameters_proof}
            \alpha = (1 - (1.01\eta)^2)^2, \quad \beta = (1.01\eta)^2
        \end{equation}
        and assume $|\hat{\alpha} - \alpha|, |\hat{\beta} - \beta| \lesssim u$.
        Set the iteration count 
        \begin{equation} \label{eq:iterations}
            q \ge q_0 \coloneqq \max\left(1+2 \frac{\log(1/(\kappa u))}{\log(1/(1.01\eta))},11\right).
        \end{equation}
    \end{enumerate}
    Then FOSSILS is backward stable.
    In particular, choosing $\mat{S}$ to be a sparse sign embedding with distortion $\eta = 1/2$ with the parameter settings \cref{eq:cohen}, FOSSILS produces a backward stable solution in $\order(mn\log(n/u) + n^3\log n)$ operations.
\end{theorem}

\subsection{Exact arithmetic} \label{sec:analysis-exact}

We begin by analyzing FOSSILS in exact arithmetic.
In exact arithmetic, a single call to the FOSSILS subroutine is sufficient to obtain any desired level of accuracy. 
We have the following result:

\begin{theorem}[FOSSILS: Exact arithmetic] \label{thm:fossils-exact}
    Consider FOSSILS (\cref{alg:FOSSILS-basic}) with an arbitrary initialization $\vec{x}_0$ and suppose that the Polyak solver (\cref{alg:polyak}) is terminated at iteration $j$.
    Then
    \begin{align*}
        \norm{\vec{x} - \vec{x}_1} \le \kappa \cdot \frac{4\sqrt{2}}{1-\eta} \cdot j\eta^{j-2}\cdot \norm{\mat{A}(\vec{x} - \vec{x}_0)}, \quad \norm{\mat{A}(\vec{x} - \vec{x}_1)} \le \frac{4\sqrt{2}}{1-\eta} \cdot j\eta^{j-2}\cdot \norm{\mat{A}(\vec{x} - \vec{x}_0)}.
    \end{align*}
    In particular, with the sketch-and-solve initialization, 
    \begin{equation} \label{eq:fossils-sketch-and-solve-exact}
        \norm{\vec{x} - \vec{x}_1} \le \kappa \cdot \frac{16\sqrt{\eta}}{(1-\eta)^2} \cdot j\eta^{j-2}\cdot \norm{\vec{b}-\mat{A}\vec{x}}, \quad
        \norm{\mat{A}(\vec{x} - \vec{x}_1)} \le \frac{16\sqrt{\eta}}{(1-\eta)^2} \cdot j\eta^{j-2}\cdot \norm{\vec{b}-\mat{A}\vec{x}}.
    \end{equation}
\end{theorem}

\begin{algorithm}[t]
	\caption{Polyak heavy ball method for solving $(\mat{R}^{-\top}\mat{A}^\top\mat{A}\mat{R}^{-1})\, \vec{\delta y} = \vec{c}$} \label{alg:polyak}
	\begin{algorithmic}[1]
		\Require Matrix $\mat{A}\in\real^{m\times n}$, preconditioner $\mat{R} \in \real^{n\times n}$, right-hand side $\vec{c} \in \real^n$, coefficients $\alpha, \beta > 0$, and iteration count $q > 0$ \Comment{$q \approx \log(1/u)$}
		\Ensure Solution $\vec{\delta y}_{(q)}\in\real^n$
		\State $\vec{\delta y}_{(1)}, \vec{\delta y}_{(0)} \gets \vec{c}$
		\For{$j=1,2,\ldots,q-1$} 
        \State $\vec{\delta y}_{(j+1)}\gets \vec{\delta y}_{(j)} + \alpha(\vec{c} - \mat{R}^{-\top}(\mat{A}^\top (\mat{A}(\mat{R}^{-1}\, \vec{\delta y}_{(j)})))) + \beta(\vec{\delta y}_{(j)} - \vec{\delta y}_{(j-1)})$
		\EndFor
	\end{algorithmic}
\end{algorithm}

The main difficulty in proving this result is to analyze the Polyak iteration, which we show in \cref{alg:polyak}.
In anticipation of our analysis of FOSSILS in finite precision, the following lemma characterizes the behavior of the Polyak iteration with a perturbation at each step.

\begin{lemma}[Polyak with noise] \label{lem:inner-noise}
    Let $\eta > 0$. 
    Assume the preconditioner $\mat{R}$ satisfies \cref{eq:AR-whiten} and that $\alpha,\beta$ are given by \cref{eq:optimal-coeffs}.
    Consider a noisy version of the Polyak iteration
    \begin{equation*}
    \vec{\delta y}_{(j+1)} = \vec{\delta y}_{(j)} + \alpha (\vec{c} - \mat{R}^{-\top}(\mat{A}^\top (\mat{A}(\mat{R}^{-1}\vec{\delta y}_{(j)})))) + \beta(\vec{\delta y}_{(j)} - \vec{\delta y}_{(j-1)}) + \vec{f}_{(j)}
    \end{equation*}
    and assume the initializations $\vec{\delta y}_{(1)} = \vec{\delta y}_{(0)} = \vec{c}$.
    Then the error for the $j$th iterate satisfies
    \begin{equation*}
        \norm{\vec{\delta y}_{(j)} - (\mat{R}^{-\top}\mat{A}^\top \mat{A}\mat{R}^{-1})^{-1}\vec{c}} \le 4\sqrt{2}\cdot j\eta^{j-2}\norm{\vec{c}} + \frac{9}{(1-\eta)^2} \cdot \max_{1 \le k \le j-1} \norm{\vec{f}_{(k)}}.
    \end{equation*}
\end{lemma}

\begin{proof}
    Denote the error $\vec{d}_{(j)} \coloneqq (\mat{R}^{-\top}\mat{A}^\top \mat{A}\mat{R}^{-1})^{-1}\vec{c}-\vec{\delta y}_{(j)}$, which satisfies the recurrence
    \begin{equation*}
        \twobyone{\vec{d}_{(j+1)}}{\vec{d}_{(j)}} = \mat{T}  \twobyone{\vec{d}_{(j)}}{\vec{d}_{(j-1)}} - \twobyone{\vec{f}_{(j)}}{\vec{0}} \quad \text{for } \mat{T} = \twobytwo{(1+\beta)\Id - \alpha\mat{R}^{-\top}\mat{A}^\top\mat{A}\mat{R}^{-1}}{-\beta \Id}{\Id}{\mat{0}}.
    \end{equation*}
    Therefore, expanding this recurrence and using the initial conditions $\vec{d}_{(1)} = \vec{d}_{(0)} = \vec{c}$, we obtain
    \begin{equation*}
        \twobyone{\vec{d}_{(j)}}{\vec{d}_{(j-1)}} = \mat{T}^{j-1} \twobyone{\vec{c}}{\vec{c}} + \sum_{k=1}^{j-1} \mat{T}^{j-1-k} \twobyone{\vec{f}_{(k)}}{\vec{0}}.
    \end{equation*}
    From the proof of \cite[Thm.~10]{Epp24}, we have the bound $\norm{\mat{T}^k}\le 4(k+1)\eta^{k-1}$.
    Thus, we have
    \begin{align*}
        \norm{\vec{d}_{(j)}} &\le \norm{\twobyone{\vec{d}_{(j)}}{\vec{d}_{(j-1)}}} \le \norm{\mat{T}^{j-1}} \norm{\twobyone{\vec{c}}{\vec{c}}} + \sum_{k=0}^{j-1} \norm{\mat{T}^{j-1-k}} \norm{\twobyone{\vec{f}_{(k)}}{\vec{0}}} \\
        &\le 4\sqrt{2}\cdot j\eta^{j-2}\norm{\vec{c}} + \left(1+4\sum_{k=1}^{j-2} (j-k) \eta^{(j-2)-k}\right) \cdot \max_{1\le k \le j-1} \norm{\vec{f}_{(k)}}\\
    \end{align*}
    We have established the desired conclusion.
\end{proof}

\begin{proof}[Proof of \cref{thm:fossils-exact}]
    By \cref{eq:precond-normal}, we have that
    \begin{equation*}
        \vec{x} = \vec{x}_0 + \mat{R}^{-1} \left(\mat{R}^{-\top}\mat{A}^\top\mat{A}\mat{R}^{-1}\right)^{-1}\mat{R}^{-\top}\mat{A}^\top (\vec{b}-\mat{A}\vec{x}_0).
    \end{equation*}
    Thus,
    \begin{equation} \label{eq:x-error-exact}
        \vec{x} - \vec{x}_1 = \mat{R}^{-1} \left(\left(\mat{R}^{-\top}\mat{A}^\top\mat{A}\mat{R}^{-1}\right)^{-1}\mat{R}^{-\top}\mat{A}^\top (\vec{b}-\mat{A}\vec{x}_0) - \vec{\delta y}_{(j)}\right).
    \end{equation}
    By \cref{lem:inner-noise}, we have
    \begin{equation*}
        \norm{\left(\mat{R}^{-\top}\mat{A}^\top\mat{A}\mat{R}^{-1}\right)^{-1}\mat{R}^{-\top}\mat{A}^\top (\vec{b}-\mat{A}\vec{x}_0) - \vec{\delta y}_{(j)}} \le 4\sqrt{2} \cdot j \eta^{j-2} \norm{\mat{R}^{-\top}\mat{A}^\top(\vec{b} - \mat{A}\vec{x}_0)}.
    \end{equation*}
    By \cref{fact:whitening} and the identity $\mat{A}^\top (\vec{b} - \mat{A}\vec{x}_0) = \mat{A}^\top \mat{A} (\vec{x} - \vec{x}_0)$, the right-hand side can be bounded:
    \begin{equation*}
        \norm{\left(\mat{R}^{-\top}\mat{A}^\top\mat{A}\mat{R}^{-1}\right)^{-1}\mat{R}^{-\top}\mat{A}^\top (\vec{b}-\mat{A}\vec{x}_0) - \vec{\delta y}_{(j)}} \le \frac{4\sqrt{2}}{1-\eta} \cdot j\eta^{j-2}\cdot \norm{\mat{A}(\vec{x} - \vec{x}_0)}.
    \end{equation*}
    Thus, plugging into \cref{eq:x-error-exact} and using \cref{fact:whitening}, we obtain
    \begin{align*}
        \norm{\vec{x} - \vec{x}_1} &\le \norm{\mat{R}^{-1}} \cdot \frac{4\sqrt{2} \cdot j \eta^{j-2}}{1-\eta} \norm{\mat{A}(\vec{x} - \vec{x}_0)} \le \kappa \cdot \frac{4\sqrt{2}}{1-\eta} \cdot j\eta^{j-2}\cdot \norm{\mat{A}(\vec{x} - \vec{x}_0)}, \\
        \norm{\mat{A}(\vec{x} - \vec{x}_1)} &\le \norm{\mat{A}\mat{R}^{-1}} \cdot \frac{4\sqrt{2} \cdot j \eta^{j-2}}{1-\eta} \norm{\mat{A}(\vec{x} - \vec{x}_0)} \le \frac{4\sqrt{2}}{1-\eta} \cdot j\eta^{j-2}\cdot \norm{\mat{A}(\vec{x} - \vec{x}_0)}.
    \end{align*}
    The bounds \cref{eq:fossils-sketch-and-solve-exact} then follow from known bounds for sketch-and-solve (e.g., \cite[Fact 4]{Epp24}).
\end{proof}

\subsection{Stability of Polyak heavy ball iteration}

We have the following guaratee for the Polyak solver in finite precision:

\begin{lemma}[Stability of Polyak heavy ball] \label{lem:polyak}
	Instate the assumptions of \cref{thm:fossils-stability}.
	Then the output of \cref{alg:polyak} satisfies
	\begin{equation*}
		\norm{ \fl(\Call{Polyak}{\vec{c}}) - (\Rhat^{-\top}\mat{A}^\top\mat{A}\Rhat^{-1})^{-1}\vec{c} } \lesssim \kappa u \cdot \norm{\vec{c}}.
	\end{equation*}
\end{lemma}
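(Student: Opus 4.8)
The plan is to view the computed iterates $\vec{\hat{y}}_0,\dots,\vec{\hat{y}}_q$ of \cref{alg:polyak} as the exact output of a \emph{noisy} Polyak iteration of the form treated in \cref{lem:inner-noise}, and then to apply that lemma. Two preliminaries are needed. First, by \cref{lem:stability-sketching} together with the standing assumption $\kappa u \ll 1$, the computed factor $\Rhat$ satisfies the whitening bound \cref{eq:AR-whiten} with distortion $\hat\eta \coloneqq 1.01\eta$ in place of $\eta$; this slack is exactly why \cref{thm:fossils-stability} sets $\alpha,\beta$ according to $1.01\eta$. Consequently, writing $\mat{M} \coloneqq \Rhat^{-\top}\mat{A}^\top\mat{A}\Rhat^{-1}$, we have $(1+\hat\eta)^{-2} \le \sigma_{\rm min}(\mat{M}) \le \norm{\mat{M}} \le (1-\hat\eta)^{-2}$, so $\norm{\mat{M}^{-1}\vec{c}} \lesssim \norm{\vec{c}}$; and since $\hat\eta \le 1.01\cdot 0.9 < 0.91$, all the $\hat\eta$-dependent constants below are $\order(1)$.

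Writing out \cref{alg:polyak} line by line and collecting every rounding error of step $i$ into a single vector $\vec{f}_i$, the computed iterates satisfy exactly
\begin{equation*}
  \vec{\hat{y}}_{i+1} = \vec{\hat{y}}_i + \alpha\bigl(\vec{c} - \mat{M}\vec{\hat{y}}_i\bigr) + \beta\bigl(\vec{\hat{y}}_i - \vec{\hat{y}}_{i-1}\bigr) + \vec{f}_i, \qquad \vec{\hat{y}}_1 = \vec{\hat{y}}_0 = \vec{c},
\end{equation*}
with $\vec{c}$ exactly representable (it is the input). Here $\vec{f}_i$ collects (a) the replacement of $\hat\alpha,\hat\beta$ by $\alpha,\beta$, which by $|\hat\alpha-\alpha|,|\hat\beta-\beta|\lesssim u$ and $\norm{\mat{M}}\lesssim 1$ contributes $\lesssim u(\norm{\vec{c}}+\norm{\vec{\hat{y}}_i}+\norm{\vec{\hat{y}}_{i-1}})$; (b) the error in forming $\mat{M}\vec{\hat{y}}_i = \Rhat^{-\top}(\mat{A}^\top(\mat{A}(\Rhat^{-1}\vec{\hat{y}}_i)))$, which by \cref{eq:M-analysis} is only $\lesssim \kappa u\norm{\vec{\hat{y}}_i}$ -- this is the decisive estimate, since a naive count of the two triangular solves would suggest $\kappa^2 u$; and (c) the remaining elementary additions and scalings, bounded by \cref{fact:basic-stability}. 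Altogether $\norm{\vec{f}_i}\lesssim \kappa u\max_{0\le j\le i}\norm{\vec{\hat{y}}_j}$.

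To convert this into a clean bound I would first establish a uniform bound $\norm{\vec{\hat{y}}_i}\le K\norm{\vec{c}}$, with $K$ an $\order(1)$ constant, by bootstrapping \cref{lem:inner-noise} itself. The cases $i \le 1$ are immediate since $\vec{\hat{y}}_0 = \vec{\hat{y}}_1 = \vec{c}$; assuming the bound for all indices $\le i$, the previous paragraph gives $\norm{\vec{f}_j}\lesssim \kappa u K\norm{\vec{c}}$ for $j\le i$, and \cref{lem:inner-noise} (with distortion $\hat\eta$, evaluated at index $i+1$) yields
\begin{equation*}
  \norm{\vec{\hat{y}}_{i+1}} \le \norm{\mat{M}^{-1}\vec{c}} + 4\sqrt{2}\,(i+1)\hat\eta^{\,i-1}\norm{\vec{c}} + \frac{9}{(1-\hat\eta)^2}\max_{1\le j\le i}\norm{\vec{f}_j} \lesssim \bigl(1 + \kappa u K\bigr)\norm{\vec{c}},
\end{equation*}
using $\sup_{i\ge 1}(i+1)\hat\eta^{\,i-1} = \order(1)$; choosing $K$ appropriately and invoking $\kappa u\ll 1$ closes the induction. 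With $\norm{\vec{f}_j}\lesssim \kappa u\norm{\vec{c}}$ now in hand, a final application of \cref{lem:inner-noise} at index $q$ gives
\begin{equation*}
  \norm{\fl(\Call{Polyak}{\vec{c}}) - \mat{M}^{-1}\vec{c}} \le 4\sqrt{2}\,q\,\hat\eta^{\,q-2}\norm{\vec{c}} + \frac{9}{(1-\hat\eta)^2}\max_{1\le j\le q-1}\norm{\vec{f}_j},
\end{equation*}
and the second term is $\lesssim \kappa u\norm{\vec{c}}$ at once. For the first term, the iteration count \cref{eq:iterations} is engineered so that $q_0\hat\eta^{\,q_0-2}\lesssim \kappa u$ (a short case analysis on whether the $\max$ in \cref{eq:iterations} is active), while the clause $q_0\ge 11$ places $q$ beyond the maximizer of $t\mapsto t\hat\eta^{\,t-2}$, so that $q\hat\eta^{\,q-2}\le q_0\hat\eta^{\,q_0-2}\lesssim \kappa u$ for every $q\ge q_0$. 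This gives the asserted bound.

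The main obstacle is the bookkeeping in step (b): keeping the error of applying $\mat{M}$ at $\kappa u$ rather than $\kappa^2 u$ is the crux, and it rests entirely on \cref{eq:M-analysis}, hence ultimately on the backward stability of triangular solves (\cref{prop:backward-triangular}). A secondary point is verifying that the bootstrap constant $K$ can be taken independently of $q$ and of the problem data, which the transient bound built into \cref{lem:inner-noise} makes possible; the convergence-term estimate is fiddly to state cleanly but is precisely what the definition \cref{eq:iterations} of $q_0$ was designed to deliver.
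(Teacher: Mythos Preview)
Your proposal is correct and follows essentially the same route as the paper: cast the computed iterates as a noisy Polyak recursion, bound each perturbation $\vec{f}_i$ via \cref{eq:M-analysis} (the $\kappa u$ rather than $\kappa^2 u$ point you highlight), bootstrap a uniform $\order(1)$ bound on $\norm{\vec{\hat{y}}_i}/\norm{\vec{c}}$ through \cref{lem:inner-noise}, and finish with the case split on \cref{eq:iterations}. The only cosmetic difference is that the paper inducts on $\norm{\vec{f}_i}$ (with explicit constants) while you induct on $\norm{\vec{\hat{y}}_i}$; these are equivalent, and your treatment of the $\hat\alpha,\hat\beta$ perturbation is slightly more explicit than the paper's.
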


Combining \cref{lem:polyak} with \cref{thm:meta-stability} proves \cref{thm:fossils-stability}.

\begin{proof}
    By \cref{lem:stability-sketching} and the assumption $\kappa u \ll 1$, the numerically computed preconditioner satisfies \cref{eq:AR-whiten} with distortion parameter $1.01\eta$.
    Thus, we are in the setting to apply \cref{lem:inner-noise}.

    The Polyak iterates $\vec{\hat{\delta y}}_{(j)}$ evaluated in floating point satisfy a recurrence
    \begin{equation*}
        \vec{\hat{\delta y}}_{(j+1)} = \vec{\hat{\delta y}}_{(j)} + \alpha (\vec{c} - \Rhat^{-\top}\mat{A}^\top \mat{A}\Rhat^{-1}\vec{\hat{\delta y}}_{(j)}) + \beta(\vec{\hat{\delta y}}_{(j)} - \vec{\hat{\delta y}}_{(j-1)}) + \vec{f}_{(j)},
    \end{equation*}
    where $\vec{f}_{(j)}$ is a vector of floating point errors incurred in applying the recurrence.
    By \cref{prop:whitened-stability},
    \begin{equation*}
        \norm{\err(\Rhat^{-\top}\mat{A}^\top\mat{A}\Rhat^{-1}\, \vec{\hat{\delta y}}_{(j)})} \lesssim \kappa u \norm{\vec{\hat{\delta y}}_{(j)}}.
    \end{equation*}
    In particular, since $\kappa u \ll 1$, $\norm{\fl(\Rhat^{-\top}\mat{A}^\top\mat{A}\Rhat^{-1}\smash{\vec{\hat{\delta y}}_{(j)}})} \lesssim \norm{\smash{\vec{\hat{\delta y}}_{(j)}}}$.
    By several applications of \cref{fact:basic-stability}, we conclude
    \begin{equation*}
        \norm{\vec{f}_{(j)}} = \norm{\err(\smash{\vec{\hat{\delta y}}}_{(j+1)})} \lesssim (\kappa \norm{\vec{\smash{\hat{\delta y}}}_{(j)}} + \norm{\vec{c}} + \norm{\smash{\vec{\hat{\delta y}}_{(j-1)}}})u \lesssim \kappa u ( \norm{\smash{\vec{\hat{\delta y}}}_{(j)}} + \norm{\vec{c}} + \norm{\smash{\vec{\hat{\delta y}}}_{(j-1)}}).
    \end{equation*}
    More precisely, let us say that
    \begin{equation} \label{eq:f-bound}
        \norm{\vec{f}_{(j)}} \le C\kappa u ( \norm{\vec{\hat{\delta y}}_{(j)}} + \norm{\vec{c}} + \norm{\vec{\hat{\delta y}}_{(j-1)}})
    \end{equation}
    for an appropriate quantity $C$ depending polynomially on $m$ and $n$.

    Now, we establish the floating point errors remain bounded across iterations; specifically,
    \begin{equation} \label{eq:fp-error-bound-inductive}
        \norm{\vec{f}_{(j)}} \le 141 C\kappa u \norm{\vec{c}} \quad \text{for } j =1,2,\ldots.
    \end{equation}
    We prove this claim by induction. For the base case $j = 1$, we have
    \begin{equation*}
        \norm{\vec{f}_{(1)}} \le C\kappa u (\norm{\smash{\vec{\hat{\delta y}}}_{(1)}} + \norm{\vec{c}} + \norm{\smash{\vec{\hat{\delta y}}}_{(0)}}) = 3C\kappa u < 141C\kappa u.
    \end{equation*}
    Here, we used the initial conditions $\vec{\hat{y}}_{(0)} = \vec{\hat{y}}_{(1)} = \vec{c}$.
    Now suppose that \cref{eq:fp-error-bound-inductive} holds up to $i$.
    By \cref{lem:inner-noise}, we have the following bound for $j\le i+1$:
    \begin{equation*}
        \norm{\vec{\hat{\delta y}}_{(j)}} \le 4\sqrt{2} \cdot j(1.01\eta)^{j-1} \norm{\vec{c}} + \frac{9}{(1-1.01\eta)^2} \cdot \max_{1\le k \le j-1} \norm{\vec{f}_{(k)}} + \norm{(\Rhat^{-\top}\mat{A}^\top\mat{A}\Rhat^{-1})^{-1}\vec{c}}.
    \end{equation*}
    Using the induction hypothesis \cref{eq:fp-error-bound-inductive}, the hypothesis $\eta\le 0.9$, the numerical inequality $j(1.01)^{j-1} < 11$ for all $j\ge 0$, and the preconditioning result $\sigma_{\rm min}(\mat{A}\Rhat^{-1}) \ge (1+1.01\eta)^{-1}$, we obtain
    \begin{equation*}
        \norm{\smash{\vec{\hat{\delta y}}}_{(j)}} \le 44\sqrt{2} \norm{\vec{c}} + 1087 \cdot (141C \kappa u \norm{\vec{c}}) + 3.7\norm{\vec{c}} \le 70 \norm{\vec{c}} \quad \text{for } j \le i+1.
    \end{equation*}
    In the last bound, we used the assumption $\kappa u \ll 1$.
    Plugging into \cref{eq:f-bound}, we obtain
    \begin{equation*}
        \norm{\vec{f}_{(j+1)}} \le C\kappa u( \norm{\smash{\vec{\hat{\delta y}}}_{(j+1)}} + \norm{\vec{c}} + \norm{\smash{\vec{\hat{\delta y}}}_{(j)}}) \le 141 C\kappa u.
    \end{equation*}
    This completes the proof of the claim \cref{eq:fp-error-bound-inductive}.

    Finally, using \cref{eq:fp-error-bound-inductive} and \cref{lem:inner-noise}, we obtain
    \begin{equation} \label{eq:intermediate-bound}
        \norm{\smash{\vec{\hat{\delta y}}}_{(q)} - (\Rhat^{-\top}\mat{A}^\top\mat{A}\Rhat^{-1})^{-1}\vec{c}} \le 4\sqrt{2} \cdot q (1.01\eta)^{q-1} \norm{\vec{c}} + 141C \kappa u\norm{\vec{c}}.
    \end{equation}
    The function $q\mapsto q(1.01)^{q-1}$ is monotone decreasing for $q\ge 11$.
    There are two cases.
    \begin{itemize}
        \item \textbf{Case 1.} Suppose $q_0 = 11$.
        Then $2 \log(1/(\kappa u)) / \log(1/(1.01\eta)) \le 11$ so $(1.01\eta)^{5.5} \le \kappa u$.
        Thus,
        \begin{equation*}
            q (1.01\eta)^{q-1} \le q_0 (1.01\eta)^{q_0-1} = 11(1.01\eta)^{4.5} (1.01\eta)^{5.5} \le 7.2 \kappa u. 
        \end{equation*}
        Substituting this bound in \cref{eq:intermediate-bound}, we obtain the desired conclusion 
        \begin{equation} \label{eq:desired-conclusion}
            \norm{\smash{\vec{\hat{\delta y}}}_{(q)} - (\Rhat^{-\top}\mat{A}^\top\mat{A}\Rhat^{-1})^{-1}\vec{c}} \lesssim \kappa u \norm{\vec{c}}.
        \end{equation}
        \item \textbf{Case 2.} Suppose $q_0 > 11$.
        Then
        \begin{align*}
            q (1.01\eta)^{q-1} &\le q_0 (1.01\eta)^{q_0-1} = \left(1+2 \frac{\log(1/(\kappa u))}{\log(1/(1.01\eta))}\right) (1.01\eta)^{2 \log(1/(\kappa u))/\log(1/(1.01\eta))} \\
            &= \left(1+2 \frac{\log(1/(\kappa u))}{\log(1/(1.01\eta))}\right) (\kappa u)^2 \le \left(1+21 \log(1/(\kappa u))\right)(\kappa u)^2 \le 2\kappa u.
        \end{align*}
        In the last inequality, we use the hypothesis $\kappa u \ll 1$.
        Substituting this bound in \cref{eq:intermediate-bound}, we obtain the desired conclusion \cref{eq:desired-conclusion}.
    \end{itemize}
    Having exhausted both cases, we conclude that \cref{eq:desired-conclusion} always holds, completing the proof.
\end{proof}

\section{SPIR: Stability analysis} \label{sec:SPIR-analysis}

As discussed in \cref{sec:existing-stability-least-squares} and shown by the numerical experiments in this paper, empirical results suggest that sketch-and-precondition has the following stability properties:
\begin{enumerate}[label=(\roman*)]
    \item With the zero initialization, sketch-and-precondition is numerically unstable.
    \item With the sketch-and-solve initialization $\vec{x}_0$, sketch-and-precondition produces a solution $\vec{x}_1$ that is strongly forward stable, but not backward stable.
    \item When initialized with $\vec{x}_1$, sketch-and-precondition produces a backward stable solution $\vec{x}_2$.
\end{enumerate}
Point (iii) forms the basis for the SPIR method, introduced in this paper.
Previous to this work, no explanation was known for the significant stability differences between different initializations (i)--(iii) for sketch-and-precondition.
In this section, we will explain these different stability properties using \cref{thm:meta-stability}.

\subsection{Lanczos-based implementation of SPIR}

We have now seen two variants of sketch-and-precondition using conjugate gradient and LSQR.
These are the implementations we recommend in practice and, as demonstrated in \cref{fig:spir}, they empirically possess similar stability properties.
For our stability analysis, we shall analyze a \emph{third} implementation of sketch-and-precondition using the Lanczos method as the linear solver, given by replacing conjugate gradient in \cref{alg:SPIR-basic} with the Lanczos linear solve routine in \cref{alg:lanczos}. 

The Lanczos method is most well-known as a method for computing a select number of eigenpairs of a Hermitian matrix $\mat{M}$, but it can also be used to evaluate expressions of the form $f(\mat{M})\vec{c}$.
Here, $f : \set{I} \to \real$ is a function defined on an interval $\set{I} \subseteq \real$, extended to matrix inputs in the standard way.
In particular, with the choice $f(x) = x^{-1}$, the Lanczos method can be used to solve linear systems of equations $\mat{M}\vec{y} = \vec{c}$.
For a symmetric positive definite matrix $\mat{M}$ and \textbf{in exact arithmetic, the Lanczos linear solver and conjugate gradient method produce the same outputs} \cite[p.~44]{Gre97a}.
The Lanczos method for linear solves is shown in \cref{alg:lanczos}.
(Note that we use Lanczos with no additional reorthogonalization, which is known to be stable for computation of $f(\mat{M})\vec{c}$ \cite{DK92,MMS18}.)

\begin{algorithm}[t]
	\caption{Lanczos method for linear solves $\mat{M}^{-1}\vec{c}$} \label{alg:lanczos}
	\begin{algorithmic}[1]
		\Require Matrix--vector product $\vec{z} \mapsto \mat{M}\vec{z}$ subroutine \Call{Apply}{}, right-hand side $\vec{c}$, number of steps $k$
		\Ensure Approximate solution $\vec{y} \approx \mat{M}^{-1}\vec{c}$
		\State $\vec{q}_0 \gets \vec{0}$, $\gamma \gets \norm{\vec{c}}$, $\vec{q}_1\gets \vec{c}/\gamma$, $\beta_1 \gets 0$
        \For{$i = 1,\ldots,k$}
        \State $\vec{q}_{i+1} \gets \Call{Apply}{\vec{q}_i} - \beta_i \vec{q}_{i-1}$
        \State $\alpha_i \gets \vec{q}_{i+1}^\top \vec{q}_i^{\vphantom{\top}}$
        \State $\vec{q}_{i+1} \gets \vec{q}_{i+1} - \alpha_i \vec{q}_i$
        \State $\beta_{i+1} \gets \norm{\vec{q}_{i+1}}$
        \State \textbf{if} $\beta_{i+1} = 0$, set $k\gets i$ and \textbf{break}
        \State $\vec{q}_{i+1} \gets \vec{q}_{i+1} / \beta_{i+1}$
        \EndFor
        \State $\mat{T} \gets \begin{bmatrix} \alpha_1 & \beta_2 \\ 
        \beta_2 & \alpha_2 & \ddots \\
        &\ddots & \ddots & \beta_k \\ && \beta_k & \alpha_k\end{bmatrix}$, $\mat{Q} \gets \begin{bmatrix} \vec{q}_1 & \cdots & \vec{q}_k \end{bmatrix}$
        \State $\vec{y} \gets \gamma \cdot \mat{Q} (\mat{T}^{-1}\mathbf{e}_1)$
	\end{algorithmic}
\end{algorithm}

\subsection{Stability of SPIR}

Now, we turn our attention to analyzing the stability of SPIR (\cref{alg:SPIR-basic}) with Lanczos linear solves.
There exists a significant gap between the observed performance of the Lanczos method in practice and the best-known theoretical results for the method in finite precision.
While the existing bounds are crude, they are sufficient for our purposes to establish backward stability of SPIR.
Specifically, we will make use of the following simplification and special case of the results developed in \cite[sec.~6]{MMS18}:

\begin{fact}[Stability of Lanczos linear solves] \label{fact:stable-lanczos}
    Let $\mat{M} \in \real^{n\times n}$ be a positive definite matrix and assume $\mat{M}$ is well-conditioned, $\cond(\mat{M}) \le 40$.
    Assume the \Call{Apply}{} operation satisfies
    \begin{equation} \label{eq:stable-apply}
        \norm{\fl(\Call{Apply}{\vec{z}}) - \mat{M}\vec{z}} \lesssim \norm{\mat{M}}\norm{\vec{z}}\tilde{u}
    \end{equation}
    for some effective precision $\tilde{u} > 0$ and assume that $n\ge \log(1/\tilde{u})$.
    Then the Lanczos algorithm \cref{alg:lanczos} ran for $k = \order(\log(1/\tilde{u}))$ steps produces an output satisfying
    \begin{equation} \label{eq:lanczos-stability}
        \norm{\mat{M}} \cdot \norm{\fl(\Call{LanczosLinearSolve}{\textsc{Apply},\vec{c}}) - \mat{M}^{-1}\vec{c}} \lesssim \norm{\vec{c}} \tilde{u}.
    \end{equation}
\end{fact}

We describe how \cref{fact:stable-lanczos} can be distilled from \cite[sec.~6]{MMS18} in \cref{app:MMS}.
A slightly stronger version of this result, better in ways suppressed by our $\lesssim$ notation, appears in \cite[Thm.~2.2]{DGK98}.
With \cref{fact:stable-lanczos} in place, we analyze SPIR:

\begin{theorem}[SPIR: backward stability] \label{thm:spir-stability}
    Assume conditions (A)--(C) of \cref{thm:meta-stability} and consider SPIR (\cref{alg:SPIR-basic}) run with Lanczos linear solves (\cref{alg:lanczos}) for $k = \order(\log(1/\kappa u))$ steps.
    Assume $n \ge \log(1/\kappa u)$.
    Then:
    \begin{enumerate}
        \item The first solution $\vec{x}_1$ (i.e., sketch-and-precondition with the sketch-and-solve initialization) is strongly forward stable, and
        \item The output $\vec{x}_2$ of SPIR is backward stable.
    \end{enumerate}
    In particular, choosing $\mat{S}$ to be a sparse sign embedding with distortion $\eta = 1/2$ with the parameter settings \cref{eq:cohen}, SPIR produces a backward stable solution in $\order(mn\log(n/u) + n^3\log n)$ operations.
\end{theorem}

\begin{proof}
    Define $\mat{M} = \Rhat^{-\top}\mat{A}^\top\mat{A}\Rhat^{-1}$. 
    By \cref{lem:stability-sketching} and the hypothesis  $\kappa u \ll 1$, $\cond(\mat{M}) \le 40$.
    By \cref{prop:whitened-stability}, the apply operation $\Call{Apply}{\vec{c}} = \Rhat^{-\top}(\mat{A}^\top(\mat{A}(\Rhat^{-1}\vec{c})))$ satisfies the stability guarantee \cref{eq:stable-apply} with $\tilde{u} = \kappa u$.
    Ergo, \cref{fact:stable-lanczos} implies that
    \begin{equation*}
        \norm{\fl(\Call{LanczosLinearSolve}{\textsc{Apply},\vec{c}}) - \mat{M}^{-1}\vec{c}} \lesssim \kappa u\norm{\vec{c}}.
    \end{equation*}
    Thus, the hypotheses of \cref{thm:meta-stability} hold, from which the desired conclusion follows.
\end{proof}

\Cref{thm:spir-stability} effectively resolves the previously mysterious stability properties of sketch-and-precondition, at least up to the use of Lanczos in place of conjugate gradient or LSQR.
Since Lanczos, conjugate gradient, and LSQR are equivalent in exact arithmetic and have similar stabilities in practice (in our experiments, at least), we believe \cref{thm:spir-stability} provides stong evidence for the backward stability of conjugate gradient- and LSQR-based implementations of SPIR.

\section*{Acknowledgements}

We thank Deeksha Adil, Anne Greenbaum, Christopher Musco, Mert Pilanci, Joel Tropp, Madeleine Udell, and Robert Webber for helpful discussions.
ENE acknowledges support by the DOE CSGF.\footnote[1]{This report was prepared as an account of work sponsored by an agency of the United States Government. Neither the United States Government nor any agency thereof, nor any of their employees, makes any warranty, express or implied, or assumes any legal liability or responsibility for the accuracy, completeness, or usefulness of any information, apparatus, product, or process disclosed, or represents that its use would not infringe privately owned rights. Reference herein to any specific commercial product, process, or service by trade name, trademark, manufacturer, or otherwise does not necessarily constitute or imply its endorsement, recommendation, or favoring by the United States Government or any agency thereof. The views and opinions of authors expressed herein do not necessarily state or reflect those of the United States Government or any agency thereof.}

\bibliographystyle{amsplain}
\bibliography{refs}

\appendix

\section{Auxilliary proofs} \label{app:FOSSILS}

In this section, we prove \cref{thm:backward-componentwise,cor:proving-backward,prop:sketched-KW}.

\subsection{\texorpdfstring{Proof of \cref{thm:backward-componentwise}}{Proof of Theorem 2.8}} \label{app:backward-componentwise}

    Let us compute the Karlson--Wald\'en estimate $\hat{\BE}_1(\vec{\hat{x}})$.
    Expanding $\vec{\hat{x}} = \sum_{i=1}^n (\vec{v}_i^\top \vec{\hat{x}})\vec{v}_i^{\vphantom{\top}}$, we have
    \begin{equation*}
        \mat{A}^\top(\vec{b} - \mat{A}\vec{\hat{x}}) = \sum_{i=1}^n \sigma_i^{\vphantom{\top}} \left(\vec{u}_i^\top \vec{b} - \sigma_i^{\vphantom{\top}} (\vec{v}_i^\top \vec{\hat{x}})\right) \vec{v}_i^{\vphantom{\top}} = \mat{A}^\top(\vec{b} - \mat{A}\vec{\hat{x}}) = \sum_{i=1}^n \sigma_i^2  \vec{v}_i^\top (\vec{x}- \vec{\hat{x}}) \cdot \vec{v}_i^{\vphantom{\top}}.
    \end{equation*}
    For the second equality, we used the fact that the true solution $\vec{x}$ to the least-squares proble satisfies $\vec{u}_i^\top \vec{b} = \sigma_i^{\vphantom{\top}} \vec{v}_i^\top \vec{x}$.
    We now evaluate the matrix expression in the Karlson--Wald\'en estimate:
    \begin{equation*}
        \left( \mat{A}^\top\mat{A} + \frac{\norm{\vec{b} - \mat{A}\vec{\hat{x}}}^2}{1+\norm{\vec{\hat{x}}}^2} \Id \right)^{-1/2} = \sum_{i=1}^n \left( \sigma_i^2 + \frac{\norm{\vec{b} - \mat{A}\vec{\hat{x}}}^2}{1+\norm{\vec{\hat{x}}}^2} \right)^{-1/2} \vec{v}_i^{\vphantom{\top}}\vec{v}_i^\top
    \end{equation*}
    Combining the two previous displays and simplifying yields
    \begin{equation} \label{eq:KW-expanded}
        \hat{\BE}_1(\vec{\hat{x}})^2 = \sum_{i=1}^n \frac{\sigma_i^4}{(1+\norm{\vec{\hat{x}}}^2)\sigma_i^2 +\norm{\vec{b} - \mat{A}\vec{\hat{x}}}^2}\left(\vec{v}_i^\top (\vec{x}- \vec{\hat{x}})\right)^2.
    \end{equation}
    
    Now, we make a chain of deductions.
    By \cref{fact:KW-estimate}, $\BE_1(\vec{\hat{x}}) \lesssim u$ if and only if $\hat{\BE}_1(\vec{\hat{x}})\lesssim u$.
    This, in turn, occurs if and only if each summand in \cref{eq:KW-expanded} satisfies
    \begin{equation}\label{eq:simplified-backward-condition}
        \frac{\sigma_i^4}{(1+\norm{\vec{\hat{x}}}^2)\sigma_i^2 +\norm{\vec{b} - \mat{A}\vec{\hat{x}}}^2}\left(\vec{v}_i^\top (\vec{x}- \vec{\hat{x}})\right)^2\lesssim u^2 \quad \text{for } i =1,2,\ldots,n.
    \end{equation}
    Rearranging and using the fact that $\sqrt{\alpha + \beta}$ and $\sqrt{\alpha} + \sqrt{\beta}$ are within a factor $\sqrt{2}$, we see that \cref{eq:simplified-backward-condition} holds if and only if
    \begin{equation*}
        \left|\vec{v}_i^\top (\vec{x} - \vec{\hat{x}})\right| \lesssim \frac{1}{\sigma_i} (1 + \norm{\vec{\hat{x}}})u + \frac{1}{\sigma_i^2} \norm{\vec{b} - \mat{A}\vec{\hat{x}}}u.
    \end{equation*}
    This is the desired conclusion \cref{eq:componentwise-error}, under the standing normalization $\norm{\mat{A}} = \norm{\vec{b}} = 1$. \hfill $\qed$

\subsection{\texorpdfstring{\rr{Proof of \cref{cor:forward-componentwise}}}{Proof of Proposition 2.9}} \label{sec:forward-componentwise}

\rr{First, assume (1).
Then}
\begin{equation*}
    \rr{\left| \vec{v}_i^\top (\vec{\hat{x}} - \vec{x}) \right| = \sigma_i^{-1}\cdot \left| \vec{u}_i^\top \mat{A}(\vec{\hat{x}} - \vec{x}) \right| \le \sigma_i^{-1} \cdot \norm{\mat{A}(\vec{\hat{x}} - \vec{x})}.}
\end{equation*}
\rr{By strong forward stability, the right-hand side is bounded as
\begin{equation} \label{eq:first-res-bound}
    \left| \vec{v}_i^\top (\vec{\hat{x}} - \vec{x}) \right| \lesssim \left(\sigma_i^{-1} \cdot  \norm{\vec{x}} + \sigma_i^{-1}\sigma_n^{-1} \cdot \norm{\vec{b} - \mat{A}\vec{x}}\right) u.
\end{equation}
This establishes (2).}

\rr{Now assume (2).
(Equivalently, assume \cref{eq:first-res-bound}.)
We may bound $\norm{\vec{x}}$ as
\begin{equation} \label{eq:norm-bound}
    \norm{\vec{x}} \le \norm{\vec{\hat{x}}} + \norm{\vec{\hat{x}} - \vec{x}} \le \norm{\vec{\hat{x}}} + \sigma_n^{-1} \cdot \norm{\mat{A}(\vec{\hat{x}} - \vec{x})} \le \norm{\vec{\hat{x}}} + \sigma_n^{-1} \cdot \norm{\vec{b} - \mat{A}\vec{\hat{x}}}.
\end{equation}
The first inequality is the triangle inequality, the second inequality follows since $\sigma_n$ is the minimum singular value of $\mat{A}$, and the last inequality follows from the Pythagorean identity
\begin{equation*}
    \norm{\vec{b} - \mat{A}\vec{\hat{x}}}^2 = \norm{\vec{b} - \mat{A}\vec{x}}^2 + \norm{\mat{A}(\vec{\hat{x}} - \vec{x})}^2.
\end{equation*}
Substituting \cref{eq:norm-bound} into \cref{eq:first-res-bound} and using the optimality of the residual $\norm{\vec{b} - \mat{A}\vec{x}} \le \norm{\vec{b} - \mat{A}\vec{\hat{x}}}$, we obtain
\begin{equation*}
    \left| \vec{v}_i^\top (\vec{\hat{x}} - \vec{x}) \right| \lesssim \left(\sigma_i^{-1} \cdot  \norm{\vec{\hat{x}}} + \sigma_i^{-1}\sigma_n^{-1} \cdot \norm{\vec{b} - \mat{A}\vec{\hat{x}}}\right) u.
\end{equation*}
Therefore, \cref{eq:strong-forward-componentwise} holds, showing (3).}

\rr{Now suppose (3).
Then,
\begin{align*}
    \norm{\mat{A}(\vec{\hat{x}}-\vec{x})} &= \left| \sum_{i=1}^n \left[\sigma_i \cdot \vec{u}_i^{\vphantom{\top}} \vec{v}_i^\top (\vec{\hat{x}} - \vec{x})\right] \right| \le \sum_{i=1}^n \left[\sigma_i \cdot \left|\vec{v}_i^\top (\vec{\hat{x}} - \vec{x})\right] \right| \\
    &\le \left(\sum_{i=1}^n [(1+\norm{\vec{\hat{x}}}) + \sigma_n^{-1} \cdot \norm{\vec{b} - \mat{A}\vec{\hat{x}}}]\right)u \lesssim \left[1 + \norm{\vec{\hat{x}}} + \sigma_n^{-1} \cdot \norm{\vec{b} - \mat{A}\vec{\hat{x}}}\right]u
\end{align*}
Using the same argument as \cref{eq:norm-bound}, we have $\norm{\vec{\hat{x}}} \le \norm{\vec{x}} + \sigma_n^{-1} \cdot \norm{\vec{b} - \mat{A}\vec{\hat{x}}}$.
Thus,
\begin{equation*}
    \norm{\mat{A}(\vec{\hat{x}}-\vec{x})} \lesssim \left[1 + \norm{\vec{x}} + \sigma_n^{-1} \cdot \norm{\vec{b} - \mat{A}\vec{\hat{x}}}\right]u \lesssim \left[\norm{\vec{x}} + \sigma_n^{-1} \cdot \norm{\vec{b} - \mat{A}\vec{x}}\right]u + \sigma_n^{-1} u \cdot \norm{\mat{A}(\vec{\hat{x}}-\vec{x})}.
\end{equation*}
In the second inequality, we use the triangle inequality and the inequality 
\begin{equation*}
    1 = \norm{\vec{b}} \le \norm{\mat{A}\vec{x}} + \norm{\vec{b} - \mat{A}\vec{x}} \le \norm{\vec{x}} + \norm{\vec{b} - \mat{A}\vec{x}}.
\end{equation*}
Using the hypothesis $\sigma_n^{-1} u \ll 1$, we have the bound
\begin{equation*}
    \norm{\mat{A}(\vec{\hat{x}}-\vec{x})} \le c \left[\norm{\vec{x}} + \sigma_n^{-1} \cdot \norm{\vec{b} - \mat{A}\vec{x}}\right]u + \frac{1}{2}\norm{\mat{A}(\vec{\hat{x}}-\vec{x})}.
\end{equation*}
for a prefactor $c$ polynomially large in $m$ and $n$.
Thus,
\begin{equation*}
    \norm{\mat{A}(\vec{\hat{x}}-\vec{x})} \le 2c\left[\norm{\vec{x}} + \sigma_n^{-1} \cdot \norm{\vec{b} - \mat{A}\vec{x}}\right]u,
\end{equation*}
so $\vec{\hat{x}}$ is strongly forward stable, showing (1).\hfill $\qed$}

\subsection{\texorpdfstring{\rr{Proof of \cref{cor:strong-forward-enough}}}{Proof of Corollary 2.10}} \label{sec:strong-forward-enough}

\rr{Suppose $\vec{\hat{x}}$ is strongly forward stable.
By \cref{cor:forward-componentwise}, it follows that
\begin{equation*}
    \left| \vec{v}_i^\top (\vec{\hat{x}} - \vec{x}) \right| \lesssim \left(\sigma_i^{-1} \cdot  \norm{\vec{x}} + \sigma_i^{-1}\sigma_n^{-1} \cdot \norm{\vec{b} - \mat{A}\vec{x}}\right) u \quad \text{for } i=1,2\ldots,n.
\end{equation*}
By assumption, $\sigma_n^{-1} \cdot \norm{\vec{b} - \mat{A}\vec{x}} \lesssim 1 + \norm{\vec{x}}$.
Thus,
\begin{equation*}
    \left| \vec{v}_i^\top (\vec{\hat{x}} - \vec{x}) \right| \lesssim \sigma_i^{-1} \cdot (1+\norm{\vec{x}})u \quad \text{for } i=1,2\ldots,n.
\end{equation*}
In particular, using the hypothesis $\cond(\mat{A})u \ll 1$, we have $\norm{\vec{\hat{x}} - \vec{x}} \le \cond(\mat{A}))u \cdot (1+\norm{\vec{x}})\le 0.5 + 0.5\norm{\vec{x}}$.
Thus, 
\begin{equation*}
    \norm{\vec{x}} \le \norm{\vec{\hat{x}} - \vec{x}} + \norm{\vec{\hat{x}}} \le 0.5 + 0.5\norm{\vec{x}}+ \norm{\vec{\hat{x}}}.
\end{equation*}
Ergo, $\norm{\vec{x}} \le 1 + 2 \norm{\vec{\hat{x}}}$. 
We conclude
\begin{equation*}
    \left| \vec{v}_i^\top (\vec{\hat{x}} - \vec{x}) \right| \lesssim \sigma_i^{-1} \cdot (1+\norm{\vec{\hat{x}}})u \quad \text{for } i=1,2\ldots,n,
\end{equation*}
so $\vec{\hat{x}}$ is backward stable by \cref{thm:backward-componentwise}.\hfill$\qed$}

\subsection{\texorpdfstring{Proof of \cref{prop:sketched-KW}}{Proof of Proposition 4.2}} \label{app:sketched-KW-proof}

    The proof is essentially the same as \cite[Thm.~2.3]{DEF+23}.
    For conciseness, drop the subscript $\theta$ for the quantities $\hat{\BE}_{\theta}$ and $\hat{\BE}_{\theta,{\rm sk}}$.
    The squared ratio of the Karlson--Wald\'en estimate and its sketched variant is
    \begin{equation*}
        \left(\frac{\hat{\BE}_{\rm sk}(\vec{\hat{x}})}{\hat{\BE}(\vec{\hat{x}})}\right)^2 = \frac{\vec{z}^\top \left( (\mat{S}\mat{A})^\top(\mat{S}\mat{A}) + \alpha \Id \right)^{-1}\vec{z}}{\vec{z}^\top \left( \mat{A}^\top\mat{A} + \alpha \Id \right)^{-1}\vec{z}} \quad \text{with } \alpha = \frac{\theta^2 \norm{\vec{b} - \mat{A}\vec{\hat{x}}}^2}{1 + \theta^2 \norm{\vec{\hat{x}}}^2}, \: \vec{z} = \mat{A}^\top(\vec{b}-\mat{A}\vec{\hat{x}}).
    \end{equation*}
    By the Rayleigh--Ritz principle \cite[Ch.~8]{parlettsym} and some algebra, we have
    \begin{equation*}
        \left(\frac{\hat{\BE}(\vec{\hat{x}})}{\hat{\BE}_{\rm sk}(\vec{\hat{x}})}\right)^2\ge \lambda_{\rm min} \left( \left( \mat{A}^\top\mat{A} + \alpha \Id \right)^{-1/2} \left( (\mat{S}\mat{A})^\top(\mat{S}\mat{A}) + \alpha \Id \right) \left( \mat{A}^\top\mat{A} + \alpha \Id \right)^{-1/2} \right).
    \end{equation*}
    Thus, by the Rayleigh--Ritz principle, we have
    \begin{equation*}
        \left(\frac{\hat{\BE}(\vec{\hat{x}})}{\hat{\BE}_{\rm sk}(\vec{\hat{x}})}\right)^2 \ge \min_{\vec{w} \ne \vec{0}} \frac{\vec{w}^\top \left( (\mat{S}\mat{A})^\top(\mat{S}\mat{A}) + \alpha \Id \right)\vec{w}}{\vec{w}^\top \left( \mat{A}^\top\mat{A} + \alpha \Id \right) \vec{w}} \ge \min \left\{ \min_{\vec{w} \ne \vec{0}} \frac{\norm{\mat{S}\mat{A} \vec{w}}^2}{\norm{\mat{A} \vec{w}}^2}, 1 \right\}.
    \end{equation*}
    The second inequality is a consequence of the numerical inequality $(a+x)/(1+x) \ge \min \{1,a\}$.
    Finally, invoking the \QR decomposition \cref{eq:sketch_qr} and \cref{fact:whitening}, we continue
    \begin{equation*}
        \left(\frac{\hat{\BE}(\vec{\hat{x}})}{\hat{\BE}_{\rm sk}(\vec{\hat{x}})}\right)^2 \ge \min \left\{ \min_{\vec{w} \ne \vec{0}} \frac{\norm{\mat{R} \vec{w}}^2}{\norm{\mat{A} \vec{w}}^2}, 1 \right\} = \min \left\{ \frac{1}{\sigma_{\rm max}^2(\mat{A}\mat{R}^{-1})}, 1\right\} \ge (1-\eta)^2.
    \end{equation*}
    Combining this with \cref{fact:KW-estimate} gives the lower bound.
    The upper bound is proven similarly. \hfill $\qed$

\subsection{\texorpdfstring{\rr{Proof of~\cref{lem:stability-sketching}}}{Proof of Lemma 6.3}}\label{app:stability-sketching-proof}
    By assumption \cref{eq:stable-sketch} and the normalization $\norm{\mat{A}} = 1$, we have
    \begin{equation*}
        \norm{\fl(\mat{S}\mat{A}) - \mat{S}\mat{A}} \lesssim u.
    \end{equation*}
    Thus, by this bound and the backward stability of Householder \QR factorization \cite[Thm.~19.4]{Hig02}, there exists a perturbation $\mat{E}$ of size $\norm{\mat{E}} \lesssim u$ and a matrix $\mat{\tilde{Q}}$ with orthonormal columns such that
    \begin{equation} \label{eq:perturbed-qr}
        \mat{S}\mat{A} + \mat{E} = \mat{\tilde{Q}} \mat{\hat{R}}.
    \end{equation}

    We begin by proving \cref{eq:R-whiten-fp}.
    Throughout, let $\mat{QR} = \mat{SA}$ be the exact QR decomposition of $\mat{SA}$.
    Note that \cref{eq:AR-whiten} implies the bounds
    \begin{equation} \label{eq:R-whiten}
        (1-\eta)\sigma_{\rm min}(\mat{A}) \le \norm{\smash{\mat{R}^{-1}}}^{-1} \le \norm{\mat{R}} \le (1+\eta) \norm{\mat{A}}. 
    \end{equation}
    Applying Weyl's inequality \cite[Sec.~4.3]{Stewart1} to \cref{eq:perturbed-qr}, we obtain
    \begin{equation*}
        \sigma_{\rm min}(\mat{\hat{R}}) =  \sigma_{\rm min}(\mat{\tilde{Q}}\mat{\hat{R}}) \ge \sigma_{\rm min}(\mat{S}\mat{A}) - \norm{\mat{E}} = \sigma_{\rm min}(\mat{R}) - \norm{\mat{E}} \ge \frac{1-\eta}{\kappa} - \norm{\mat{E}} \ge \frac{1}{20\kappa}.
    \end{equation*}
    In the penultimate inequality, we use the fact that $\sigma_{\rm min}(\mat{A}) = 1/\kappa$ and \cref{eq:R-whiten}.
    In the final inequality, we use the fact that $\eta \le 0.9$ and use the standing assumption $\kappa u \ll 1$ to enforce $\norm{\mat{E}} \le (20\kappa)^{-1}$.
    The first inequality of \cref{eq:R-whiten-fp} is proven.
    To obtain the second inequality, we take norms of \cref{eq:perturbed-qr}:
    \begin{equation*}
        \norm{\Rhat} = \norm{\smash{\mat{\tilde{Q}} \Rhat}} \le \norm{\mat{S}\mat{A}} + \norm{\mat{E}} = \norm{\mat{R}} + \norm{\mat{E}}\le (1+\eta) + \norm{\mat{E}} \le 2.
    \end{equation*}
    The penultimate inequality is \cref{eq:R-whiten}.
    In the final inequality, we use the assumption $\eta \le 0.9$ and the standing assumption $u \ll 1$ to enforce $\norm{\mat{E}} \le 0.1$.
    This completes the proof of \cref{eq:R-whiten-fp}.

    Now we prove \cref{eq:AR-whiten-fp}.
    By the embedding property and \cref{eq:perturbed-qr}, we have
    \begin{equation*} 
        \begin{split}
            \norm{\mat{A}\smash{\mat{\hat{R}}}^{-1}} &= \max_{\norm{\vec{u}} = 1} \norm{\mat{A}\smash{\mat{\hat{R}}}^{-1}\vec{u}} \le \frac{1}{1-\eta} \cdot \max_{\norm{\vec{u}} = 1} \norm{\mat{S}\mat{A}\smash{\mat{\hat{R}}}^{-1}\vec{u}} \\
            &= \frac{1}{1-\eta} \cdot \norm{\smash{\mat{\tilde{Q}} - \mat{E}\smash{\mat{\hat{R}}}^{-1}}} \le \frac{1}{1-\eta} + 200\kappa \norm{\mat{E}}.
        \end{split}
    \end{equation*}
    In the last inequality, we use the triangle inequality and \cref{eq:R-whiten-fp}.
    Rearranging gives the second inequality of \cref{eq:AR-whiten-fp}.
    The proof of the first inequality of \cref{eq:AR-whiten-fp} is similar and is omitted. \hfill $\qed$

\subsection{\texorpdfstring{\rr{Proof of~\cref{prop:backward-triangular}}}{Proof of Proposition 6.5}}\label{app:backward-triangular-proof}
The classical statement of the backward stability of triangular solves \cite[sec.~8.1]{Hig02} is that there exist a perturbation $\mat{\Delta R}$ such that
    \begin{equation*}
        (\Rhat + \mat{\Delta R}) (\Rhat^{-1}\vec{z} + \err(\Rhat^{-1}\vec{z})) = \vec{z}, \quad \norm{\mat{\Delta R}} \lesssim \norm{\Rhat}u.
    \end{equation*}
    Subtracting $\vec{z} + \mat{\Delta R}\cdot\Rhat^{-1}\vec{z}$
    from both sides yields
    \begin{equation*}
        (\Rhat + \mat{\Delta R}) \err(\Rhat^{-1}\vec{z}) = - \mat{\Delta R}\cdot \Rhat^{-1}\vec{z}.
    \end{equation*}
    Multiplying by $(\Rhat+\mat{\Delta R})^{-1} = \Rhat^{-1}(\Id + \mat{\Delta R}\cdot\Rhat^{-1})^{-1}$ yields
    \begin{equation*}
        \err(\Rhat^{-1}\vec{z}) = \Rhat^{-1}(\Id + \mat{\Delta R}\cdot\Rhat^{-1})^{-1}\left(- \mat{\Delta R}\cdot  \Rhat^{-1}\vec{z}\right).
    \end{equation*}
    The matrix $\Id + \Rhat^{-1}\cdot \mat{\Delta R}$ is invertible because $\norm{\mat{\Delta R}\cdot \Rhat^{-1}} \lesssim \kappa u \ll 1$ by \cref{lem:stability-sketching}.
    Thus, we are free to assume $\norm{\Rhat^{-1}\cdot \mat{\Delta R}} \le 1/2$ and thus $\norm{(\Id + \Rhat^{-1}\cdot \mat{\Delta R})^{-1}} \le 2$. Therefore, 
    \begin{equation*}
        \norm{(\Id + \mat{\Delta R}\cdot\Rhat^{-1})^{-1}\left(- \mat{\Delta R}\cdot  \Rhat^{-1}\vec{z}\right)} \le 2 \norm{\mat{\Delta R}}\norm{\Rhat^{-1}\vec{z}} \lesssim \norm{\Rhat^{-1}\vec{z}}u.
    \end{equation*}
    Here, we used the bound \cref{eq:R-whiten-fp}.
    Finally, use the bound $\norm{\vec{z}} \le \norm{\Rhat} \norm{\Rhat^{-1}\vec{z}} \lesssim \norm{\Rhat^{-1}\vec{z}}$ to reach the stated result for $\err(\Rhat^{-1}\vec{z})$.
    The result for $\err(\Rhat^{-\top}\vec{z})$ is proven in the same way.

\subsection{\texorpdfstring{Proof of \cref{cor:proving-backward}}{Proof of Corollary 6.8}} \label{app:proving-backward}

Let $\mat{A} = \sum_{i=1}^n \sigma_i^{\vphantom{\top}} \vec{u}_i^{\vphantom{\top}} \vec{v}_i^\top$ be a singular value decomposition.
Multiply \cref{eq:backward-error-from-formula} by $\vec{v}_i^\top$, then use \cref{eq:AR-whiten-fp} and the identity $\vec{v}_i^\top = \sigma_i^{-1} \vec{u}_i^\top \mat{A} = \sigma_i^{-2} \vec{v}_i^\top \mat{A}^\top \mat{A}$.
Finally, appeal to \cref{thm:backward-componentwise}. \hfill $\qed$
    
\section{Prony's method and quantum eigenvalue algorithms} \label{app:prony}

Recently, there has been interest in using frequency estimation algorithms such as Prony's method, the matrix pencil method \cite{KMC+22}, and ESPRIT \cite{LNY23a,DELZ24} to compute eigenvalues with quantum computers.
We will consider the simplest of these, Prony's method, shown in \cref{alg:prony}.
Observe that Prony's method involves two least-squares solves, which could be tackled with randomized methods.
We mention off-handedly that there exist structure-exploiting least-squares solvers for Toeplitz \cite{XXCB14} and Vandermonde \cite{WEB24} least-squares problems with an asympotically faster $\order(m\log^2(n) \log^2(1/u))$ runtime; these algorithms might or might not be faster than randomized methods for small values of $n$.

\begin{algorithm}[t]
	\caption{Prony's method} \label{alg:prony}
	\begin{algorithmic}[1]
		\Require Noisy measurements of the form $f_j = \sum_{i=1}^n \mu_i z_i^j + \varepsilon_j \in \complex$ for $0\le j \le m+n-1$
		\Ensure Vectors of estimates $\vec{\hat{\mu}},\vec{\hat{z}}\in\complex^n$ with $\hat{z}_i \approx z_i$, $\hat{\mu}_i \approx \mu_i$ (up to permutation)
		\State Form $\mat{A} = (f_{n-1+(i-j)})_{1\le i\le m,1\le j\le n}$ and $\vec{b} = (f_{n-1+i})_{1\le i\le m}$
        \State $\vec{p} \gets \Call{LeastSquaresSolver}{\mat{A},\vec{b}}$
        \State $\vec{\hat{z}} \gets \Call{PolynomialRoots}{z^n - p_1 z^{n-1} - p_2 z^{n-2} - \cdots - p_n z^0}$
        \State Form $\mat{A} = (z_j^{i-1})_{1\le i\le m+n,1\le j\le n}$ and $\vec{b} = (f_{i-1})_{1\le i\le m+n}$
        \State $\vec{\hat{\mu}} \gets \Call{LeastSquaresSolver}{\mat{A},\vec{b}}$
	\end{algorithmic}
\end{algorithm}

Prony's method can be used to estimate eigenvalues of a Hermitian matrix $\mat{H}$ as follows.
Suppose we use a quantum computer to collect measurements of the form
\begin{equation*}
    f_j = \vec{x}_0^* \e^{-\mathrm{i} (\Delta t) j \mat{H}} \vec{x}_0^{\vphantom{*}} + \varepsilon_j \quad \text{for } j=0,\ldots,t,
\end{equation*}
    where $\vec{x}_0$ is an appropriately chosen vector and $\varepsilon_j$ are independent random errors.
    By running Prony's method on this data and normalizing the outputs to have unit modulus $\hat{z}_i / |\hat{z}_i| = \e^{-\mathrm{i} (\Delta t) \lambda_i}$, we obtain an approximations $\lambda_i$ to the eigenvalues of $\mat{H}$.

With this context, the least-squares problem shown in the right panel of \cref{fig:dense} is constructed as follows.
We let $\mat{H} = \mat{H}(h)$ be the transverse field Ising mode Hamiltonian on $16$ sites with magnetization $h=1$, choose $\vec{x}_0$ to be the minimum-eigenvalue eigenvector of $\mat{H}(h=0)$, and set the noise $\varepsilon_j$ to be independent complex Gaussians with mean zero and standard deviation $10^{-6}$.
In the right panel of \cref{fig:dense}, we construct $\mat{A}$ and $\vec{b}$ as in line 1 of \cref{alg:prony}; thus, we consider only the \emph{first} least-squares solve of Prony's method.

\section{\texorpdfstring{Distillation of \cref{fact:stable-lanczos} from \cite{MMS18}}{Distillation of Fact 8.1 from [43]}} \label{app:MMS}

In this section, we describe how \cref{fact:stable-lanczos} can be recovered by the analysis in \cite[sec.~6]{MMS18}.
Assume the hypotheses of \cref{fact:stable-lanczos} and assume, without loss of generality, that $\norm{\mat{M}} = \norm{\vec{c}} = 1$.
Denote $\vec{y} = \mat{M}^{-1}\vec{c}$ and let $\fl(\vec{y})$ denote the output of Lanczos run for $k$ steps.
By \cite[Lem.~6.3]{MMS18} (with $\eta = \lambda_{\rm min}(\mat{M})$) and the proof of \cite[Thm.~6.2]{MMS18}, we have that
\begin{equation*}
    \norm{\fl(\vec{y}) - \vec{y}} \lesssim \left[k \min_{\deg p \le k-1} \max_{x \in [\lambda_{\rm min}(\mat{M})/2,1+\lambda_{\rm min}(\mat{M})/2]} |p(x) - x^{-1}| + \frac{k^4}{\lambda_{\rm min}(\mat{M})^2} \tilde{u} \right].
\end{equation*}
The minimum is taken over polynomials of degree $k-1$.
Note that, since $\cond(\mat{M}) \le 40$, we must have $\lambda_{\rm min}(\mat{M}) \ge 1/40$.
Ergo,
\begin{equation*}
    \norm{\fl(\vec{y}) - \vec{y}} \lesssim \left[k \min_{\deg p \le k-1} \max_{x \in [1/80,81/80]} |p(x) - x^{-1}| + k^4 \tilde{u} \right].
\end{equation*}
Berntein's theorem for polynomial approximation of analytic functions\cite[Thm.~8.2]{Tre19} implies
\begin{equation*}
    \min_{\deg p \le k-1} \max_{x \in [1/80,81/80]} |p(x) - x^{-1}| \le \tilde{u} \quad \text{for } k = \order(\log(1/\tilde{u})).
\end{equation*}
Thus, for $k = \order(\log(1/\tilde{u}))$.
\begin{equation*}
    \norm{\fl(\vec{y}) - \vec{y}} \lesssim \tilde{u} \log^4(1/\tilde{u}) \lesssim \tilde{u}.
\end{equation*}
In the last line, we use the hypothesis $\log(1/\tilde{u}) \le n$ to suppress the polylogarithmic factors in $1/\tilde{u}$.

We mention offhandedly that \cite[Thm.~2.2]{DGK98} gives an analysis of Lanczos linear solves with a smaller polylogarithmic factor $\log^{0.5}(1/\tilde{u})$, but this result does not treat the floating point errors incurred on line 11 of \cref{alg:lanczos}.
Since we only need the qualitative conclusion \cref{eq:lanczos-stability} for present purposes, we use the end-to-end but less sharp analysis of \cite{MMS18} instead of \cite[Thm.~2.2]{DGK98}.

\end{document}